\patchcmd{\@maketitle}{\LARGE \@title}{\LARGE\bfseries\@title}{}{}
\renewcommand{\@seccntformat}[1]{\csname the#1\endcsname.\quad}
\definecolor{darkblue}{rgb}{0,0,.5}
\definecolor{myblue}{rgb}{.9, .9, 1}
\def\th@plain{%
	\thm@notefont{}
	\itshape 
}
\def\th@definition{%
	\thm@notefont{}
	\normalfont 
}
\renewenvironment{proof}[1][\proofname]{\par
	\normalfont
	\topsep0\p@\@plus3\p@ \trivlist
	\item[\hskip\labelsep\itshape
	#1\@addpunct{.}]\ignorespaces
}{%
	\qed\endtrivlist
}
\newtheorem{theorem}{Theorem}[section]
\newtheorem{lemma}[theorem]{Lemma}
\newtheorem{corollary}[theorem]{Corollary}
\newtheorem{proposition}[theorem]{Proposition}
\newtheorem{fact}[theorem]{Fact}
\theoremstyle{definition}
\newtheorem{definition}[theorem]{Definition}
\theoremstyle{definition}
\theoremstyle{definition}
\newtheorem{remark}[theorem]{Remark}
\setlist[enumerate]{nosep}
\newcommand{\menge}[2]{\{{#1}~\big |~{#2}\}} 
\newcommand{\Menge}[2]{\left\{{#1}~\Big|~{#2}\right\}}
\newcommand{\scal}[2]{\left\langle {#1},{#2} \right\rangle}
\newcommand{\NN}{\ensuremath{\mathbb N}}
\newcommand{\nnn}{\ensuremath{{n\in{\mathbb N}}}}
\newcommand{\RR}{\ensuremath{\mathbb R}}
\newcommand{\RX}{\ensuremath{\left]-\infty,+\infty\right]}}
\newcommand{\RP}{\ensuremath{\mathbb{R}_+}}
\newcommand{\RPP}{\ensuremath{\mathbb{R}_{++}}}
\newcommand{\argmin}{\ensuremath{\operatorname*{argmin}}}
\newcommand{\ran}{\ensuremath{\operatorname{ran}}}
\newcommand{\zer}{\ensuremath{\operatorname{zer}}}
\newcommand{\dom}{\ensuremath{\operatorname{dom}}}
\newcommand{\gra}{\ensuremath{\operatorname{gra}}}
\newcommand{\Fix}{\ensuremath{\operatorname{Fix}}}
\newcommand{\Id}{\ensuremath{\operatorname{Id}}}
\newcommand{\prox}{\ensuremath{\operatorname{Prox}}}
\begin{document}

\title{Adaptive Douglas--Rachford Splitting Algorithm\\
for the Sum of Two Operators}

\author{
Minh N.\ Dao\thanks{CARMA, University of Newcastle, Callaghan, NSW 2308, Australia. 
E-mail: \texttt{daonminh@gmail.com}}
~and~
Hung M.\ Phan\thanks{Department of Mathematical Sciences, Kennedy College of Sciences, University of Massachusetts Lowell, Lowell, MA 01854, USA.
E-mail: \texttt{hung\char`_phan@uml.edu}.}}

\date{October 9, 2019}

\maketitle

\begin{abstract}
The Douglas--Rachford algorithm is a classical and powerful splitting method for minimizing the sum of two convex functions and, more generally, finding a zero of the sum of two maximally monotone operators. Although this algorithm is well understood when the involved operators are monotone or strongly monotone, the convergence theory for weakly monotone settings is far from being complete. In this paper, we propose an adaptive Douglas--Rachford splitting algorithm for the sum of two operators, one of which is strongly monotone while the other one is weakly monotone. With appropriately chosen parameters, the algorithm converges globally to a fixed point from which we derive a solution of the problem. When one operator is Lipschitz continuous, we prove global linear convergence, which sharpens recent known results.
\end{abstract}

{\small
\noindent{\bfseries AMS Subject Classifications:}
{Primary: 
47H10, 
49M27; 
Secondary: 
41A25, 
65K05, 
65K10, 
}

\noindent{\bfseries Keywords:}
Douglas--Rachford algorithm, 
Fej\'er monotonicity, 
global convergence, 
inclusion problem,
linear convergence,
Lipschitz continuity,
strong monotonicity,
weak monotonicity.
}

\section{Introduction}

Inclusion problems that involve finding a zero of the sum of two set-valued operators play an important role in various areas of variational analysis and optimization. For instance, under some constraint qualifications, the classical optimization problem of minimizing the sum of two convex functions can be converted to the problem of finding a zero of the sum of subdifferential operators of these functions.
One popular approach for the sum of two maximally monotone operators is to employ the Douglas--Rachford (DR) algorithm. This algorithm was originally introduced in 1956 by Douglas and Rachford \cite{DR56} to numerically solve a system of linear equations arising in heat conduction. In 1979, Lions and Mercier made the algorithm applicable to a broad class of optimization problems through the seminal work \cite{LM79}. More specifically, they proved that each sequence generated by the DR algorithm converges weakly to a fixed point which is then used to derive a solution of the original problem. This result was later strengthened by Svaiter \cite{Sva11} in which weak convergence of the \emph{shadow} sequence to a solution was shown.
In the formulation of the DR algorithm, each step involves computing the resolvent of a single operator, and hence, it is often referred to as a {\em splitting} algorithm. Since mathematical structures emerging from applications are usually complex and difficult to analyze as a whole object, the idea of splitting is extremely important as it helps the calculation on simple components that make up the entire mathematical model. It is worth mentioning (see, e.g., \cite{EB92}) that several splitting methods such as the \emph{method of partial inverses} \cite{Spi83} and the \emph{alternating direction method of multipliers} (ADMM) \cite{GM76} can be written in the form of the DR algorithm, which itself can be transformed into the \emph{proximal point algorithm} \cite{Roc76}. Other splitting schemes can be found in \cite{BCH13,BriCom11,Com09} and the references therein. 

When applied to two normal cone operators, the DR algorithm can be used to solve the \emph{feasibility problem} of finding a common point of two sets. In this context, the DR algorithm possesses many good properties; for example, it finds a best approximation point when the intersection of sets is empty \cite{BCL04,BDM16,BM17}, it finds an exact solution after only a finite number of iterations under verifiable conditions \cite{ABT16,BD17,BDNP16b}, and it converges globally in some nonconvex settings \cite{Ben15,DT18} while it converges locally with linear or sublinear rate under some regularity assumptions \cite{BLT17, HL13, Phan16}. In the absence of constraint qualifications, \cite{BDNP16a} suggests that the DR algorithm outperforms the well-known \emph{method of alternating projections}.
In attempting to generalize the DR algorithm for feasibility problems, several parameters were added to its formulation \cite{BST15, DP18mor, DP18jogo, FG17}. In this case, one has the freedom to modify the parameters that are associated with the projections without giving up the solution. This approach is possible because the underlying normal cone operators have homogeneous values, which allows for scaling them independently. The situation changes completely when working with general problems where two involved operators may no longer have such homogeneity. In this case, a naive scaling may destroy the ability to solve the original problem. Therefore, we aim to overcome this hurdle by proposing an adaptive approach.

The paper is devoted to the convergence analysis of the {\em adaptive DR algorithm} for finding a zero of the sum of $\alpha$- and $\beta$-monotone operators,
in which $\alpha$-monotonicity is a unification of strong and weak monotonicity (see Definition~\ref{d:alpha_mono}). This situation arises in various important applications; see \cite{GHY17} for a brief discussion. The main contributions are summarized below.

{\bf (R1)} We incorporate parameters into the DR algorithm so that the weak convergence to some fixed point is achieved (see Theorem~\ref{t:DRcvg}). The chosen parameters then allow us to derive a solution to the original problem by using the shadow of the fixed point. In addition, the shadow sequences converge strongly to the solution whenever the strong monotonicity strictly outweighs the weak counterpart. We show by a simple proof that the rate of asymptotic regularity of the adaptive DR operator is $o(1/\sqrt{n})$. As expected, these results are also valid for the classical DR algorithm.

{\bf (R2)} Under Lipschitz continuity assumption, we prove that the convergence is strong with linear rate (see Theorems~\ref{t:linear} and \ref{t:linearB}) and that our linear rate refines previous results (see Corollary~\ref{c:classicalDR} and Remark~\ref{r:L_improved}). We note a particular result in Theorem~\ref{t:linear}\ref{t:linear_reduce} that when one operator is Lipschitz continuous and the other operator is strongly monotone, the adaptive DR algorithm converges linearly as long as the strong monotonicity constant is greater than the Lipschitz constant. This is interesting since no monotonicity assumption is imposed on the Lipschitz operator!

To the best of our knowledge, the results are {\em new} and encompass several contemporary works in this direction. Indeed, our results provide a consolidation for the classical DR algorithm and its adaptive version. In particular, we show how the parameters play a role in the convergence analysis of the algorithm.

The remainder of the paper is organized as follows. Section~\ref{s:prelim} supplies definitions and facts that are  necessary for our analysis. In Section~\ref{s:relaxed_resol}, we define and study various relevant properties of $\alpha$-monotone operators with and without Lipschitz assumptions. The main results for the adaptive DR algorithm and its convergence analysis are presented in Section~\ref{s:aDR}. Section~\ref{s:applications} contains some applications to structured minimization problems. Finally, concluding remarks and comments are given in Section~\ref{s:concl}.

\section{Preliminaries}
\label{s:prelim}

Throughout this work, $X$ is a real Hilbert space with inner product $\scal{\cdot}{\cdot}$ and induced norm $\|\cdot\|$. 
The set of nonnegative integers is denoted by $\NN$, the set of real numbers by $\RR$, the set of nonnegative real numbers by $\RP := \menge{x \in \RR}{x \geq 0}$, and the set of the positive real numbers by $\RPP := \menge{x \in \RR}{x >0}$.
We use the notation $A\colon X\rightrightarrows X$ to indicate that $A$ is a set-valued operator on $X$ and the notation $A\colon X\to X$ to indicate that $A$ is a single-valued operator on $X$. 

Let $A$ be an operator on $X$. The \emph{domain} of $A$ is $\dom A :=\menge{x\in X}{Ax\neq \varnothing}$, the \emph{graph} of $A$ is $\gra A :=\menge{(x,u)\in X\times X}{u\in Ax}$, and the set of \emph{fixed points} of $A$ is $\Fix A :=\menge{x\in X}{x\in Ax}$. The \emph{inverse} of $A$, denoted by $A^{-1}$, is the operator with graph $\gra A^{-1} :=\menge{(u,x)\in X\times X}{u\in Ax}$.
We say that $A$ is \emph{Lipschitz continuous} with constant $\ell\in \RP$ if it is single-valued and
\begin{equation}
\forall x, y\in \dom A,\quad \|Ax-Ay\|\leq \ell\|x-y\|.
\end{equation}
The operator $A$ is \emph{nonexpansive} if it is Lipschitz continuous with constant $1$, i.e.,
\begin{equation}
\forall x, y\in \dom A,\quad \|Ax-Ay\|\leq \|x-y\|.
\end{equation}
An operator $A\colon X\rightrightarrows X$ is said to be \emph{monotone} if
\begin{equation}
\forall (x, u), (y, v)\in \gra A,\quad \scal{x-y}{u-v}\geq 0,
\end{equation}
and said to be \emph{maximally monotone} if it is monotone and there exists no monotone operator $B\colon X\rightrightarrows X$ such that $\gra B$ properly contains $\gra A$. The \emph{resolvent} of $A\colon X\rightrightarrows X$ is defined by
\begin{equation}
J_A:= (\Id+ A)^{-1},
\end{equation}
where $\Id$ is the identity operator. The \emph{relaxed resolvent} of $A$ with parameter $\lambda\in\RP$ is defined by
\begin{equation}
J_A^\lambda:= (1-\lambda)\Id+ \lambda J_A.
\end{equation}
Next, we recall an important characterization of maximally monotone operators.
\begin{fact}
\label{f:Minty}
Let $A\colon X\rightrightarrows X$ be monotone and let $\gamma\in \RPP$. Then $\dom J_{\gamma A} =X$ if and only if $A$ is maximally monotone.
\end{fact}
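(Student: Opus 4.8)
The plan is to translate the statement into a surjectivity assertion and then split into the two implications. Since $J_{\gamma A}=(\Id+\gamma A)^{-1}$, a point $y$ belongs to $\dom J_{\gamma A}$ exactly when $y\in(\Id+\gamma A)x=x+\gamma Ax$ for some $x$; hence $\dom J_{\gamma A}=\ran(\Id+\gamma A)$, and the claim is equivalent to
\[
\ran(\Id+\gamma A)=X\iff A\text{ is maximally monotone.}
\]
Before treating the two directions I would record the standard single-valuedness observation that follows from monotonicity alone: if $x_1,x_2\in J_{\gamma A}(y)$, then $\gamma^{-1}(y-x_i)\in Ax_i$ for $i=1,2$, and monotonicity yields $\scal{x_1-x_2}{(y-x_1)-(y-x_2)}\ge0$, i.e.\ $\|x_1-x_2\|^2\le0$. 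Thus $J_{\gamma A}$ is automatically a single-valued map on $\dom J_{\gamma A}$.

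For the implication ``$\ran(\Id+\gamma A)=X\Rightarrow A$ maximally monotone'' I would argue by contradiction. If $A$ fails to be maximal, there is a pair $(x_0,u_0)\in(X\times X)\setminus\gra A$ that is monotonically related to all of $\gra A$, i.e.\ $\scal{x-x_0}{u-u_0}\ge0$ for every $(x,u)\in\gra A$. Using surjectivity, choose $x_1$ with $x_0+\gamma u_0\in x_1+\gamma Ax_1$ and set $u_1:=\gamma^{-1}(x_0+\gamma u_0-x_1)\in Ax_1$, so that $u_1-u_0=\gamma^{-1}(x_0-x_1)$. Substituting $(x,u)=(x_1,u_1)$ into the monotone-relation inequality gives $-\gamma^{-1}\|x_1-x_0\|^2\ge0$, forcing $x_1=x_0$ and hence $u_1=u_0$. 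But then $(x_0,u_0)=(x_1,u_1)\in\gra A$, a contradiction. This direction is short and uses only the definitions.

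The reverse implication ``$A$ maximally monotone $\Rightarrow\ran(\Id+\gamma A)=X$'' is the Minty surjectivity theorem and is the main obstacle. First I would reduce to $\gamma=1$: for $\gamma\in\RPP$ the operator $B:=\gamma A$ is again maximally monotone, and $\ran(\Id+\gamma A)=\ran(\Id+B)$, so it suffices to show that a maximally monotone $B$ satisfies $\ran(\Id+B)=X$. Fixing a target $y\in X$, solving $y\in x+Bx$ is the same as finding $x$ with $(x,y-x)\in\gra B$. I would exploit the Fitzpatrick function
\[
F_B(x,u):=\sup_{(a,b)\in\gra B}\big(\scal{x}{b}+\scal{a}{u}-\scal{a}{b}\big),
\]
which is proper, lower semicontinuous, convex, and satisfies $F_B(x,u)\ge\scal{x}{u}$ with equality precisely on $\gra B$. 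Setting $g(x):=F_B(x,y-x)+\|x\|^2-\scal{x}{y}$ produces a convex function with $g\ge0$ whose zeros are exactly the solutions of $(x,y-x)\in\gra B$; the crux is then to prove that $g$ attains the value $0$, which I would establish through a coercivity/Fenchel-duality argument showing $\inf g=0$ together with attainment. The delicate point here---and the reason this half is genuinely harder---is handling the interaction between $F_B$ and the quadratic term to secure both $\inf g=0$ and existence of a minimizer; alternatively one may simply invoke the classical Minty theorem as stated, e.g.\ in the monotone-operator literature. Combining the two implications yields the asserted equivalence.
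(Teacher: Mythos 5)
Your proposal is correct, but it is organized differently from the paper's proof. The paper disposes of the entire equivalence by citation: it notes $\dom J_{\gamma A}=\ran(\Id+\gamma A)$, invokes Minty's theorem in the two-sided form (a monotone operator $\gamma A$ is maximally monotone if and only if $\ran(\Id+\gamma A)=X$, \cite[Theorem~21.1]{BC17}), and then transfers maximal monotonicity between $\gamma A$ and $A$ via \cite[Proposition~20.22]{BC17}. You instead split the equivalence: for the implication $\ran(\Id+\gamma A)=X\Rightarrow A$ maximally monotone you give a complete, elementary argument (take a monotonically related pair $(x_0,u_0)$ outside $\gra A$, solve $x_0+\gamma u_0\in x_1+\gamma Ax_1$, and force $x_1=x_0$, $u_1=u_0$), which the paper does not do; this half of your proof is self-contained and correct. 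For the converse you reduce to $\gamma=1$ --- note that your unproved assertion that $\gamma A$ inherits maximal monotonicity is exactly the scaling fact the paper cites, so it deserves at least a one-line verification --- and then outline the Fitzpatrick-function route to Minty's theorem. As written that outline is not a proof: establishing that $\inf g=0$ and that the infimum is attained \emph{is} Minty's theorem, and the coercivity/Fenchel-duality step you defer is where all the difficulty lives. However, since you explicitly allow falling back on citing the classical Minty theorem, your treatment of the hard direction lands at the same level of rigor as the paper's. The net trade-off: your version is longer but makes one direction genuinely elementary; the paper's version is shorter and rests entirely on the textbook references.
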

\begin{proof}
By definition, $\dom J_{\gamma A} =\ran(\Id+\gamma A) :=(\Id+\gamma A)(X)$. Since $\gamma\in \RPP$, it holds that $\gamma A$ is monotone. According to Minty's theorem (see, e.g., \cite[Theorem~21.1]{BC17}), $\dom J_{\gamma A} =\ran(\Id+\gamma A) =X$ if and only if $\gamma A$ is maximally monotone. By \cite[Proposition~20.22]{BC17}, the latter occurs if and only if $A$ is maximally monotone. 
\end{proof}

We conclude this section with the following useful identity whose omitted proof is straightforward. For all $s, t\in X$ and all $\sigma, \tau\in \RR$,
\begin{equation}
\label{e:identity}
\|\sigma s+\tau t\|^2 =\sigma(\sigma+\tau)\|s\|^2+\tau(\sigma+\tau)\|t\|^2 -\sigma\tau\|s-t\|^2,
\end{equation}
which is equivalent to 
\begin{equation}
\label{e:identity'}
\sigma\|s\|^2+\tau\|t\|^2 =\frac{\sigma\tau}{\sigma+\tau}\|s-t\|^2 +\frac{1}{\sigma+\tau}\|\sigma s+\tau t\|^2
\end{equation}
whenever $\sigma+\tau\neq 0$.

\section{Relaxed resolvents of $\alpha$-monotone operators}
\label{s:relaxed_resol}

\begin{definition}[$\alpha$-monotonicity]
\label{d:alpha_mono}
An operator $A\colon X\rightrightarrows X$ is said to be \emph{$\alpha$-monotone} ($\alpha\in \RR$) if
\begin{equation}
\label{e:alpha-mono}
\forall (x,u), (y,v)\in \gra A,\quad \scal{x-y}{u-v}\geq \alpha\|x-y\|^2.
\end{equation}
The constant $\alpha$ is referred to as the \emph{monotonicity constant}.
We also say that $A$ is \emph{maximally $\alpha$-monotone} if it is $\alpha$-monotone and there is no $\alpha$-monotone operator whose graph strictly contains $\gra A$.  
\end{definition}

We note that $0$-monotonicity simply means monotonicity, that if $\alpha>0$, then $\alpha$-monotonicity is precisely the notion of \emph{strong monotonicity} \cite[Definition~22.1(iv)]{BC17}, and that if $\alpha<0$, then $\alpha$-monotonicity can be referred to as \emph{weak monotonicity}.
For detailed discussions on maximal monotonicity and its variants as well as the connection to optimization problems, we refer the reader to \cite{BC17, Bor10, BI08}.

\begin{lemma}[monotonicity versus $\alpha$-monotonicity]
\label{l:transfer}
Let $A\colon X\rightrightarrows X$ and let $\alpha, \beta\in \RR$. Then the following hold:
\begin{enumerate}
\item\label{l:transfer_mono} 
$A$ is $\alpha$-monotone if and only if $A-\beta\Id$ is $(\alpha-\beta)$-monotone.
\item\label{l:transfer_maxmono}
$A$ is maximally $\alpha$-monotone if and only if $A-\beta\Id$ is maximally $(\alpha-\beta)$-monotone. 
\end{enumerate}
Consequently, $A$ is (resp., maximally) $\alpha$-monotone if and only if $A-\alpha\Id$ is (resp., maximally) monotone.
\end{lemma}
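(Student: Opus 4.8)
The plan is to reduce everything to the elementary change of variables in the graph induced by subtracting $\beta\Id$. I would define the map $\Phi\colon X\times X\to X\times X$ by $\Phi(x,u):=(x,u-\beta x)$. This is a linear bijection, with inverse $(x,w)\mapsto(x,w+\beta x)$, and a one-line check shows that for \emph{every} operator $B\colon X\rightrightarrows X$ one has $\Phi(\gra B)=\gra(B-\beta\Id)$: indeed $u\in Bx$ if and only if $u-\beta x\in(B-\beta\Id)x$. In particular $\Phi$ restricts to a bijection between $\gra A$ and $\gra(A-\beta\Id)$.

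For part \ref{l:transfer_mono}, I would take arbitrary $(x,u),(y,v)\in\gra A$ with images $(x,u-\beta x),(y,v-\beta y)\in\gra(A-\beta\Id)$ and expand
\[
\scal{x-y}{(u-\beta x)-(v-\beta y)}=\scal{x-y}{u-v}-\beta\|x-y\|^2.
\]
Thus the defining inequality $\scal{x-y}{(u-\beta x)-(v-\beta y)}\geq(\alpha-\beta)\|x-y\|^2$ for $A-\beta\Id$ is equivalent, after cancelling $-\beta\|x-y\|^2$ from both sides, to $\scal{x-y}{u-v}\geq\alpha\|x-y\|^2$ for $A$. Because $\Phi$ is a bijection between the two graphs, quantifying over all pairs in $\gra(A-\beta\Id)$ is the same as quantifying over all pairs in $\gra A$, which yields the stated equivalence.

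For part \ref{l:transfer_maxmono}, the key point is that $\Phi$ is a bijection of the \emph{whole} ambient space $X\times X$, hence inclusion-preserving: for any operators $B,C$, one has $\gra B\subseteq\gra C$ (resp.\ $\subsetneq$) if and only if $\gra(B-\beta\Id)\subseteq\gra(C-\beta\Id)$ (resp.\ $\subsetneq$). Combining this with part \ref{l:transfer_mono}, which pairs $\alpha$-monotone operators with $(\alpha-\beta)$-monotone ones, I would argue by contraposition: if $A$ is maximally $\alpha$-monotone but some $(\alpha-\beta)$-monotone $C$ satisfied $\gra(A-\beta\Id)\subsetneq\gra C$, then $B:=C+\beta\Id$ would be $\alpha$-monotone (by part \ref{l:transfer_mono}) with $\gra A\subsetneq\gra B$ (by the inclusion-preserving property), contradicting maximality of $A$. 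The reverse implication is symmetric, obtained by applying the same argument with $\beta$ replaced by $-\beta$ and $A$ by $A-\beta\Id$. The concluding ``Consequently'' statement is then just the special case $\beta=\alpha$, since $(\alpha-\alpha)$-monotonicity is exactly $0$-monotonicity, i.e.\ monotonicity.

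The only genuinely delicate point I anticipate is in part \ref{l:transfer_maxmono}: one must ensure that \emph{every} candidate enlargement of $\gra(A-\beta\Id)$ by an $(\alpha-\beta)$-monotone operator corresponds, under $\Phi^{-1}$, to an enlargement of $\gra A$ by an $\alpha$-monotone operator (and conversely). This is precisely why it matters that $\Phi$ is a bijection of all of $X\times X$ rather than merely between the two graphs; granting that, maximality transfers formally and no further estimates are needed.
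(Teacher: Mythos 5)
Your proposal is correct and follows essentially the same route as the paper's proof: part \ref{l:transfer_mono} via the graph correspondence $(x,u)\mapsto(x,u-\beta x)$ and the inner-product expansion, and part \ref{l:transfer_maxmono} via the same contradiction argument (an $(\alpha-\beta)$-monotone enlargement of $\gra(A-\beta\Id)$ would shift back to an $\alpha$-monotone enlargement of $\gra A$), with the reverse direction obtained by the symmetric substitution. Your explicit bijection $\Phi$ merely formalizes what the paper leaves implicit, so the two arguments are the same in substance.
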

\begin{proof}
\ref{l:transfer_mono}: We first have the equivalences
\begin{subequations}
\begin{align}
(x,u)\in \gra A &\iff (x,u-\beta x)\in \gra(A-\beta\Id), \\
(y,v)\in \gra A &\iff (y,v-\beta y)\in \gra(A-\beta\Id),
\end{align}
\end{subequations}
and
\begin{equation}
\scal{x-y}{u-v}\geq \alpha\|x-y\|^2 \iff \scal{x-y}{(u-\beta x)-(v-\beta y)}\geq (\alpha-\beta)\|x-y\|^2,
\end{equation}
from which the conclusion follows.

\ref{l:transfer_maxmono}: Assume that $A$ is maximally $\alpha$-monotone. By \ref{l:transfer_mono}, $A-\beta\Id$ is $(\alpha-\beta)$-monotone. Now, suppose that $A-\beta\Id$ is not maximally $(\alpha-\beta)$-monotone. Then there must exist $B'\colon X\rightrightarrows X$ such that $B'$ is $(\alpha-\beta)$-monotone and $\gra(A-\beta\Id)\subsetneq \gra B'$. It follows that $B :=B'+\beta\Id$ is $\alpha$-monotone due to \ref{l:transfer_mono} and that $\gra A\subsetneq \gra B$, which contradict the maximal $\alpha$-monotonicity of $A$. We deduce that if $A$ is maximally $\alpha$-monotone, then $A-\beta\Id$ is maximally $(\alpha-\beta)$-monotone. This also implies that if $A-\beta\Id$ is maximally $(\alpha-\beta)$-monotone, then $A =(A-\beta\Id)+\beta\Id$ is maximally $\alpha$-monotone, and we are done. 
\end{proof}

\begin{lemma}[resolvents of $\alpha$-monotone operators]
\label{l:resol}
Let $A\colon X\rightrightarrows X$ be $\alpha$-monotone and let $\gamma\in \RPP$. Then the following hold:
\begin{enumerate}
\item\label{l:resol_gen}
For all $(x,a), (y,b)\in \gra J_{\gamma A}$,
\begin{subequations}
\begin{align}
\label{e:resol_gen}
\scal{x-y}{a-b} &\geq (1+\gamma\alpha)\|a-b\|^2 \quad\text{and}\\
\|x-y\| &\geq (1+\gamma\alpha)\|a-b\|.
\end{align} 
\end{subequations}
\item\label{l:resol_single}
If $J_{\gamma A}$ is single-valued, then, for all $x,y\in \dom J_{\gamma A}$,
\begin{equation}
\scal{x-y}{J_{\gamma A}x-J_{\gamma A}y}\geq (1+\gamma\alpha)\|J_{\gamma A}x-J_{\gamma A}y\|^2,
\end{equation}
i.e., $J_{\gamma A}$ is $(1+\gamma\alpha)$-cocoercive.
\end{enumerate}
\end{lemma}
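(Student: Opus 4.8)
The plan is to reduce everything to the defining equivalence of the resolvent and then invoke $\alpha$-monotonicity directly. First I would record the key translation: since $J_{\gamma A}=(\Id+\gamma A)^{-1}$, a pair $(x,a)$ lies in $\gra J_{\gamma A}$ precisely when $x\in a+\gamma Aa$, that is, when $\bigl(a,\gamma^{-1}(x-a)\bigr)\in\gra A$. Applying this to both $(x,a)$ and $(y,b)$ produces two points $\bigl(a,\gamma^{-1}(x-a)\bigr)$ and $\bigl(b,\gamma^{-1}(y-b)\bigr)$ of $\gra A$, which is exactly the input needed to bring the $\alpha$-monotonicity hypothesis into play.

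For part \ref{l:resol_gen}, I would feed these two points into the $\alpha$-monotonicity inequality \eqref{e:alpha-mono} and multiply through by $\gamma\in\RPP$, which gives $\scal{a-b}{(x-y)-(a-b)}\geq \gamma\alpha\|a-b\|^2$. Expanding the left-hand side and using the symmetry of the inner product yields the first inequality $\scal{x-y}{a-b}\geq(1+\gamma\alpha)\|a-b\|^2$. The second inequality then follows from Cauchy--Schwarz: combining $\|x-y\|\,\|a-b\|\geq\scal{x-y}{a-b}$ with the first inequality and cancelling one factor of $\|a-b\|$ gives $\|x-y\|\geq(1+\gamma\alpha)\|a-b\|$. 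The only point needing a word of care is the degenerate case $a=b$, where the cancellation is not permitted; there, however, the right-hand side vanishes while the left-hand side is nonnegative, so the inequality holds trivially.

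Part \ref{l:resol_single} requires no new work: when $J_{\gamma A}$ is single-valued I may take $a=J_{\gamma A}x$ and $b=J_{\gamma A}y$, so the first inequality of \ref{l:resol_gen} is precisely the asserted $(1+\gamma\alpha)$-cocoercivity. Honestly, there is no serious obstacle in this lemma; the entire argument is a one-line transfer through the resolvent followed by a rescaling by $\gamma$. The subtlety is limited to keeping careful track of the constant $1+\gamma\alpha$ (in particular it may be negative when $\alpha<0$, but this never disrupts any step, since each inequality is derived before any sign assumption is used) and to the trivial $a=b$ case in the norm estimate.
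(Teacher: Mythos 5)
Your proof is correct and follows essentially the same route as the paper: unpack $(x,a)\in\gra J_{\gamma A}$ into a point of $\gra A$ (your $\gamma^{-1}(x-a)$ is exactly the paper's $u\in Aa$ with $x=a+\gamma u$), apply $\alpha$-monotonicity, then use Cauchy--Schwarz with the same trivial-case remark when $a=b$. Part \ref{l:resol_single} is likewise handled identically, as an immediate specialization of \ref{l:resol_gen}.
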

\begin{proof}
\ref{l:resol_gen}: Let $(x,a), (y,b)\in \gra J_{\gamma A}$. Then $x\in (\Id+\gamma A)a$, $y\in (\Id+\gamma A)b$, and so $x =a+\gamma u$, $y =b+\gamma v$ for some $u\in Aa$, $v\in Ab$. We derive from the $\alpha$-monotonicity of $A$ that 
\begin{subequations}
\begin{align}
\scal{x-y}{a-b}
&=\scal{(a+\gamma u)-(b+\gamma v)}{a-b}\\
&=\|a-b\|^2+\gamma\scal{a-b}{u-v}\\
&\geq \|a-b\|^2+\gamma\alpha\|a-b\|^2\\
&=(1+\gamma\alpha)\|a-b\|^2.
\end{align}
\end{subequations}
Now, by the Cauchy--Schwarz inequality,
\begin{equation}
\|x-y\|\|a-b\|\geq \scal{x-y}{a-b}\geq (1+\gamma\alpha)\|a-b\|^2.
\end{equation}
which gives $\|x-y\|\geq (1+\gamma\alpha)\|a-b\|$ while noting that this is trivial when $a =b$. 

\ref{l:resol_single}: This is a direct consequence of \ref{l:resol_gen}.
\end{proof}

\begin{proposition}[single-valuedness and full domain]
\label{p:resol}
Let $A\colon X\rightrightarrows X$ be $\alpha$-monotone and let $\gamma\in \RPP$ such that $1+\gamma\alpha >0$. Then the following hold:
\begin{enumerate}
\item\label{p:resol_single} 
$J_{\gamma A}$ is single-valued.
\item\label{p:resol_dom} 
$\dom J_{\gamma A} =X$ if and only if $A$ is maximally $\alpha$-monotone.
\end{enumerate}
\end{proposition}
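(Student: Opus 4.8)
The plan is to handle the two parts separately, using the cocoercivity-type estimate from Lemma~\ref{l:resol} for the first and a reduction to the monotone case for the second.

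For part~\ref{p:resol_single}, I would argue directly from the second inequality in Lemma~\ref{l:resol}\ref{l:resol_gen}. Suppose $(x,a)$ and $(x,b)$ both lie in $\gra J_{\gamma A}$; applying that inequality with $y=x$ gives $0 =\|x-x\|\geq (1+\gamma\alpha)\|a-b\|$. Since $1+\gamma\alpha>0$ by hypothesis, this forces $a=b$, so every point of $\dom J_{\gamma A}$ has a unique image and $J_{\gamma A}$ is single-valued.

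For part~\ref{p:resol_dom}, the idea is to transfer the question to the monotone operator $B :=A-\alpha\Id$ and invoke Minty's characterization (Fact~\ref{f:Minty}). By Lemma~\ref{l:transfer}, $A$ is $\alpha$-monotone if and only if $B$ is monotone, and $A$ is maximally $\alpha$-monotone if and only if $B$ is maximally monotone. The key computation is the operator identity
\begin{equation*}
\Id+\gamma A =(1+\gamma\alpha)\Id+\gamma(A-\alpha\Id) =(1+\gamma\alpha)\left(\Id+\tfrac{\gamma}{1+\gamma\alpha}B\right),
\end{equation*}
which one checks holds pointwise as set-valued operators and rests only on $B=A-\alpha\Id$. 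Since $\dom J_{\gamma A} =\ran(\Id+\gamma A)$ and the scalar $1+\gamma\alpha$ is strictly positive, multiplication by this factor is a linear bijection of $X$; hence $\dom J_{\gamma A} =X$ if and only if $\ran(\Id+\gamma' B) =X$, where $\gamma' :=\gamma/(1+\gamma\alpha)\in\RPP$.

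Finally, I would apply Fact~\ref{f:Minty} to the monotone operator $B$ with parameter $\gamma'\in\RPP$, which gives $\ran(\Id+\gamma' B) =\dom J_{\gamma' B} =X$ if and only if $B$ is maximally monotone. Chaining the equivalences yields $\dom J_{\gamma A} =X$ iff $B$ is maximally monotone iff $A$ is maximally $\alpha$-monotone, completing the proof. The only point requiring care — and thus the main obstacle — is the rescaling step: one must verify that factoring out the positive scalar $1+\gamma\alpha$ legitimately converts the range of $\Id+\gamma A$ into the range of a genuine resolvent $\Id+\gamma' B$ of the monotone operator $B$ with a \emph{positive} parameter, so that Minty's theorem applies verbatim. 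Everything else is a direct consequence of the earlier lemmas.
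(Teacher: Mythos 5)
Your proposal is correct and follows essentially the same route as the paper: part (i) from the contraction-type inequality of Lemma~\ref{l:resol}\ref{l:resol_gen}, and part (ii) by passing to the monotone operator $A-\alpha\Id$, factoring out the positive scalar $1+\gamma\alpha$ (your identity for $\Id+\gamma A$ is exactly the paper's resolvent identity read before inversion), and invoking Fact~\ref{f:Minty} together with Lemma~\ref{l:transfer}\ref{l:transfer_maxmono}. No gaps.
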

\begin{proof}
\ref{p:resol_single}: This follows from Lemma~\ref{l:resol}\ref{l:resol_gen}.

\ref{p:resol_dom}: By Lemma~\ref{l:transfer}\ref{l:transfer_mono}, $A':=A-\alpha\Id$ is monotone.
Noting that $(\beta T)^{-1} =T^{-1}\circ \frac{1}{\beta}\Id$ for any operator $T$ and any $\beta\in \RR\smallsetminus\{0\}$, we have
\begin{subequations}
\begin{align}
J_{\gamma A}&=(\Id+\gamma A)^{-1}=\big((1+\gamma\alpha)\Id+\gamma(A-\alpha\Id)\big)^{-1}\\
&=\left(\Id+\frac{\gamma}{1+\gamma\alpha} A'\right)^{-1} \circ \left(\frac{1}{1+\gamma\alpha}\Id\right)\\
&=J_{\frac{\gamma}{1+\gamma\alpha}A'}\circ \left(\frac{1}{1+\gamma\alpha}\Id\right).
\end{align}
\end{subequations}
It follows that 
\begin{subequations}
\begin{align}
\dom J_{\gamma A} =X & \iff \dom J_{\frac{\gamma}{1+\gamma\alpha}A'} =X &&\\
& \iff \text{$A'$ is maximally monotone} &&\hspace*{-.8in}\text{(by Fact~\ref{f:Minty})} \\
& \iff \text{$A$ is maximally $\alpha$-monotone} &&\hspace*{-.8in}\text{(by Lemma~\ref{l:transfer}\ref{l:transfer_maxmono})}.
\end{align}
\end{subequations}
The proof is complete.
\end{proof}

Next, we further characterize the maximal $\alpha$-monotonicity.

\begin{proposition}[maximal $\alpha$-monotonicity]
\label{p:maxalpha}
The following statements hold: 
\begin{enumerate}
\item\label{p:maxalpha_mono} 
Let $A\colon X\rightrightarrows X$ and $\alpha\in \RP$. Then $A$ is maximally $\alpha$-monotone if and only if $A$ is $\alpha$-monotone and maximally monotone.
\item\label{p:maxalpha_cont} 
Let $A\colon X\to X$ and $\alpha\in \RR$. Then $A$ is maximally $\alpha$-monotone if $A$ is $\alpha$-monotone and continuous with full domain.
\end{enumerate}
\end{proposition}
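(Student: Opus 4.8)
The plan is to treat the two parts separately, reducing each to a maximality characterization that is already available in the excerpt. For part~\ref{p:maxalpha_mono} the key observation is that, when $\alpha\in\RP$, the range condition $\dom J_{\gamma A}=X$ (i.e.\ $\ran(\Id+\gamma A)=X$) \emph{simultaneously} characterizes maximal monotonicity via Fact~\ref{f:Minty} and maximal $\alpha$-monotonicity via Proposition~\ref{p:resol}\ref{p:resol_dom}; hence the two notions must coincide once ordinary monotonicity is in hand. For part~\ref{p:maxalpha_cont} the plan is to transfer to the monotone case through Lemma~\ref{l:transfer} and invoke the standard fact that an everywhere-defined continuous monotone operator is maximally monotone.

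For part~\ref{p:maxalpha_mono}, I would fix any $\gamma\in\RPP$; since $\alpha\ge0$ we have $1+\gamma\alpha\ge1>0$, so Proposition~\ref{p:resol}\ref{p:resol_dom} applies whenever $A$ is $\alpha$-monotone. Suppose first that $A$ is maximally $\alpha$-monotone. Then $A$ is $\alpha$-monotone, and because $\alpha\ge0$ the defining inequality \eqref{e:alpha-mono} gives $\scal{x-y}{u-v}\ge\alpha\|x-y\|^2\ge0$, so $A$ is monotone. Proposition~\ref{p:resol}\ref{p:resol_dom} then yields $\dom J_{\gamma A}=X$, and Fact~\ref{f:Minty} upgrades the monotone operator $A$ to a maximally monotone one; thus $A$ is $\alpha$-monotone and maximally monotone. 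Conversely, if $A$ is $\alpha$-monotone and maximally monotone, Fact~\ref{f:Minty} gives $\dom J_{\gamma A}=X$, and Proposition~\ref{p:resol}\ref{p:resol_dom} (again using $1+\gamma\alpha>0$) returns maximal $\alpha$-monotonicity. This closes the equivalence.

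For part~\ref{p:maxalpha_cont}, I would set $B:=A-\alpha\Id$. Since $A$ is $\alpha$-monotone, Lemma~\ref{l:transfer}\ref{l:transfer_mono} shows $B$ is monotone, and $B$ is continuous with $\dom B=X$ because $A$ is and $\alpha\Id$ is. By the standard result that an everywhere-defined continuous monotone operator is maximally monotone (see, e.g., \cite[Corollary~20.28]{BC17}), $B$ is maximally monotone, and Lemma~\ref{l:transfer}\ref{l:transfer_maxmono} with $\beta=\alpha$ then shows $A=B+\alpha\Id$ is maximally $\alpha$-monotone. Alternatively, one can argue directly without the external citation: if $\gra A\cup\{(x_0,u_0)\}$ were $\alpha$-monotone with $(x_0,u_0)\notin\gra A$, then taking $x=x_0+tz$ gives $\scal{z}{A(x_0+tz)-u_0}\ge\alpha t\|z\|^2$ for all $z\in X$ and all $t>0$; letting $t\downarrow0$ and using continuity of $A$ yields $\scal{z}{Ax_0-u_0}\ge0$, and replacing $z$ by $-z$ forces $Ax_0=u_0$, a contradiction.

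The routine portions (the monotonicity bookkeeping and the algebraic transfers) are immediate from Lemma~\ref{l:transfer}. I expect the only genuine obstacle to lie in part~\ref{p:maxalpha_cont}: establishing maximal monotonicity of $B$ from continuity and full domain is exactly the Browder--Minty-type step that cannot be obtained from the resolvent identities alone, since it encodes a surjectivity/fixed-point phenomenon. It is worth noting that part~\ref{p:maxalpha_cont} is stated only as a sufficient condition and holds for \emph{all} $\alpha\in\RR$, including the weakly monotone regime $\alpha<0$; there the range-condition route of part~\ref{p:maxalpha_mono} is unavailable, which is precisely why the continuity argument (or its cited counterpart) is the essential ingredient.
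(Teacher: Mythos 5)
Your proof is correct and follows essentially the same route as the paper: part~\ref{p:maxalpha_mono} via the equivalence chain through $\dom J_{\gamma A}=X$ using Fact~\ref{f:Minty} and Proposition~\ref{p:resol}\ref{p:resol_dom}, and part~\ref{p:maxalpha_cont} by passing to $A-\alpha\Id$ via Lemma~\ref{l:transfer} and citing \cite[Corollary~20.28]{BC17}. Your alternative self-contained argument for part~\ref{p:maxalpha_cont} is also valid, though it actually shows this step is an elementary hemicontinuity argument rather than the Browder--Minty surjectivity phenomenon your closing remark suggests.
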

\begin{proof}
\ref{p:maxalpha_mono}: Since $\alpha\geq 0$, it follows from Fact~\ref{f:Minty} that $A$ is $\alpha$-monotone and maximally monotone if and only if $A$ is $\alpha$-monotone and $\dom J_A =X$, which, by Proposition~\ref{p:resol}\ref{p:resol_dom}, happen if and only if $A$ is maximally $\alpha$-monotone.

\ref{p:maxalpha_cont}: Set $A' :=A-\alpha\Id$. Then $A'$ is monotone (due to Lemma~\ref{l:transfer}\ref{l:transfer_mono}) and continuous with full domain. By \cite[Corollary~20.28]{BC17}, $A'$ is maximally monotone, and by Lemma~\ref{l:transfer}\ref{l:transfer_maxmono}, $A$ is maximally $\alpha$-monotone. 
\end{proof}

In fact, an anonymous colleague has led us to the simple but important equivalence in Proposition~\ref{p:maxalpha}\ref{p:maxalpha_mono}.
From now on, we will simply use maximal $\alpha$-monotonicity whenever convenient.

\begin{remark}[Lipschitz $\alpha$-monotone operators]
\label{r:Lip_mono}
Suppose that $A$ is Lipschitz continuous with constant $\ell$. Then $A$ is single-valued and 
\begin{equation}
\forall x,y\in\dom A,\quad
|\scal{x-y}{Ax-Ay}|\leq \|x-y\|\cdot\|Ax-Ay\|\leq \ell\|x-y\|^2,
\end{equation}
which yields
\begin{equation}
\label{e:L-inequality}
\forall x,y\in\dom A,\quad -\ell\|x-y\|^2\leq \scal{x-y}{Ax-Ay}\leq \ell\|x-y\|^2.
\end{equation}
We immediately deduce that $A$ is $(-\ell)$-monotone. Now suppose, in addition, that $A$ is $\alpha$-monotone. On the one hand, we can always assume without loss of generality that $\alpha\geq-\ell$. On the other hand, it follows from the $\alpha$-monotonicity and \eqref{e:L-inequality} that $\alpha\leq \ell$ as soon as $\dom A$ has more than one element. Therefore, unless otherwise stated, whenever $A$ is both $\alpha$-monotone and Lipschitz continuous with constant $\ell$, we assume that $|\alpha|\leq\ell$.
\end{remark}

As seen in the following lemma, when an $\alpha$-monotone operator is also Lipschitz continuous, its resolvent possesses metric properties \emph{stronger} than Lemma~\ref{l:resol}. Some of these properties were also observed in \cite{Gis17,MV18} for the $\alpha\geq 0$ case.

\begin{lemma}[resolvents of Lipschitz $\alpha$-monotone operators]
\label{l:L-resol}
Let $A\colon X\to X$ be Lipschitz continuous with constant $\ell$ and let $\gamma\in \RPP$. Then the following hold:
\begin{enumerate}
\item\label{l:L-resol_only} 
For all $(x,a), (y,b)\in \gra J_{\gamma A}$,
\begin{subequations}
\begin{align}
\label{e:expan}
\|a-b\|&\geq \frac{1}{1+\gamma\ell}\|x-y\|,\\
\label{e:L-resol_only}
\scal{x-y}{a-b}&\geq \frac{1}{2}\|x-y\|^2 +\frac{1}{2}(1-\gamma^2\ell^2)\|a-b\|^2,
\end{align}
\end{subequations}
and if $\gamma\ell\leq 1$, then
\begin{equation}\label{e:L-resol_only1}
\scal{x-y}{a-b}\geq \frac{1}{1+\gamma\ell}\|x-y\|^2.
\end{equation}
\item\label{l:L-resol_mono}
If $A$ is $\alpha$-monotone with $1+\gamma\alpha >0$, then, for all $x, y\in \dom J_{\gamma A}$, 
\begin{equation}
\scal{x-y}{J_{\gamma A}x-J_{\gamma A}y}\geq (1+\gamma\alpha)\alpha_J\|x-y\|^2,
\end{equation}
where 
\begin{subequations}
\label{e:alpha_J}
\begin{equation}
\alpha_J 
:=\begin{cases}
\dfrac{1}{1+2\gamma\alpha+\gamma^2\ell^2} &\text{if~} \gamma\ell\geq 1 
,\\
\dfrac{1}{(1+\gamma\alpha)(1+\gamma\ell)} &\text{if~} \gamma\ell\leq 1;
\end{cases}
\end{equation}
and if additionally $A$ satisfies \eqref{e:alpha-mono} with equality, then 
\begin{equation}
\alpha_J :=\frac{1}{1+2\gamma\alpha+\gamma^2\ell^2}. 
\end{equation}
\end{subequations}
\end{enumerate}
\end{lemma}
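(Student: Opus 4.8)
The plan is to exploit that Lipschitz continuity makes $A$ single-valued, so that any $(x,a),(y,b)\in\gra J_{\gamma A}$ satisfy $x=a+\gamma Aa$ and $y=b+\gamma Ab$. Setting $u:=a-b$ and $w:=Aa-Ab$, everything is then governed by the single relation $x-y=u+\gamma w$ together with the Lipschitz estimate $\|w\|\le\ell\|u\|$. All the asserted inequalities will be obtained from this relation by expanding the relevant norms and inner products and then inserting $\|w\|\le\ell\|u\|$ and, in part~\ref{l:L-resol_mono}, the monotonicity bound $\scal{u}{w}\ge\alpha\|u\|^2$.

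For part~\ref{l:L-resol_only}, I would first estimate $\|x-y\|=\|u+\gamma w\|\le\|u\|+\gamma\|w\|\le(1+\gamma\ell)\|u\|$, which is \eqref{e:expan}. Next, the polarization identity $\scal{x-y}{u}=\tfrac12\big(\|x-y\|^2+\|u\|^2-\|x-y-u\|^2\big)$ combined with $x-y-u=\gamma w$ gives the exact relation $\scal{x-y}{a-b}=\tfrac12\|x-y\|^2+\tfrac12\|a-b\|^2-\tfrac12\gamma^2\|w\|^2$; substituting $\|w\|^2\le\ell^2\|a-b\|^2$ yields \eqref{e:L-resol_only}. When $\gamma\ell\le1$ the factor $1-\gamma^2\ell^2=(1-\gamma\ell)(1+\gamma\ell)$ is nonnegative, so the term $\tfrac12(1-\gamma^2\ell^2)\|a-b\|^2$ may be bounded below by \eqref{e:expan}; the two contributions combine to $\tfrac{1}{1+\gamma\ell}\|x-y\|^2$, giving \eqref{e:L-resol_only1}.

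For part~\ref{l:L-resol_mono}, single-valuedness of $J_{\gamma A}$ follows from Proposition~\ref{p:resol}\ref{p:resol_single} since $1+\gamma\alpha>0$, so I may write $a=J_{\gamma A}x$, $b=J_{\gamma A}y$. The case $\gamma\ell\le1$ is immediate, since there $(1+\gamma\alpha)\alpha_J=\tfrac{1}{1+\gamma\ell}$ and the required estimate is precisely \eqref{e:L-resol_only1} (and uses no monotonicity). For $\gamma\ell\ge1$, I would multiply the target inequality by the denominator $1+2\gamma\alpha+\gamma^2\ell^2$, which is positive because it dominates $(1+\gamma\alpha)^2$ (using $\gamma^2\ell^2\ge\gamma^2\alpha^2$ from Remark~\ref{r:Lip_mono}), and reduce it — via $\scal{x-y}{a-b}=\|u\|^2+\gamma\scal{u}{w}$ and $\|x-y\|^2=\|u\|^2+2\gamma\scal{u}{w}+\gamma^2\|w\|^2$ — to showing that
\[
(\alpha+\gamma\ell^2)\|u\|^2+(\gamma^2\ell^2-1)\scal{u}{w}-\gamma(1+\gamma\alpha)\|w\|^2\ge0.
\]
The sign of the cross-term coefficient $\gamma^2\ell^2-1$ is decisive: for $\gamma\ell\ge1$ it is nonnegative, so the monotonicity bound $\scal{u}{w}\ge\alpha\|u\|^2$ enters in the correct direction, the $\|u\|^2$-terms collapse to $\gamma\ell^2(1+\gamma\alpha)\|u\|^2$, and the whole expression becomes $\gamma(1+\gamma\alpha)\big(\ell^2\|u\|^2-\|w\|^2\big)\ge0$ by $\|w\|\le\ell\|u\|$.

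For the equality refinement, assuming $\scal{u}{w}=\alpha\|u\|^2$ exactly gives $\scal{x-y}{a-b}=(1+\gamma\alpha)\|u\|^2$ and $\|x-y\|^2=(1+2\gamma\alpha)\|u\|^2+\gamma^2\|w\|^2$. Since the numerator is nonnegative and the denominator is increasing in $\|w\|^2$, the ratio $\scal{x-y}{a-b}/\|x-y\|^2$ is minimized at $\|w\|^2=\ell^2\|u\|^2$, where it equals $\tfrac{1+\gamma\alpha}{1+2\gamma\alpha+\gamma^2\ell^2}=(1+\gamma\alpha)\alpha_J$; this holds irrespective of whether $\gamma\ell\ge1$ or $\gamma\ell\le1$, which is why the case distinction in $\alpha_J$ disappears. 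I expect the only real obstacle to be the bookkeeping around the cross term $\scal{u}{w}$: its coefficient changes sign exactly at $\gamma\ell=1$, which is the structural reason the constant $\alpha_J$ bifurcates, and one must verify that every denominator is positive before clearing it.
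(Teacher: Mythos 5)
Your proof is correct and takes essentially the same route as the paper's: part \ref{l:L-resol_only} is established by the identical triangle-inequality and polarization estimates, and part \ref{l:L-resol_mono} rests on the same decomposition $x-y=(a-b)+\gamma(Aa-Ab)$ together with the same key observation that the cross-term coefficient changes sign at $\gamma\ell=1$. The only difference is organizational: where the paper substitutes the cocoercivity bound of Lemma~\ref{l:resol}\ref{l:resol_single} into inequality \eqref{e:L-resol_only} (and, in the equality case, substitutes the exact identity), you clear the positive denominator $1+2\gamma\alpha+\gamma^2\ell^2$ and apply the raw $\alpha$-monotonicity of $A$ and the Lipschitz bound directly, which amounts to the same algebra performed in a different order.
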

\begin{proof}
\ref{l:L-resol_only}: Let $(x,a), (y,b)\in \gra J_{\gamma A}$. Then $x =a+\gamma Aa$ and $y =b+\gamma Ab$. By Lipschitz continuity, $\|Aa-Ab\|\leq \ell\|a-b\|$. It follows that
\begin{equation}
\|x-y\|\leq \|a-b\| +\gamma\|Aa-Ab\|\leq (1+\gamma\ell)\|a-b\|
\end{equation}
and that
\begin{subequations}
\label{e:L-resol}
\begin{align}
2\scal{x-y}{a-b} &=\|x-y\|^2 +\|a-b\|^2 -\|(x-a)-(y-b)\|^2\\
&=\|x-y\|^2 +\|a-b\|^2 -\gamma^2\|Aa-Ab\|^2\\
&\geq \|x-y\|^2 +(1-\gamma^2\ell^2)\|a-b\|^2.
\end{align}
\end{subequations}
If $\gamma\ell\leq 1$, then combining the above inequalities yields
\begin{equation}
2\scal{x-y}{a-b}\geq \|x-y\|^2 +\frac{1-\gamma^2\ell^2}{(1+\gamma\ell)^2}\|x-y\|^2 =\frac{2}{1+\gamma\ell}\|x-y\|^2,
\end{equation}
and we get the claim.

\ref{l:L-resol_mono}: We first note that $J_{\gamma A}$ is single-valued due to Proposition~\ref{p:resol}\ref{p:resol_single}. Then \eqref{e:L-resol} reads as
\begin{equation}
\label{e:L-resol'}
2\scal{x-y}{J_{\gamma A}x-J_{\gamma A}y}\geq \|x-y\|^2 +(1-\gamma^2\ell^2)\|J_{\gamma A}x-J_{\gamma A}y\|^2.
\end{equation}
We claim that if $\gamma\ell\geq 1$ or $A$ satisfies \eqref{e:alpha-mono} with equality, then 
\begin{equation}
\label{e:L-resol''}
2\scal{x-y}{J_{\gamma A}x-J_{\gamma A}y}\geq \|x-y\|^2 +\frac{1-\gamma^2\ell^2}{1+\gamma\alpha}\scal{x-y}{J_{\gamma A}x-J_{\gamma A}y}.
\end{equation} 
Indeed, the former case implies $1-\gamma^2\ell^2\leq 0$ and, by combining \eqref{e:L-resol'} with Lemma~\ref{l:resol}\ref{l:resol_single} and noting that $1+\gamma\alpha >0$, we get \eqref{e:L-resol''}.  
In the latter case, Lemma~\ref{l:resol}\ref{l:resol_single} reduces to
\begin{equation}
\scal{x-y}{J_{\gamma A}x-J_{\gamma A}y} =(1+\gamma\alpha)\|J_{\gamma A}x-J_{\gamma A}y\|^2.
\end{equation} 
Substituting this into \eqref{e:L-resol'}, we also obtain \eqref{e:L-resol''}. 

Now, in view of Remark~\ref{r:Lip_mono}, $1+2\gamma\alpha+\gamma^2\ell^2\geq 1+2\gamma\alpha+\gamma^2\alpha^2 =(1+\gamma\alpha)^2 >0$. It thus follows from \eqref{e:L-resol''} that 
\begin{equation}
\scal{x-y}{J_{\gamma A}x-J_{\gamma A}y}\geq \frac{1+\gamma\alpha}{1+2\gamma\alpha+\gamma^2\ell^2}\|x-y\|^2.
\end{equation}
Finally, if $\gamma\ell\leq 1$, then, by \ref{l:L-resol_only}, 
\begin{equation}
\scal{x-y}{J_{\gamma A}x-J_{\gamma A}y}\geq \frac{1}{1+\gamma\ell}\|x-y\|^2,
\end{equation}
and the conclusion follows.
\end{proof}

\begin{remark}[a case of equality in \eqref{e:alpha-mono}]
\label{r:equality}
At first glance, an operator that satisfies \eqref{e:alpha-mono} with equality seems unusual. Nevertheless, it turns out that there is a special operator class that falls into this case. Indeed, let $S\colon X\to X$ be a linear skew operator, i.e., $S^* =-S$. Define $A :=S+\alpha\Id$ with $\alpha\in \RR$. Then, for all $x,y\in \dom A =\dom S$, we have that 
\begin{equation}
\scal{x-y}{Ax-Ay} =\scal{x-y}{S(x-y)} +\alpha\|x-y\|^2 =\alpha\|x-y\|^2,
\end{equation}
i.e., $A$  satisfies \eqref{e:alpha-mono} with equality.
\end{remark}

Next, we turn our attention to the relaxed resolvent of an $\alpha$-monotone operator, which is a special case of the linear combination of the resolvent and the identity. We will establish two types of metric estimations for relaxed resolvents, one for general $\alpha$-monotone operators and one for Lipschitz $\alpha$-monotone operators. In fact, the latter case possesses some Lipschitz estimations, which help when proving the linear convergence in the next section.

\begin{proposition}[linear combinations of resolvents and the identity]
\label{p:cresol}
Let $A\colon X\rightrightarrows X$ be $\alpha$-monotone and let $\gamma\in \RPP$. Set $J:=J_{\gamma A}$ and define $Q :=\nu\Id+\lambda J$ with $\nu, \lambda\in \RR$. 
\begin{enumerate}
\item\label{p:cresol_mono} 
Suppose that $J$ is single-valued and $\nu\lambda\leq 0$. Then, for all $x,y\in \dom J$, 
\begin{equation}
\|Qx-Qy\|^2\leq \nu^2\|x-y\|^2 +\lambda\big(2\nu(1+\gamma\alpha)+\lambda\big)\|Jx-Jy\|^2.
\end{equation}
\item\label{p:cresol_Lip}
Suppose that $A$ is Lipschitz continuous with constant $\ell$, $1+\gamma\alpha >0$, and $\lambda\big(2\nu(1+\gamma\alpha)+\lambda\big)\leq 0$. Then $Q$ is Lipschitz continuous with constant    
\begin{equation}
\label{e:LipQ}
\rho :=\sqrt{\nu^2+\lambda\big(2\nu(1+\gamma\alpha)+\lambda\big)\alpha_J},
\end{equation}
where $\alpha_J$ is defined as \eqref{e:alpha_J}.
If additionally $\lambda\left(\nu+\frac{\lambda}{1-\gamma^2\ell^2}\right)\geq 0$ whenever $\gamma\ell <1$, then the Lipschitz constant \eqref{e:LipQ} can be improved to
\begin{equation}
\overline{\rho} :=\sqrt{\nu^2+\frac{\lambda\big(2\nu(1+\gamma\alpha)+\lambda\big)}{1+2\gamma\alpha+\gamma^2\ell^2}}\leq \rho.
\end{equation}
\end{enumerate}
\end{proposition}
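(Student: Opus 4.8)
The plan is to work directly from the expansion
$\|Qx-Qy\|^2 = \nu^2\|x-y\|^2 + 2\nu\lambda\scal{x-y}{Jx-Jy} + \lambda^2\|Jx-Jy\|^2$, abbreviating $a:=\|x-y\|^2$, $b:=\|Jx-Jy\|^2$, and $p:=\scal{x-y}{Jx-Jy}$. For part~\ref{p:cresol_mono}, single-valuedness makes $J$ $(1+\gamma\alpha)$-cocoercive by Lemma~\ref{l:resol}\ref{l:resol_single}, i.e.\ $p\geq(1+\gamma\alpha)b$; since $\nu\lambda\leq 0$, multiplying this bound by $2\nu\lambda$ reverses it, and substituting into the expansion gives $\|Qx-Qy\|^2\leq \nu^2 a + (2\nu\lambda(1+\gamma\alpha)+\lambda^2)b$, which is exactly the claim because $2\nu\lambda(1+\gamma\alpha)+\lambda^2=\lambda(2\nu(1+\gamma\alpha)+\lambda)$.

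For part~\ref{p:cresol_Lip}, $J$ is single-valued by Proposition~\ref{p:resol}\ref{p:resol_single}, and I would first record that $1+\gamma\alpha>0$ together with $\lambda(2\nu(1+\gamma\alpha)+\lambda)\leq 0$ forces $\nu\lambda\leq 0$ (examine the two sign patterns of $\lambda$). Writing $c:=\lambda(2\nu(1+\gamma\alpha)+\lambda)\leq 0$ and $D:=1+2\gamma\alpha+\gamma^2\ell^2$, the target $\|Qx-Qy\|^2\leq \rho^2 a$ amounts to $c\alpha_J\,a - 2\nu\lambda\,p - \lambda^2 b\geq 0$. The strategy is to exhibit this quantity as a \emph{nonnegative} linear combination of the resolvent inequalities already available for $J$: the Lipschitz identity \eqref{e:L-resol'} in the form $2p\geq a+(1-\gamma^2\ell^2)b$, cocoercivity $p\geq(1+\gamma\alpha)b$, and, when $\gamma\ell\leq 1$, the estimate \eqref{e:L-resol_only1} in the form $p\geq \frac{1}{1+\gamma\ell}a$.

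Concretely, following the case split in \eqref{e:alpha_J}: when $\gamma\ell\geq 1$ I would combine \eqref{e:L-resol'} and cocoercivity with weights $\mu_1=-c/D\geq 0$ and $\mu_2=(2\nu\lambda(1-\gamma^2\ell^2)+2\lambda^2)/D$, the latter being nonnegative precisely because $1-\gamma^2\ell^2\leq 0$ and $\nu\lambda\leq 0$; matching the coefficients of $a$, $b$, $p$ reproduces the target identically with $\alpha_J=1/D$. When $\gamma\ell\leq 1$ I would instead combine \eqref{e:L-resol_only1} and cocoercivity with the manifestly nonnegative weights $-c/(1+\gamma\alpha)$ and $\lambda^2/(1+\gamma\alpha)$, yielding $\alpha_J=1/((1+\gamma\alpha)(1+\gamma\ell))$. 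For the equality variant of \eqref{e:alpha-mono}, cocoercivity becomes $p=(1+\gamma\alpha)b$, which I would substitute directly to get $\|Qx-Qy\|^2=\nu^2 a+cb$; then \eqref{e:L-resol'} gives $Db\geq a$, and since $c\leq 0$ this produces the sharper $\alpha_J=1/D$ with no case split.

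Finally, for the improved constant $\overline{\rho}$ when $\gamma\ell<1$, I would reuse the $\gamma\ell\geq 1$ combination of \eqref{e:L-resol'} and cocoercivity, which algebraically still reproduces $\alpha_J=1/D$; now $1-\gamma^2\ell^2>0$, so $\mu_2\geq 0$ reduces exactly to the extra hypothesis after factoring $\mu_2=\frac{2(1-\gamma^2\ell^2)}{D}\,\lambda\bigl(\nu+\frac{\lambda}{1-\gamma^2\ell^2}\bigr)$. The bound $\overline{\rho}\leq\rho$ then follows from $c\leq 0$ and the identity $(1+\gamma\alpha)(1+\gamma\ell)-D=\gamma(\ell-\alpha)(1-\gamma\ell)\geq 0$ (using $\alpha\leq\ell$ from Remark~\ref{r:Lip_mono} and $\gamma\ell\leq 1$). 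I expect the main obstacle to be discovering and verifying the correct certificate weights $\mu_1,\mu_2$ in each regime, and recognizing that one must \emph{not} route through part~\ref{p:cresol_mono}: its bound discards the Lipschitz information in \eqref{e:L-resol'}, and the residual requirement $b\geq\alpha_J a$ genuinely fails when $\gamma\ell\leq 1$, so the two inequalities must be combined simultaneously rather than applied in sequence.
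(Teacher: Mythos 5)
Your proposal is correct --- I checked the coefficient identities behind both certificates and they close exactly --- and part~\ref{p:cresol_mono} coincides with the paper's proof verbatim. For part~\ref{p:cresol_Lip}, however, your organization differs from the paper's in a way worth recording. The paper does \emph{not} work with explicit certificate weights: it first uses cocoercivity (Lemma~\ref{l:resol}\ref{l:resol_single}) to replace $\lambda^2\|Jx-Jy\|^2$ by $\tfrac{\lambda^2}{1+\gamma\alpha}\scal{x-y}{Jx-Jy}$, collects the inner-product coefficient into $\tfrac{\lambda(2\nu(1+\gamma\alpha)+\lambda)}{1+\gamma\alpha}\leq 0$, and then invokes the packaged lower bound of Lemma~\ref{l:L-resol}\ref{l:L-resol_mono}, $\scal{x-y}{Jx-Jy}\geq(1+\gamma\alpha)\alpha_J\|x-y\|^2$, which is where the case split of \eqref{e:alpha_J} lives; your proof inlines that lemma, combining \eqref{e:L-resol'}, cocoercivity, and \eqref{e:L-resol_only1} directly with nonnegative weights, so the two arguments use identical primitive inequalities but yours flattens the two-step chain into a one-shot verification (at the price of needing the auxiliary observation $\nu\lambda\leq 0$, which you derive correctly from $1+\gamma\alpha>0$ and $c:=\lambda(2\nu(1+\gamma\alpha)+\lambda)\leq 0$). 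The divergence is most pronounced for the improved constant $\overline{\rho}$: the paper argues by a pointwise dichotomy --- if $\scal{x-y}{Jx-Jy}\geq(1+\gamma\alpha)\overline{\alpha}_J\|x-y\|^2$ reuse the chain, otherwise conclude $\gamma\ell<1$, solve \eqref{e:L-resol_only} for $\|Jx-Jy\|^2$, and exploit the hypothesis $\lambda\bigl(\nu+\tfrac{\lambda}{1-\gamma^2\ell^2}\bigr)\geq 0$ together with the assumed upper bound on the inner product --- whereas your single certificate with weights $\mu_1=-c/D$ and $\mu_2=\tfrac{2(1-\gamma^2\ell^2)}{D}\lambda\bigl(\nu+\tfrac{\lambda}{1-\gamma^2\ell^2}\bigr)$ covers every pair $x,y$ uniformly, with the extra hypothesis entering only as the nonnegativity of $\mu_2$; this is arguably cleaner and makes transparent exactly what the hypothesis buys. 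Your closing observation that one cannot route through part~\ref{p:cresol_mono} (since the guaranteed lower bound $\|Jx-Jy\|^2\geq\|x-y\|^2/(1+\gamma\ell)^2$ falls short of $\alpha_J\|x-y\|^2$ when $\alpha<\ell$) matches the remark the paper itself makes right after the proposition, where that route is shown to yield only the weaker constant $\rho'$.
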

\begin{proof}
Let $x,y\in \dom J$. By the definition of $Q$,
\begin{subequations}
\label{e:identityQ}
\begin{align}
\|Qx-Qy\|^2
&=\|\nu(x-y)+\lambda(Jx-Jy)\|^2\\
&=\nu^2\|x-y\|^2 +2\nu\lambda\scal{x-y}{Jx-Jy} +\lambda^2\|Jx-Jy\|^2.
\end{align}
\end{subequations}

\ref{p:cresol_mono}: Since $\nu\lambda\leq 0$, combining \eqref{e:identityQ} with Lemma~\ref{l:resol}\ref{l:resol_single} yields
\begin{subequations}
\begin{align}
\|Qx-Qy\|^2
&\leq \nu^2\|x-y\|^2 +2\nu\lambda(1+\gamma\alpha)\|Jx-Jy\|^2 +\lambda^2\|Jx-Jy\|^2\\
&=\nu^2\|x-y\|^2 +\lambda\big(2\nu(1+\gamma\alpha)+\lambda\big)\|Jx-Jy\|^2.
\end{align}
\end{subequations}

\ref{p:cresol_Lip}: First, according to Proposition~\ref{p:resol}\ref{p:resol_single}, $J$ is single-valued, and so is $Q$. Next, using \eqref{e:identityQ}, Lemma~\ref{l:resol}\ref{l:resol_single}, and Lemma~\ref{l:L-resol}\ref{l:L-resol_mono} and noting that $\lambda\big(2\nu(1+\gamma\alpha)+\lambda\big)\leq 0$, we have
\begin{subequations}
\label{e:cresol_Lip}
\begin{align}
\|Qx-Qy\|^2 
&\leq \nu^2\|x-y\|^2 +2\nu\lambda\scal{x-y}{Jx-Jy} +\frac{\lambda^2}{1+\gamma\alpha}\scal{x-y}{Jx-Jy}\\ 
&= \nu^2\|x-y\|^2 +\frac{\lambda\big(2\nu(1+\gamma\alpha)+\lambda\big)}{1+\gamma\alpha}\scal{x-y}{Jx-Jy}\\  
&\leq \nu^2\|x-y\|^2 +\lambda\big(2\nu(1+\gamma\alpha)+\lambda\big)\alpha_J\|x-y\|^2 =\rho^2\|x-y\|^2, 
\end{align}
\end{subequations}
which implies that $Q$ is Lipschitz continuous with constant $\rho$.

For the last statement, we show that $\alpha_J$ in formula \eqref{e:LipQ} can be replaced by 
\begin{equation}
\overline{\alpha}_J :=\frac{1}{1+2\gamma\alpha+\gamma^2\ell^2}.
\end{equation}
If $\scal{x-y}{Jx-Jy}\geq (1+\gamma\alpha)\overline{\alpha}_J\|x-y\|^2$, then \eqref{e:cresol_Lip} also holds with $\alpha_J$ replaced by $\overline{\alpha}_J$. Now, assume that $\scal{x-y}{Jx-Jy} <(1+\gamma\alpha)\overline{\alpha}_J\|x-y\|^2$. By Lemma~\ref{l:L-resol}\ref{l:L-resol_mono}, we must have $\gamma\ell <1$, and then, by assumption, $\lambda\left(\nu+\frac{\lambda}{1-\gamma^2\ell^2}\right)\geq 0$. It now follows from \eqref{e:identityQ} and \eqref{e:L-resol_only} that
\begin{subequations}
\begin{align}
\|Qx-Qy\|^2
&\leq \nu^2\|x-y\|^2 +2\nu\lambda\scal{x-y}{Jx-Jy} \notag\\ 
&\qquad +\frac{\lambda^2}{1-\gamma^2\ell^2}(2\scal{x-y}{Jx-Jy} -\|x-y\|^2)\\
&=\Big(\nu^2-\frac{\lambda^2}{1-\gamma^2\ell^2}\Big)\|x-y\|^2 +2\lambda\Big(\nu+\frac{\lambda}{1-\gamma^2\ell^2}\Big)\scal{x-y}{Jx-Jy}\\
&\leq \Big(\nu^2-\frac{\lambda^2}{1-\gamma^2\ell^2}\Big)\|x-y\|^2 +2\lambda\Big(\nu+\frac{\lambda}{1-\gamma^2\ell^2}\Big)(1+\gamma\alpha)\overline{\alpha}_J\|x-y\|^2\\
&=\Big(\nu^2+\lambda\big(2\nu(1+\gamma\alpha)+\lambda\big)\overline{\alpha}_J\Big)\|x-y\|^2 =\overline{\rho}^2\|x-y\|^2.
\end{align}
\end{subequations}
Finally, we will prove $\overline{\alpha}_J\geq \alpha_J$, which implies $\overline{\rho}\leq \rho$, i.e., the Lipschitz constant is indeed improved. From the definition of $\alpha_J$, it suffices to consider the case in which $\alpha_J =1/((1+\gamma\alpha)(1+\gamma\ell))$. Then $\gamma\ell\leq 1$. 
Since $|\alpha|\leq \ell$ (see Remark~\ref{r:Lip_mono}), it holds that
\begin{subequations}
\begin{align}
0 <(1+\gamma\alpha)^2\leq 1+2\gamma\alpha+\gamma^2\ell^2
&\leq 
1+2\gamma\alpha+\gamma^2\ell^2
+(\gamma\ell-\gamma\alpha)-\gamma\ell(\gamma\ell-\gamma\alpha)\\
&=1+\gamma\alpha+\gamma\ell+(\gamma\alpha)(\gamma\ell)\\
&=(1+\gamma\alpha)(1+\gamma\ell),
\end{align}
\end{subequations}
and so 
\begin{equation}
\overline{\alpha}_J =\frac{1}{1+2\gamma\alpha+\gamma^2\ell^2}\geq \alpha_J =\frac{1}{(1+\gamma\alpha)(1+\gamma\ell)}.
\end{equation}
The proof is complete.
\end{proof}

\begin{remark}
In the setting of Proposition~\ref{p:cresol}\ref{p:cresol_Lip}, if $\nu\lambda\leq 0$, then one can also obtain a Lipschitz constant of $Q$ via Proposition~\ref{p:cresol}\ref{p:cresol_mono} and \eqref{e:expan} in Lemma~\ref{l:L-resol}\ref{l:L-resol_only}, in particular,
\begin{equation}
\|Qx-Qy\|^2\leq \left(\nu^2+\frac{\lambda\big(2\nu(1+\gamma\alpha)+\lambda\big)}{(1+\gamma\ell)^2}\right)\|x-y\|^2,
\end{equation}
i.e., $Q$ is Lipschitz continuous with constant
\begin{equation}
\rho':=\sqrt{\nu^2+\frac{\lambda\big(2\nu(1+\gamma\alpha)+\lambda\big)}{(1+\gamma\ell)^2}}.
\end{equation}
However, $\rho'$ is actually larger than $\rho$ in \eqref{e:LipQ}, which means that $\rho$ is a better Lipschitz constant than $\rho'$. To see this, since $\lambda(2\nu(1+\gamma\alpha)+\lambda)\leq 0$, we only need to check that $1/(1+\gamma\ell)^2 \leq \alpha_J$. Noting from Remark~\ref{r:Lip_mono} that $\alpha\leq \ell$, we have $0 <1+\gamma\alpha\leq 1+\gamma\ell$ and $0 <1+2\gamma\alpha+\gamma^2\ell^2\leq (1+\gamma\ell)^2$. Therefore,
\begin{equation}
\frac{1}{(1+\gamma\ell)^2}\leq \min\left\{\frac{1}{(1+\gamma\alpha)(1+\gamma\ell)}, \frac{1}{1+2\gamma\alpha+\gamma^2\ell^2}\right\}\leq \alpha_J.
\end{equation}
\end{remark}

\begin{corollary}[relaxed resolvents of $\alpha$-monotone operators]
\label{c:rresol}
Let $A\colon X\rightrightarrows X$ be $\alpha$-monotone and let $\gamma\in \RPP$. Suppose that $J:=J_{\gamma A}$ is single-valued and define $R:=(1-\lambda)\Id +\lambda J$ with $\lambda\in \RP$, and $Q:=R+\varepsilon\Id$ with $\varepsilon\in \RR$. Then the following hold:
\begin{enumerate}
\item\label{c:rresol_relax} 
If $\lambda\geq 1$, then, for all $x, y\in \dom J$, 
\begin{equation}
\|Rx-Ry\|^2\leq (\lambda-1)^2\|x-y\|^2 -\lambda\big((\lambda-1)(2+2\gamma\alpha)-\lambda\big)\|Jx-Jy\|^2.
\end{equation}
\item\label{c:rresol_perturb} 
If $\varepsilon\leq \lambda-1$, then, for all $x, y\in \dom J$, 
\begin{align}
\|Qx-Qy\|^2 &\leq (\lambda-1-\varepsilon)^2\|x-y\|^2 \notag\\ 
&\quad -\lambda\big((\lambda-1)(2+2\gamma\alpha)-\lambda-2\varepsilon(1+\gamma\alpha)\big)\|Jx-Jy\|^2.
\end{align}
Consequently, if additionally $(\lambda-1)(2+2\gamma\alpha)-\lambda-2\varepsilon(1+\gamma\alpha)\geq 0$, then $Q$ is Lipschitz continuous with constant $(\lambda-1-\varepsilon)$.
\end{enumerate} 
\end{corollary}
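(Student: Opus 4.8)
The plan is to recognize that both $R$ and $Q$ are instances of the linear combination $\nu\Id+\lambda J$ analyzed in Proposition~\ref{p:cresol}\ref{p:cresol_mono}, so that each part reduces to a correct choice of $\nu$, verification of the sign hypothesis $\nu\lambda\leq 0$, and an algebraic simplification of the coefficient of $\|Jx-Jy\|^2$ into the stated form.

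For part~\ref{c:rresol_relax}, I would write $R=(1-\lambda)\Id+\lambda J$ and apply Proposition~\ref{p:cresol}\ref{p:cresol_mono} with $\nu:=1-\lambda$, the coefficient of $J$ already being $\lambda$. The required condition $\nu\lambda=(1-\lambda)\lambda\leq 0$ holds precisely because $\lambda\geq 1>0$ forces $1-\lambda\leq 0$; this is exactly where the hypothesis $\lambda\geq 1$ enters. The proposition then gives $\|Rx-Ry\|^2\leq (1-\lambda)^2\|x-y\|^2+\lambda(2(1-\lambda)(1+\gamma\alpha)+\lambda)\|Jx-Jy\|^2$, and it remains only to rewrite $(1-\lambda)^2=(\lambda-1)^2$ and $2(1-\lambda)(1+\gamma\alpha)+\lambda=-((\lambda-1)(2+2\gamma\alpha)-\lambda)$ to recover the claimed inequality.

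For part~\ref{c:rresol_perturb}, I would expand $Q=R+\varepsilon\Id=(1-\lambda+\varepsilon)\Id+\lambda J$ and invoke the same proposition with $\nu:=1-\lambda+\varepsilon$. Here the sign condition $\nu\lambda=(1-\lambda+\varepsilon)\lambda\leq 0$ holds exactly when $\varepsilon\leq \lambda-1$ (again using $\lambda\in\RP$), which is the stated hypothesis. Substituting and performing the analogous simplification, namely $(1-\lambda+\varepsilon)^2=(\lambda-1-\varepsilon)^2$ and $2(1-\lambda+\varepsilon)(1+\gamma\alpha)+\lambda=-((\lambda-1)(2+2\gamma\alpha)-\lambda-2\varepsilon(1+\gamma\alpha))$, yields the desired bound. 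For the final Lipschitz assertion, I would observe that the extra hypothesis makes the coefficient $(\lambda-1)(2+2\gamma\alpha)-\lambda-2\varepsilon(1+\gamma\alpha)$ nonnegative, so the $\|Jx-Jy\|^2$ term is nonpositive and can be discarded, leaving $\|Qx-Qy\|^2\leq (\lambda-1-\varepsilon)^2\|x-y\|^2$; since $\varepsilon\leq\lambda-1$ forces $\lambda-1-\varepsilon\geq 0$, the Lipschitz constant is reported as $(\lambda-1-\varepsilon)$ without an absolute value.

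There is no genuine obstacle here: the whole argument is a direct specialization of Proposition~\ref{p:cresol}\ref{p:cresol_mono}. If any point requires care, it is purely bookkeeping, namely matching the sign of the coefficient of $\|Jx-Jy\|^2$ after expanding $2\nu(1+\gamma\alpha)+\lambda$ with the two different values of $\nu$, and checking the nonnegativity of $\lambda-1-\varepsilon$ so that the stated Lipschitz constant is exact.
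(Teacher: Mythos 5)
Your proposal is correct and follows essentially the same route as the paper: both arguments reduce the corollary to Proposition~\ref{p:cresol}\ref{p:cresol_mono} with $\nu=1-\lambda+\varepsilon$ (resp.\ $\nu=1-\lambda$), verify $\nu\lambda\leq 0$ from the stated hypotheses, and rewrite the coefficient of $\|Jx-Jy\|^2$. The only cosmetic difference is that the paper handles part~\ref{c:rresol_relax} by simply setting $\varepsilon=0$ in part~\ref{c:rresol_perturb}, whereas you specialize the proposition twice, which is slightly redundant but equally valid.
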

\begin{proof}
Because \ref{c:rresol_relax} is a consequence of \ref{c:rresol_perturb} with $\varepsilon =0$, it suffices to prove only the latter. To this end, noting that $Q =R+\varepsilon\Id =(1-\lambda+\varepsilon)\Id+\lambda J$ and using Proposition~\ref{p:cresol}\ref{p:cresol_mono} with $\nu =1-\lambda+\varepsilon \leq 0$, we have that
\begin{subequations}
\begin{align}
\|Qx-Qy\|^2&\leq (1-\lambda+\varepsilon)^2\|x-y\|^2 +\lambda\left(2(1-\lambda+\varepsilon)(1+\gamma\alpha)+\lambda\right)\|Jx-Jy\|^2\\
&=(\lambda-1-\varepsilon)^2\|x-y\|^2 -\lambda\big((\lambda-1)(2+2\gamma\alpha)-\lambda-2\varepsilon(1+\gamma\alpha)\big)\|Jx-Jy\|^2
\end{align}
\end{subequations}
which proves \ref{c:rresol_perturb}. 
\end{proof}

\begin{corollary}[relaxed resolvents of Lipschitz $\alpha$-monotone operators]
\label{c:L-rresol}
Let $A\colon X\to X$ be $\alpha$-monotone and Lipschitz continuous with constant $\ell$. Also let $\gamma\in\RPP$ and $\lambda\in\RPP$ be such that
\begin{equation}\label{e:cL-rresol-1}
1+\gamma\alpha>0
\ \text{ and }\ 
\lambda(1+2\gamma\alpha)-2(1+\gamma\alpha)\geq 0.
\end{equation}
Define $J:=J_{\gamma A}$, $R:=(1-\lambda)\Id +\lambda J$, and $Q :=\Id-\varepsilon R$ with $\varepsilon\in \RP$. Then the following hold:
\begin{enumerate}
\item\label{c:L-rresol_relax} 
$R$ is Lipschitz continuous with constant 
\begin{equation}
\sqrt{(\lambda-1)^2-\frac{\lambda\big((\lambda-1)(2+2\gamma\alpha)-\lambda\big)}{1+2\gamma\alpha+\gamma^2\ell^2}}.
\end{equation} 
\item\label{c:L-rresol_perturb} 
$Q$ is Lipschitz continuous with constant 
\begin{equation}
\sqrt{(1+\varepsilon(\lambda-1))^2 -\varepsilon\lambda\Big[2(1+\gamma\alpha) +\varepsilon\big(\lambda(1+2\gamma\alpha)-2(1+\gamma\alpha)\big)\Big]\alpha_J},
\end{equation}
where $\alpha_J$ is defined as \eqref{e:alpha_J}.
\end{enumerate}
\end{corollary}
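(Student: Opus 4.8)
The plan is to recognize both claims as immediate instances of Proposition~\ref{p:cresol}\ref{p:cresol_Lip}, so that the only genuine work is matching coefficients and verifying the sign hypotheses. Throughout I write the relaxation coefficient appearing in Proposition~\ref{p:cresol} as $\mu$, to avoid clashing with the corollary's $\lambda$; thus that proposition asserts that $\nu\Id+\mu J$ is Lipschitz continuous with constant $\rho=\sqrt{\nu^2+\mu(2\nu(1+\gamma\alpha)+\mu)\alpha_J}$ provided $1+\gamma\alpha>0$ and $\mu(2\nu(1+\gamma\alpha)+\mu)\leq 0$, and that $\alpha_J$ may be upgraded to $\overline{\alpha}_J=1/(1+2\gamma\alpha+\gamma^2\ell^2)$ once, in addition, $\mu(\nu+\mu/(1-\gamma^2\ell^2))\geq 0$ whenever $\gamma\ell<1$.

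For part~\ref{c:L-rresol_relax} I would take $\nu=1-\lambda$ and $\mu=\lambda$, so that $R=\nu\Id+\mu J$. A one-line expansion gives the key identity $\mu(2\nu(1+\gamma\alpha)+\mu)=-\lambda\big((\lambda-1)(2+2\gamma\alpha)-\lambda\big)=-\lambda\big(\lambda(1+2\gamma\alpha)-2(1+\gamma\alpha)\big)$; the second inequality in \eqref{e:cL-rresol-1} together with $\lambda>0$ therefore forces this quantity to be $\leq 0$, which is exactly the sign hypothesis. To justify using the improved constant, I would check that for $\gamma\ell<1$ one has $\mu(\nu+\mu/(1-\gamma^2\ell^2))=\lambda\big(1+\lambda\gamma^2\ell^2/(1-\gamma^2\ell^2)\big)>0$. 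Substituting $\nu^2=(\lambda-1)^2$ and the displayed expression for $\mu(2\nu(1+\gamma\alpha)+\mu)$ into $\overline{\rho}=\sqrt{\nu^2+\mu(2\nu(1+\gamma\alpha)+\mu)\overline{\alpha}_J}$ reproduces the claimed constant verbatim.

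For part~\ref{c:L-rresol_perturb} I would first expand $Q=\Id-\varepsilon R=(1+\varepsilon(\lambda-1))\Id-\varepsilon\lambda J$, so that now $\nu=1+\varepsilon(\lambda-1)$ and $\mu=-\varepsilon\lambda$. The algebraic heart is the identity $2\nu(1+\gamma\alpha)+\mu=2(1+\gamma\alpha)+\varepsilon\big(\lambda(1+2\gamma\alpha)-2(1+\gamma\alpha)\big)$, from which $\mu(2\nu(1+\gamma\alpha)+\mu)=-\varepsilon\lambda\big[2(1+\gamma\alpha)+\varepsilon\big(\lambda(1+2\gamma\alpha)-2(1+\gamma\alpha)\big)\big]$. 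Because $\varepsilon\geq 0$, $\lambda>0$, $1+\gamma\alpha>0$, and $\lambda(1+2\gamma\alpha)-2(1+\gamma\alpha)\geq 0$, the bracket is at least $2(1+\gamma\alpha)>0$, so the product is $\leq 0$ and the sign hypothesis holds. Feeding these into the basic constant $\rho$ (with $\alpha_J$, not $\overline{\alpha}_J$) then yields the stated expression.

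The computations are routine; the one subtlety worth flagging is the bookkeeping of signs. The single standing assumption $\lambda(1+2\gamma\alpha)-2(1+\gamma\alpha)\geq 0$ in \eqref{e:cL-rresol-1} is precisely what drives $\mu(2\nu(1+\gamma\alpha)+\mu)\leq 0$ in both parts, and it is robust under the perturbation by $\varepsilon\geq 0$ in part~\ref{c:L-rresol_perturb}. The main (minor) obstacle is simply noticing that part~\ref{c:L-rresol_relax} is stated with the \emph{improved} constant $\overline{\rho}$, which forces the extra verification $\mu(\nu+\mu/(1-\gamma^2\ell^2))\geq 0$, whereas part~\ref{c:L-rresol_perturb} invokes only the basic constant and hence needs no such check.
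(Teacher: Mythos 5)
Your proposal is correct and follows essentially the same route as the paper's own proof: both parts are obtained by applying Proposition~\ref{p:cresol}\ref{p:cresol_Lip} with the identifications $(\nu,\lambda)=(1-\lambda,\lambda)$ for $R$ and $(\widetilde{\nu},\widetilde{\lambda})=(1+\varepsilon(\lambda-1),-\varepsilon\lambda)$ for $Q$, with the standing hypothesis $\lambda(1+2\gamma\alpha)-2(1+\gamma\alpha)\geq 0$ supplying the required sign condition in each case. You also correctly identified the one subtle point, namely that part~\ref{c:L-rresol_relax} uses the improved constant $\overline{\rho}$ and therefore needs the extra verification $\lambda\bigl(\nu+\lambda/(1-\gamma^2\ell^2)\bigr)\geq 0$ when $\gamma\ell<1$, exactly as in the paper.
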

\begin{proof}
\ref{c:L-rresol_relax}: We observe that $R=\nu\Id+\lambda J$ with $\nu :=1-\lambda$, that
\begin{equation}
\lambda(2\nu(1+\gamma\alpha)+\lambda) =-\lambda\Big(\lambda(1+2\gamma\alpha)-2(1+\gamma\alpha)\Big)\leq 0,
\end{equation} 
and that, whenever $\gamma\ell <1$, 
\begin{equation}
\nu+\frac{\lambda}{1-\gamma^2\ell^2}=
(1-\lambda)+\frac{\lambda}{1-\gamma^2\ell^2} >1-\lambda+\lambda =1 >0.
\end{equation}
Applying Proposition~\ref{p:cresol}\ref{p:cresol_Lip} to $R=\nu\Id+\lambda\Id$ implies that $R$ is Lipschitz continuous with constant
\begin{equation}
\sqrt{(1-\lambda)^2+\frac{\lambda\big(2(1-\lambda)(1+\gamma\alpha)+\lambda\big)}{1+2\gamma\alpha+\gamma^2\ell^2}},
\end{equation} 
which gives the claim.

\ref{c:L-rresol_perturb}: Using the first part of Proposition~\ref{p:cresol}\ref{p:cresol_Lip} and writing $Q =\widetilde{\nu}\Id+\widetilde{\lambda}J$ with $\widetilde{\nu} :=1+\varepsilon(\lambda-1)$ and $\widetilde{\lambda} :=-\varepsilon\lambda$, it suffices to check that
\begin{equation}\label{e:cL-rresol-2}
\widetilde{\lambda}(2\widetilde{\nu}(1+\gamma\alpha)+\widetilde{\lambda})=(-\varepsilon\lambda)\big( 2(1+\varepsilon(\lambda-1))(1+\gamma\alpha)+(-\varepsilon\lambda) \big)\leq 0.
\end{equation}
Indeed, we have that $\varepsilon\geq 0$, $\lambda >0$, and  
\begin{subequations}
\begin{align}
2(1+\varepsilon(\lambda-1))(1+\gamma\alpha) -\varepsilon\lambda 
&=2(1+\gamma\alpha) +\varepsilon\big(2(\lambda-1)(1+\gamma\alpha)-\lambda\big)\\
&=2(1+\gamma\alpha) +\varepsilon(\lambda(1+2\gamma\alpha)-2(1+\gamma\alpha)) >0
\end{align}
\end{subequations}
by \eqref{e:cL-rresol-1}. So \eqref{e:cL-rresol-2} holds and the conclusion follows.
\end{proof}

\section{Adaptive Douglas--Rachford algorithm}
\label{s:aDR}

Throughout this section, $A,B\colon X\rightrightarrows X$, $(\gamma,\delta,\lambda,\mu)\in \RPP^4$, and $\kappa\in \left]0,1\right[$. We define
\begin{subequations}\label{e:JRT}
\begin{align}
J_1&:=J_{\gamma A}=(\Id+\gamma A)^{-1}, 
&R_1&:=J_{\gamma A}^\lambda=(1-\lambda)\Id+\lambda J_1,\\
J_2&:=J_{\delta B}=(\Id+\delta B)^{-1},
&R_2&:=J_{\delta B}^\mu=(1-\mu)\Id+\mu J_2
\end{align}
and consider the \emph{adaptive DR operator} defined by
\begin{align}\label{e:aDR}
T:= (1-\kappa)\Id+ \kappa R_2R_1.
\end{align}
\end{subequations}  
For convenience of notation, we already drop the parameters $\lambda,\mu,\kappa$ and $A,B$ associated with the operators $J_1, R_1, J_2, R_2$, and $T$. When $(\lambda,\mu,\kappa)=(2,2,1/2)$, the operator $T$ in \eqref{e:aDR} reduces to the \emph{classical DR operator} \cite{DR56,LM79}. In fact, formulation \eqref{e:JRT} was previously used in \cite{DP18mor, DP18jogo} for feasibility problems, which allow for eliminating $\gamma$ and $\delta$ while choosing the parameters $\lambda$ and $\mu$ independently. However, such an advantage no longer exists for the case of general operators. In other words, all parameters $\gamma,\delta,\lambda$, and $\mu$ must satisfy a certain set of requirements simultaneously, as we will see shortly.

The adaptive DR operator is indeed motivated by the problem of finding a zero of the sum of two operators, that is,
\begin{equation}\label{e:sumprob}
\text{find $x\in X$ such that}\quad 0\in Ax+Bx.
\end{equation}
We also denote by
\begin{equation}
\zer(A+B) :=(A+B)^{-1}(0) =\menge{x\in X}{0\in Ax+Bx}
\end{equation}
the set of solutions of problem \eqref{e:sumprob}. Given a starting point $x_0\in X$, the \emph{adaptive DR algorithm} generates a sequence $(x_n)_\nnn$, also called a \emph{DR sequence}, by 
\begin{equation}\label{e:dr_seq}
\forall\nnn,\quad x_{n+1}\in Tx_n.
\end{equation}
Then, we expect the DR sequence $(x_n)_\nnn$ to converge to some point $\overline{x}\in\Fix T$ such that $J_1\overline{x}$ contains a solution to the original problem \eqref{e:sumprob}. For this purpose, we will require that
\begin{equation}
\label{e:CQ}
(\lambda-1)(\mu-1) =1 \quad\text{and}\quad 
\delta =(\lambda-1)\gamma, 
\end{equation}
which are also equivalent to $\lambda =\mu/(\mu-1)$ and $\gamma =(\mu-1)\delta$, respectively.
The next lemma shows the {\em necessity} of \eqref{e:CQ}.

\begin{lemma}[fixed points of adaptive DR operator]
\label{l:Fix}
The following statements hold: 
\begin{enumerate}
\item\label{l:Fix_Id-R2R1} 
$\Id-T =\kappa(\Id-R_2R_1)$.
\item\label{l:Fix_J1-J2R1} 
Suppose that $(\lambda-1)(\mu-1) =1$. Then
\begin{equation}
\label{e:Tset-valued}
\forall x\in \dom T, \quad (\Id-T)x =\menge{\kappa\mu\left(a-J_2\left((1-\lambda)x+\lambda a\right)\right)}{a\in J_1x}.
\end{equation}
Consequently, if $J_1$ is single-valued, then $\Id-T =\kappa\mu(J_1-J_2R_1)$.
\item\label{l:Fix_sol} 
Suppose that \eqref{e:CQ} holds. Then $\Fix T\neq \varnothing$ if and only if $\zer(A+B)\neq \varnothing$. Moreover, if $J_1$ is single-valued, then
\begin{equation}
J_1(\Fix T) =\zer(A+B).
\end{equation}
\end{enumerate}
\end{lemma}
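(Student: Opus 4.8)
The plan is to prove the three items in order, treating (i) and (ii) as direct computations that set up the substantive equivalence in (iii). Item (i) is immediate: expanding $T=(1-\kappa)\Id+\kappa R_2R_1$ gives $\Id-T=\kappa\Id-\kappa R_2R_1=\kappa(\Id-R_2R_1)$.

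For (ii), I would fix $x\in\dom T$ and $a\in J_1x$, set $r:=(1-\lambda)x+\lambda a\in R_1x$, and take any $b\in J_2r$, so that $(1-\mu)r+\mu b\in R_2R_1x$. The corresponding element of $(\Id-R_2R_1)x$ is $x-(1-\mu)r-\mu b$; substituting $r$ and collecting terms, the coefficient of $x$ is $1-(1-\mu)(1-\lambda)$ and the coefficient of $a$ is $-(1-\mu)\lambda$. Here the relation $(\lambda-1)(\mu-1)=1$, equivalently $(1-\lambda)(1-\mu)=1$ and $\lambda=\mu/(\mu-1)$, does all the work: it forces the $x$-coefficient to vanish and turns the $a$-coefficient into $(\mu-1)\lambda=\mu$. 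Thus the element equals $\mu(a-b)$, and multiplying by $\kappa$ via (i) yields the asserted set $\{\kappa\mu(a-J_2((1-\lambda)x+\lambda a)):a\in J_1x\}$; the single-valued consequence follows by putting $a=J_1x$ so that $(1-\lambda)x+\lambda a=R_1x$.

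Item (iii) is where the second condition in \eqref{e:CQ} enters. Since $\kappa\mu>0$, item (ii) shows $x\in\Fix T$ iff $0\in(\Id-T)x$ iff there exists $a\in J_1x$ with $a\in J_2((1-\lambda)x+\lambda a)$. I would convert the two resolvent memberships into inclusions: $a\in J_{\gamma A}x$ gives $\frac{x-a}{\gamma}\in Aa$, while $a\in J_{\delta B}r$ with $r=(1-\lambda)x+\lambda a$ gives $\frac{r-a}{\delta}\in Ba$. Computing $r-a=(1-\lambda)(x-a)$ and using $\delta=(\lambda-1)\gamma$ turns the second inclusion into $-\frac{x-a}{\gamma}\in Ba$; adding the two yields $0\in Aa+Ba$, so $a\in\zer(A+B)$. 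This direction gives both $\Fix T\neq\varnothing\Rightarrow\zer(A+B)\neq\varnothing$ and, when $J_1$ is single-valued (so $a=J_1x$), the inclusion $J_1(\Fix T)\subseteq\zer(A+B)$. For the converse I would start from $a\in\zer(A+B)$, pick $u\in Aa$ with $-u\in Ba$, and set $x:=a+\gamma u$; then $x-a=\gamma u\in\gamma Aa$ gives $a\in J_1x$, and $r-a=(1-\lambda)\gamma u=-\delta u=\delta(-u)\in\delta Ba$ gives $a\in J_2r$, so $0\in(\Id-T)x$ and $x\in\Fix T$. Hence $\zer(A+B)\neq\varnothing\Rightarrow\Fix T\neq\varnothing$, and in the single-valued case $a=J_1x$ yields $\zer(A+B)\subseteq J_1(\Fix T)$, completing the equality.

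The hard part is the bookkeeping in (iii): correctly reading off each inclusion from a resolvent membership, tracking the sign and scale through $r-a=(1-\lambda)(x-a)$, and checking that $\delta=(\lambda-1)\gamma$ is precisely what makes the $Ba$-term cancel the $Aa$-term. The set-valued aspects, namely interpreting $a-J_2(\cdot)$ as a set and working with an existential quantifier rather than an equation, also need attention, but the single-valued refinement then drops out cleanly because single-valuedness of $J_1$ pins down $a=J_1x$ uniquely in both directions.
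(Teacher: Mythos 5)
Your proposal is correct and follows essentially the same route as the paper: the same expansions in (i)--(ii) driven by $(1-\lambda)(1-\mu)=1$ and $\lambda(\mu-1)=\mu$, and in (iii) the same translation of the memberships $a\in J_1x$ and $a\in J_2\big((1-\lambda)x+\lambda a\big)$ into the inclusions $x-a\in\gamma Aa$ and $-(x-a)\in\gamma Ba$ via $\delta=(\lambda-1)\gamma$. The only difference is presentational: the paper compresses (iii) into a single chain of equivalences, while you unroll it into two implications with the explicit point $x=a+\gamma u$ for the converse, which if anything treats the set-valued bookkeeping slightly more carefully.
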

\begin{proof}
\ref{l:Fix_Id-R2R1}: This is clear from the definition of $T$.

\ref{l:Fix_J1-J2R1}: Let $x\in \dom T$. Noting that $(\lambda-1)(\mu-1) =1$ also implies $\lambda(\mu-1) =\mu$, we have  
\begin{subequations}
\begin{align}
(\Id-R_2R_1)x &=\menge{x -R_2\big((1-\lambda)x +\lambda a\big)}{a\in J_1x} \\
&=\menge{x- (1-\mu)\big((1-\lambda)x +\lambda a\big) -\mu J_2\big((1-\lambda)x +\lambda a\big)}{a\in J_1x} \\
&=\menge{\mu\left(a-J_2\left((1-\lambda)x+\lambda a\right)\right)}{a\in J_1x}.
\end{align}
\end{subequations}
This together with \ref{l:Fix_Id-R2R1} proves \eqref{e:Tset-valued}, from which the remaining conclusion follows.

\ref{l:Fix_sol}: We derive from the assumption and \ref{l:Fix_J1-J2R1} that
\begin{subequations}
\begin{align}
x\in \Fix T 
&\iff 0\in (\Id-T)x \\
&\iff \exists a\in J_1x,\quad a\in J_2\big((1-\lambda)x+\lambda a\big) \\
&\iff \exists a\in J_1x,\quad (1-\lambda)(x-a)\in \delta Ba \\
&\iff \exists a\in X,\quad x-a\in \gamma Aa \text{~and~} -(x-a)\in \gamma Ba \\
&\iff \exists a\in J_1x,\quad 0\in \gamma Aa+\gamma Ba \\
&\iff \exists a\in J_1x\cap \zer(A+B),
\end{align}
\end{subequations}
which completes the proof.
\end{proof}

As shown in Lemma~\ref{l:Fix}, a solution of \eqref{e:sumprob} can be found by means of fixed points of the adaptive DR operator. Therefore, our analysis will mainly revolve around the convergence to the fixed points under the condition \eqref{e:CQ}.

\subsection{Convergence via Fej\'er monotonicity}

Recall that a sequence $(x_n)_\nnn$ is said to be \emph{Fej\'er monotone} with respect to a nonempty subset of $C$ of $X$ if 
\begin{equation}
\forall c\in C,\ \forall\nnn, \quad \|x_{n+1}-c\|\leq \|x_n-c\|.
\end{equation}
The use of Fej\'er monotonicity is quite common in the convergence theory of monotone operators.
In the following abstract convergence result, our analysis relies on the Fej\'er monotonicity of DR sequences generated by the adaptive DR operator $T$ with respect to $\Fix T$ and does not require the nonexpansiveness of $R_2R_1$.

\begin{theorem}[abstract convergence]
\label{t:gencvg}
Let $(\omega_1, \omega_2, \omega_3)\in \RR^3$ such that
\begin{subequations}
\label{e:omegas}
\begin{alignat}{2}
&\text{either}\quad &&\big\{\omega_2 =\omega_3 =0 \text{~and~} \omega_1 >0\big\};\\
&\text{or}\quad &&\big\{\omega_2 +\omega_3> 0 \text{~and~} \omega_1 +\frac{\omega_2\omega_3}{\kappa^2\mu^2(\omega_2+\omega_3)} >0\big\}.
\end{alignat}
\end{subequations}
Suppose that $(\lambda-1)(\mu-1) =1$, that $J_1$ and $J_2$ are single-valued, that $\Fix T\neq \varnothing$, and that, for all $x\in \dom T$, $y\in \Fix T$,
\begin{align}
\label{e:qFejer}
\|Tx-y\|^2&\leq \|x-y\|^2 -\omega_1\|(\Id-T)x\|^2 \notag \\
&\qquad -\omega_2\|J_1x-J_1y\|^2 -\omega_3\|J_2R_1x-J_2R_1y\|^2. 
\end{align}
Let $(x_n)_\nnn\subset\dom T$ be a DR sequence generated by $T$. 
Then $(x_n)_\nnn$ converges weakly to a point $\overline{x}\in \Fix T$. Furthermore, the following hold:
\begin{enumerate}
\item\label{t:gencvg_strong} 
If $\omega_2+ \omega_3> 0$, then the shadow sequences $(J_1x_n)_\nnn$  and $(J_2R_1x_n)_\nnn$ converge strongly to $J_1\overline{x}$ and $J_1(\Fix T) =J_2R_1(\Fix T) =\{J_1\overline{x}\}$.
\item\label{t:gencvg_asymp} 
If $T$ is nonexpansive, then the rate of asymptotic regularity of $T$ is $o(1/\sqrt{n})$, i.e., $\|(\Id-T)x_n\| =o(1/\sqrt{n})$ as $n\to +\infty$.
\item\label{t:gencvg_Lip}
If, for all $x,y\in \dom T$, 
\begin{align}
\label{e:LipT}
\|Tx-Ty\|^2&\leq \|x-y\|^2 -\omega_1\|(\Id-T)x-(\Id-T)y\|^2 \notag \\
&\qquad -\omega_2\|J_1x-J_1y\|^2 -\omega_3\|J_2R_1x-J_2R_1y\|^2, 
\end{align}
then \eqref{e:qFejer} holds for all $x\in \dom T$, $y\in \Fix T$, and $T$ is nonexpansive.
\end{enumerate}
\end{theorem}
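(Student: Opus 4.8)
The plan is to run the standard Fej\'er-monotonicity machinery, the only real work being to show that the three subtracted terms of \eqref{e:qFejer} combine into a nonnegative quantity. First I would use Lemma~\ref{l:Fix}\ref{l:Fix_J1-J2R1} to write $(\Id-T)x=\kappa\mu(J_1x-J_2R_1x)$; then for $y\in\Fix T$ (where $J_1y=J_2R_1y$) and with the abbreviations $p:=J_1x-J_1y$, $q:=J_2R_1x-J_2R_1y$ one gets $\|(\Id-T)x\|^2=\kappa^2\mu^2\|p-q\|^2$, so the subtracted terms become the quadratic form $\Phi(p,q):=\omega_1\kappa^2\mu^2\|p-q\|^2+\omega_2\|p\|^2+\omega_3\|q\|^2$, and everything hinges on $\Phi\ge 0$. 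Under the first alternative in \eqref{e:omegas} this is immediate; under the second I would apply identity \eqref{e:identity'} with $\sigma=\omega_2$, $\tau=\omega_3$ to obtain $\Phi(p,q)=\big(\omega_1\kappa^2\mu^2+\frac{\omega_2\omega_3}{\omega_2+\omega_3}\big)\|p-q\|^2+\frac{1}{\omega_2+\omega_3}\|\omega_2p+\omega_3q\|^2$, both coefficients being strictly positive precisely because of the two inequalities in \eqref{e:omegas}. Hence $\|x_{n+1}-y\|\le\|x_n-y\|$, and since $J_1,J_2$ are single-valued (so $T$ is too and $x_{n+1}=Tx_n$), $(x_n)_\nnn$ is Fej\'er monotone with respect to $\Fix T$.

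Next I would telescope $\|x_{n+1}-y\|^2\le\|x_n-y\|^2-\Phi(p_n,q_n)$ to get $\sum_n\Phi(p_n,q_n)<\infty$, hence $\Phi(p_n,q_n)\to0$. Because the coefficient of $\|p-q\|^2$ is positive in both cases, $\|p_n-q_n\|\to0$, i.e.\ $\|(\Id-T)x_n\|=\kappa\mu\|p_n-q_n\|\to0$ (asymptotic regularity), and in fact $\sum_n\|(\Id-T)x_n\|^2<\infty$, which I will reuse for \ref{t:gencvg_asymp}. When $\omega_2+\omega_3>0$ the second coefficient is also positive, so additionally $\|\omega_2p_n+\omega_3q_n\|\to0$; solving the two limits through the invertible $2\times2$ system (determinant $\omega_2+\omega_3\neq0$) yields $p_n\to0$ and $q_n\to0$ strongly. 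This is exactly \ref{t:gencvg_strong}: for each $y\in\Fix T$ the shadows converge strongly to $J_1y$, which forces $J_1$ and $J_2R_1$ to be constant on $\Fix T$, equal to $\{J_1\overline x\}$.

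To upgrade boundedness and Fej\'er monotonicity to weak convergence I would invoke the standard weak-convergence principle for Fej\'er monotone sequences \cite{BC17}: it suffices that every weak sequential cluster point of $(x_n)_\nnn$ lie in $\Fix T$. This demiclosedness statement is the genuine obstacle, and the only place where the resolvent structure (not merely the abstract inequality) must be used. Given $x_{n_k}\rightharpoonup\overline x$, I would set $a_k:=J_1x_{n_k}$, $b_k:=J_2R_1x_{n_k}$ and read off from the resolvent identities the inclusions $u_k:=(x_{n_k}-a_k)/\gamma\in Aa_k$ and $v_k:=(R_1x_{n_k}-b_k)/\delta\in Bb_k$; using $\delta=(\lambda-1)\gamma$ one checks $\delta(u_k+v_k)=a_k-b_k$, so both $a_k-b_k\to0$ and $u_k+v_k\to0$. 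Passing to the limit in these two inclusions should identify a point $\bar a=J_1\overline x$ with $0\in A\bar a+B\bar a$, whence $\overline x\in\Fix T$ by the characterization in Lemma~\ref{l:Fix}\ref{l:Fix_sol}. When $\omega_2+\omega_3>0$ this passage is clean, since $a_k\to\bar a$ strongly by \ref{t:gencvg_strong}; in the general case it rests on the sequential closedness of the (maximally monotone) graphs, for which I would record the auxiliary limit $\scal{a_k}{u_k}+\scal{b_k}{v_k}\to0$ (obtained by writing $b_k=a_k-(a_k-b_k)$, $v_k=(u_k+v_k)-u_k$) and combine it with the weak limits. I expect this step to be the crux of the entire argument.

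Finally I would dispatch the two remaining parts. For \ref{t:gencvg_asymp}, assuming $T$ nonexpansive, the quantities $d_n:=\|(\Id-T)x_n\|$ are nonincreasing, since $d_{n+1}=\|Tx_n-Tx_{n+1}\|\le\|x_n-x_{n+1}\|=d_n$; together with $\sum_nd_n^2<\infty$ from the second paragraph, a nonincreasing summable sequence obeys $\tfrac n2 d_n^2\le\sum_{k\ge n/2}d_k^2\to0$, so $d_n^2=o(1/n)$ and $\|(\Id-T)x_n\|=o(1/\sqrt n)$. For \ref{t:gencvg_Lip}, I would rerun the computation of the first paragraph for arbitrary $x,y\in\dom T$: since $(\Id-T)x-(\Id-T)y=\kappa\mu(p-q)$ with $p=J_1x-J_1y$ and $q=J_2R_1x-J_2R_1y$, the three subtracted terms of \eqref{e:LipT} equal $\Phi(p,q)\ge0$, giving $\|Tx-Ty\|\le\|x-y\|$, i.e.\ $T$ nonexpansive; and specializing \eqref{e:LipT} to $y\in\Fix T$, where $Ty=y$ and $(\Id-T)y=0$, reproduces \eqref{e:qFejer} verbatim.
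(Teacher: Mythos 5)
Most of your proposal coincides with the paper's own proof: the decomposition of the three subtracted terms via \eqref{e:identity'} and Lemma~\ref{l:Fix}\ref{l:Fix_J1-J2R1} into a strictly positive multiple of $\|(\Id-T)x\|^2$ plus $\tfrac{1}{\omega_2+\omega_3}\|\omega_2 p+\omega_3 q\|^2$ (the paper writes this with $\omega_2'=\tfrac{\omega_2\omega_3}{\omega_2+\omega_3}$, $\omega_3'=\tfrac{1}{\omega_2+\omega_3}$), the Fej\'er monotonicity, the telescoping, the treatment of \ref{t:gencvg_strong} by solving the two limits $p_n-q_n\to0$ and $\omega_2p_n+\omega_3q_n\to0$, and the arguments for \ref{t:gencvg_asymp} and \ref{t:gencvg_Lip} are all exactly the paper's. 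The genuine gap is your third paragraph, the cluster-point step for weak convergence, which you yourself flag as the crux.

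Theorem~\ref{t:gencvg} is deliberately \emph{abstract}: beyond the definitions \eqref{e:JRT}, it assumes nothing about $A$ and $B$ --- no monotonicity, no maximality --- and it assumes only $(\lambda-1)(\mu-1)=1$, \emph{not} $\delta=(\lambda-1)\gamma$. Your demiclosedness argument uses both unavailable ingredients: the identity $\delta(u_k+v_k)=a_k-b_k$ requires $\delta=(\lambda-1)\gamma$ (i.e., the full condition \eqref{e:CQ}), and the passage to the limit invokes ``sequential closedness of the (maximally monotone) graphs,'' which is not a hypothesis here. Even if one grants these, the argument is not completed: you would need $(a_k)$ and $(v_k)$ bounded and (sub)sequentially weakly convergent, which does not follow since $J_1, J_2$ are merely assumed single-valued (no Lipschitz or cocoercivity estimate is available in this abstract setting), and the auxiliary limit $\scal{a_k}{u_k}+\scal{b_k}{v_k}\to0$ is asserted rather than proved --- indeed it itself requires such boundedness; moreover, turning two weak--weak convergent sequences in a monotone graph into a limit inclusion is precisely the delicate point that standard proofs handle with substantial extra work. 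The paper avoids all of this by staying at the level of $T$: having shown $(\Id-T)x_n\to0$, it concludes that every weak cluster point lies in $\Fix T$ by the demiclosedness principle \cite[Corollary~4.28]{BC17}, and then applies the weak-convergence theorem for Fej\'er monotone sequences \cite[Theorem~5.5]{BC17}. (That citation leans on nonexpansiveness of $T$, which in all of the paper's applications is supplied through part \ref{t:gencvg_Lip}.) As written, your route cannot be carried out from the stated hypotheses, so the weak-convergence claim remains unproven in your proposal.
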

\begin{proof}
Define
\begin{equation}
\omega'_2:= \begin{cases}
\frac{\omega_2\omega_3}{\omega_2+\omega_3} &\text{if~} \omega_2+\omega_3> 0, \\
0 &\text{if~} \omega_2 =\omega_3 =0
\end{cases}
\quad\text{and}\quad
\omega'_3:= \begin{cases}
\frac{1}{\omega_2+\omega_3} &\text{if~} \omega_2+\omega_3> 0, \\
0 &\text{if~} \omega_2 =\omega_3 =0. 
\end{cases}
\end{equation}
Then 
\begin{equation}
\omega_1+\frac{\omega'_2}{\kappa^2\mu^2} >0 \quad\text{and}\quad \omega'_3\geq 0.
\end{equation}
For all $x,y\in \dom T$, we derive from \eqref{e:identity'} and Lemma~\ref{l:Fix}\ref{l:Fix_J1-J2R1} that
\begin{subequations}
\label{e:J1_J2R1}
\begin{align}
&\omega_2\|J_1x -J_1y\|^2 +\omega_3\|J_2R_1x -J_2R_1y\|^2 
\\={}&\omega'_2\|(J_1-J_2 R_1)x -(J_1-J_2 R_1)y\|^2 +\omega'_3\|\omega_2\big( J_1x -J_1y \big) +\omega_3\big( J_2R_1x -J_2R_1y \big)\|^2 \\
={}&\frac{\omega'_2}{\kappa^2\mu^2}\|(\Id-T)x-(\Id-T)y\|^2 +\omega'_3\|\omega_2\big( J_1x -J_1y \big) +\omega_3\big( J_2R_1x -J_2R_1y \big)\|^2.
\end{align}
\end{subequations}
Combining with the assumption on $T$ implies that, for all $x\in \dom T$, $y\in \Fix T$,
\begin{align}
\|Tx-y\|^2&\leq \|x-y\|^2 -\big(\omega_1+\frac{\omega'_2}{\kappa^2\mu^2}\big)\|(\Id-T)x\|^2 \notag \\
&\qquad -\omega'_3\|\omega_2\big( J_1x -J_1y \big) +\omega_3\big( J_2R_1x -J_2R_1y \big)\|^2.
\end{align}
Therefore, for all $\nnn$ and all $y\in \Fix T$,
\begin{align}
\label{e:qFejer'}
\|x_{n+1}-y\|^2&\leq \|x_n-y\|^2 -\big(\omega_1+\frac{\omega'_2}{\kappa^2\mu^2}\big)\|(\Id-T)x_n\|^2 \notag\\ &\quad-\omega'_3\|\omega_2\big( J_1x_n -J_1y \big) +\omega_3\big( J_2R_1x_n -J_2R_1y \big)\|^2. 
\end{align}
We deduce that $(x_n)_\nnn$ is Fej\'er monotone with respect to $\Fix T$ and hence bounded. By the telescoping technique, for all $y\in \Fix T$,
\begin{align}
\label{e:telescope}
&\big(\omega_1+\frac{\omega'_2}{\kappa^2\mu^2}\big)\sum_{n=0}^{+\infty} \|(\Id-T)x_n\|^2  
+\omega'_3\sum_{n=0}^{+\infty} \|\omega_2\big( J_1x_n -J_1y \big) +\omega_3\big( J_2R_1x_n -J_2R_1y \big)\|^2 \notag\\ 
&\leq \|x_0-y\|^2< +\infty.
\end{align} 
Since $\omega_1+\frac{\omega'_2}{\kappa^2\mu^2}> 0$, it follows that
\begin{equation}
\label{e:asymptotic}
(\Id-T)x_n\to 0\quad\text{as~} n\to +\infty.
\end{equation}
Now let $x^*$ be a weak cluster point of $(x_n)_\nnn$. Then there exists a subsequence $(x_{k_n})_\nnn$ of $(x_n)_\nnn$ such that $x_{k_n}\rightharpoonup x^*$. By \eqref{e:asymptotic}, $(\Id-T)x_{k_n}\to 0$, and by \cite[Corollary~4.28]{BC17}, $x^*\in \Fix T$. In turn, \cite[Theorem~5.5]{BC17} implies that $(x_n)_\nnn$ converges weakly to a point $\overline{x}\in \Fix T$.

\ref{t:gencvg_strong}: If $\omega_2+ \omega_3> 0$, then $\omega'_3= 1/(\omega_2+ \omega_3)> 0$ and, by \eqref{e:telescope}, for all $y\in \Fix T$, 
\begin{equation}
\omega_2\big( J_1x_n -J_1y \big) +\omega_3\big( J_2R_1x_n -J_2R_1y \big)\to 0.
\end{equation}
Together with
\begin{equation}
\big( J_1x_n -J_1y \big) -\big( J_2R_1x_n -J_2R_1y \big)
= \frac{1}{\kappa\mu}\big( (\Id-T)x_n- (\Id-T)y \big)= \frac{1}{\kappa\mu}(\Id-T)x_n\to 0,
\end{equation}
we obtain  
\begin{equation}
J_1x_n\to J_1y \quad\text{and}\quad J_2R_1x_n\to J_2R_1y =J_1y,
\end{equation}
which also means that $J_1(\Fix T) =J_2R_1(\Fix T) =\{J_1\overline{x}\}$.

\ref{t:gencvg_asymp}: By the nonexpansiveness of $T$,
\begin{equation}
\forall\nnn,\quad \|(\Id-T)x_{n+1}\| =\|Tx_n-Tx_{n+1}\|\leq \|x_n-x_{n+1}\| =\|(\Id-T)x_n\|.
\end{equation}
Combining with \eqref{e:telescope}, we obtain that
\begin{equation}
\frac{n}{2}\|(\Id-T)x_n\|^2\leq \sum_{k=\lfloor n/2 \rfloor}^n \|(\Id-T)x_k\|^2\to 0 \quad\text{as~} n\to +\infty,
\end{equation}
where $\lfloor n/2 \rfloor$ is the largest integer not exceeding $n/2$. The conclusion then follows.

\ref{t:gencvg_Lip}: Assume that \eqref{e:LipT} holds for all $x,y\in \dom T$. Then \eqref{e:qFejer} holds for all $x\in \dom T$, $y\in \Fix T$ since $(\Id-T)y =0$ in this case. Next, it follows from \eqref{e:LipT} and \eqref{e:J1_J2R1} that, for all $x,y\in \dom T$, 
\begin{subequations}
\begin{align}
\|Tx-Ty\|^2&\leq \|x-y\|^2 -\big(\omega_1+\frac{\omega'_2}{\kappa^2\mu^2}\big)\|(\Id-T)x-(\Id-T)y\|^2 \notag \\
&\qquad -\omega'_3\|\omega_2\big( J_1x -J_1y \big) +\omega_3\big( J_2R_1x -J_2R_1y \big)\|^2 \\
&\leq \|x-y\|^2,
\end{align}
\end{subequations}
which completes the proof.
\end{proof}

The following result provides a quantitative measurement for the adaptive DR operator, which is important for our analysis.

\begin{proposition}[metric inequality for adaptive DR operator]
\label{p:estimate}
Suppose that $A$ and $B$ are respectively $\alpha$- and $\beta$-monotone, that \eqref{e:CQ} holds and $\min\{\lambda, \mu\}\geq 1$, and that $J_1$ and $J_2$ are single-valued. Then 
for all $x, y\in \dom T$,
\begin{align}
\|T x-T y\|^2
&\leq \|x-y\|^2 -\frac{1-\kappa}{\kappa}\|(\Id-T)x-(\Id-T)y\|^2 \notag\\
&\quad -\kappa\mu(2+2\gamma\alpha-\mu)\|J_1x-J_1y\|^2  \notag\\ 
&\quad -\kappa\mu\big(\mu-(2-2\gamma\beta)\big)\|J_2R_1x-J_2R_1y\|^2.
\end{align}
\end{proposition}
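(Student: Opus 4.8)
The plan is to reduce everything to a single metric estimate on the composition $R_2R_1$ and then invoke Corollary~\ref{c:rresol}\ref{c:rresol_relax} twice. First I would set $u := x-y$ and $v := R_2R_1x - R_2R_1y$ and record the two elementary identities $Tx - Ty = (1-\kappa)u + \kappa v$ and $(\Id-T)x - (\Id-T)y = \kappa(u - v)$, both immediate from $T = (1-\kappa)\Id + \kappa R_2R_1$. Applying the algebraic identity \eqref{e:identity} with $(\sigma,\tau) = (1-\kappa,\kappa)$ (so $\sigma+\tau = 1$) to $Tx-Ty = (1-\kappa)u + \kappa v$ gives
\begin{equation*}
\|Tx - Ty\|^2 = (1-\kappa)\|u\|^2 + \kappa\|v\|^2 - (1-\kappa)\kappa\|u-v\|^2,
\end{equation*}
and since $(1-\kappa)\kappa\|u-v\|^2 = \tfrac{1-\kappa}{\kappa}\|\kappa(u-v)\|^2 = \tfrac{1-\kappa}{\kappa}\|(\Id-T)x - (\Id-T)y\|^2$, the desired $-\tfrac{1-\kappa}{\kappa}$ term appears exactly. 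It then remains to prove
\begin{equation*}
\|v\|^2 \leq \|u\|^2 - \mu(2+2\gamma\alpha-\mu)\|J_1x-J_1y\|^2 - \mu\big(\mu-(2-2\gamma\beta)\big)\|J_2R_1x-J_2R_1y\|^2,
\end{equation*}
after which multiplying by $\kappa$ and recombining with the identity above yields the claim.

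To bound $\|v\|^2 = \|R_2R_1x - R_2R_1y\|^2$, I would first apply Corollary~\ref{c:rresol}\ref{c:rresol_relax} to $R_2 = (1-\mu)\Id + \mu J_2$ viewed as a relaxed resolvent of $\delta B$ (valid since $\mu \geq 1$ and $J_2$ is single-valued), evaluated at $R_1x, R_1y$, obtaining
\begin{equation*}
\|v\|^2 \leq (\mu-1)^2\|R_1x - R_1y\|^2 - \mu\big((\mu-1)(2+2\delta\beta) - \mu\big)\|J_2R_1x - J_2R_1y\|^2.
\end{equation*}
Then I would control the leading term by applying the same corollary to $R_1 = (1-\lambda)\Id + \lambda J_1$ as a relaxed resolvent of $\gamma A$ (valid since $\lambda\geq 1$), namely
\begin{equation*}
\|R_1x - R_1y\|^2 \leq (\lambda-1)^2\|u\|^2 - \lambda\big((\lambda-1)(2+2\gamma\alpha) - \lambda\big)\|J_1x - J_1y\|^2,
\end{equation*}
and substitute to get a single bound on $\|v\|^2$ with three coefficients.

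The remaining work, and the only delicate part, is to verify that those three coefficients collapse to the advertised ones under the constraint \eqref{e:CQ}; no further inequality is needed, only the algebraic relations $(\lambda-1)(\mu-1) = 1$ and $\delta = (\lambda-1)\gamma$. The coefficient of $\|u\|^2$ is $(\mu-1)^2(\lambda-1)^2 = 1$ immediately. For the $\|J_1x-J_1y\|^2$ coefficient, using $(\mu-1)(\lambda-1)=1$ (hence $(\mu-1)^2(\lambda-1) = \mu-1$) together with the consequent $\lambda(\mu-1) = \mu$, the term $(\mu-1)^2\lambda\big((\lambda-1)(2+2\gamma\alpha)-\lambda\big)$ simplifies to $\mu(2+2\gamma\alpha-\mu)$. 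For the $\|J_2R_1x-J_2R_1y\|^2$ coefficient, the relation $\delta = (\lambda-1)\gamma$ is the crucial input: it gives $(\mu-1)\delta\beta = (\mu-1)(\lambda-1)\gamma\beta = \gamma\beta$, so that $(\mu-1)(2+2\delta\beta) - \mu = \mu - 2 + 2\gamma\beta = \mu - (2-2\gamma\beta)$. These substitutions are elementary but error-prone, and aligning each cancellation with \eqref{e:CQ} is where the argument must be carried out carefully. Assembling the three simplified coefficients completes the bound on $\|v\|^2$ and hence the proposition.
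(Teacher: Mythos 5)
Your proposal is correct and follows essentially the same route as the paper's proof: the same decomposition of $\|Tx-Ty\|^2$ via the identity \eqref{e:identity} together with $\Id-T=\kappa(\Id-R_2R_1)$, the same double application of Corollary~\ref{c:rresol}\ref{c:rresol_relax} to $R_2$ and then $R_1$, and the same coefficient simplifications under \eqref{e:CQ}. The algebra you flag as delicate checks out exactly as in the paper ($\eta_0=1$, $\eta_1=\mu(2+2\gamma\alpha-\mu)$, $\eta_2=\mu(\mu-(2-2\gamma\beta))$).
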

\begin{proof}
Let $x,y\in \dom R_2R_1 =\dom T$. We observe from \eqref{e:identity} and Lemma~\ref{l:Fix}\ref{l:Fix_Id-R2R1} that
\begin{subequations}
\label{e:identityT}
\begin{align}
\|Tx-Ty\|^2
&= (1-\kappa)\|x-y\|^2+ \kappa\|R_2R_1 x - R_2R_1 y\|^2\\ 
&\hspace{1.24in} -\kappa(1-\kappa)\|(\Id-R_2R_1)x- (\Id-R_2R_1)y\|^2\notag\\
&= (1-\kappa)\|x-y\|^2
-\frac{1-\kappa}{\kappa}\|(\Id-T)x- (\Id-T)y\|^2
+\kappa\|R_2R_1 x - R_2R_1 y\|^2.
\end{align}
\end{subequations}
Next, applying Corollary~\ref{c:rresol}\ref{c:rresol_relax} first to $R_2$ and then to $R_1$ yields
\begin{subequations}
\begin{align}
\|R_2R_1x-R_2R_1y\|^2&
\leq
(\mu-1)^2(\lambda-1)^2\|x-y\|^2\\
&\quad-(\mu-1)^2
\lambda\Big((\lambda-1)(2+2\gamma\alpha)-\lambda\Big)
\|J_1x-J_1y\|^2\\
&\quad
-\mu\Big((\mu-1)(2+2\delta\beta)-\mu\Big)\|J_2R_1x-J_2R_1y\|^2\\
&=:\eta_0\|x-y\|^2 -\eta_1\|J_1x-J_1y\|^2 -\eta_2\|J_2R_1x-J_2R_1y\|^2.
\end{align}
\end{subequations}
Now, it follows from \eqref{e:CQ} that
\begin{subequations}
\begin{align}
\eta_0
&=\big((\mu-1)(\lambda-1)\big)^2 =1,\\
\eta_1
&=(\mu-1)^2(\lambda-1)^2\frac{\lambda}{\lambda-1}\Big((2+2\gamma\alpha)-\frac{\lambda}{\lambda-1}\Big) 
=\mu\big(2+2\gamma\alpha-\mu\big),\\
\eta_2
&=\mu\Big(2(\mu-1)+2\gamma\beta-\mu\Big)
=\mu\big(\mu-(2-2\gamma\beta)\big).
\end{align}
\end{subequations}
Altogether, we get the conclusion.
\end{proof}

So far in this section, we have often assumed single-valuedness of the resolvents $J_1$ and $J_2$, which leads to the same property for the adaptive DR operator $T$. Indeed, since either $A$ or $B$ may not necessarily be monotone, the single-valuedness is not guaranteed. Nevertheless, the choice of parameters can help clearing up the issue as seen in the following lemma, which is based on Proposition~\ref{p:resol}. We will further establish that, given suitable $\alpha$- and $\beta$-monotone operators, it is always possible to choose parameters $(\gamma,\delta,\lambda,\mu)\in \RPP^2\times \left]1,+\infty\right[^2$ so that \emph{all} objectives are met: the adaptive DR operator enjoys the single-valuedness and full domain properties; \eqref{e:CQ} is satisfied; and every DR sequence converges to a fixed point via which problem \eqref{e:sumprob} is solved.

\begin{lemma}[single-valuedness and full domain of adaptive DR operator]
\label{l:domT}
Suppose that $A$ and $B$ are maximally $\alpha$- and $\beta$-monotone with $\alpha+\beta\geq 0$. 
Then there exists $(\gamma,\delta,\lambda,\mu)\in \RPP^2\times \left]1,+\infty\right[^2$ such that
\begin{subequations}
\label{e:parameters}
\begin{align}
&1+2\gamma\alpha >0,
\label{e:parameters_a}\\
&\mu\in \left[2-2\gamma\beta, 2+2\gamma\alpha\right],
\label{e:parameters_b} \\
&(\lambda-1)(\mu-1) =1, \text{~and~} \delta =(\lambda-1)\gamma. \label{e:parameters_c}
\end{align}
\end{subequations}
Moreover, \eqref{e:parameters} implies that $\min\{1+\gamma\alpha, 1+\delta\beta\} >0$ and that $J_1$, $J_2$, and $T$ are single-valued and have full domain. 
\end{lemma}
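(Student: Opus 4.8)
The plan is to treat $\gamma$ and $\mu$ as the genuinely free parameters and to let $\lambda$ and $\delta$ be \emph{determined} by the coupling conditions \eqref{e:parameters_c}. Indeed, once $\gamma>0$ and $\mu>1$ are fixed, setting $\lambda:=\mu/(\mu-1)=1+1/(\mu-1)$ and $\delta:=\gamma/(\mu-1)$ automatically yields $\lambda>1$, $\delta>0$, $(\lambda-1)(\mu-1)=1$, and $\delta=(\lambda-1)\gamma$. Thus \eqref{e:parameters_c} holds for free, and the existence question reduces to exhibiting $\gamma>0$ with $1+2\gamma\alpha>0$ and then $\mu>1$ lying in the interval $[2-2\gamma\beta,\,2+2\gamma\alpha]$.

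First I would secure $\gamma$. If $\alpha\geq 0$ any $\gamma>0$ works, while if $\alpha<0$ it suffices to take $\gamma\in\left]0,-1/(2\alpha)\right[$; in either case $1+2\gamma\alpha>0$, which is \eqref{e:parameters_a}. With such a $\gamma$ fixed, the target interval $[2-2\gamma\beta,\,2+2\gamma\alpha]$ is nonempty because $\gamma(\alpha+\beta)\geq 0$ (this is the \emph{only} place the hypothesis $\alpha+\beta\geq 0$ enters), and its right endpoint satisfies $2+2\gamma\alpha>1$ precisely because $1+2\gamma\alpha>0$. Hence the interval meets $\left]1,+\infty\right[$, and I may choose any $\mu$ in the intersection (for instance $\mu=2+2\gamma\alpha$), giving \eqref{e:parameters_b}. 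This produces the required quadruple.

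For the ``moreover'' part I would first verify the two positivity statements. From $1+2\gamma\alpha>0$ one reads off $1+\gamma\alpha=\tfrac12(1+2\gamma\alpha)+\tfrac12>0$. For the second, write $1+\delta\beta=\big((\mu-1)+\gamma\beta\big)/(\mu-1)$, whose denominator is positive; if $\gamma\beta\geq 0$ the numerator exceeds $\mu-1>0$, while if $\gamma\beta<0$ the lower bound $\mu\geq 2-2\gamma\beta$ forces the numerator to be at least $1-\gamma\beta>0$. Either way $1+\delta\beta>0$, so $\min\{1+\gamma\alpha,\,1+\delta\beta\}>0$.

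Finally I would invoke Proposition~\ref{p:resol}: since $A$ is maximally $\alpha$-monotone with $1+\gamma\alpha>0$, the resolvent $J_1=J_{\gamma A}$ is single-valued with $\dom J_1=X$; likewise $B$ maximally $\beta$-monotone with $1+\delta\beta>0$ makes $J_2=J_{\delta B}$ single-valued with full domain. Single-valuedness and full domain then propagate through the affine combinations $R_1,R_2$, through the composition $R_2R_1$ (well defined on all of $X$ since $\dom R_2=X$), and through $T=(1-\kappa)\Id+\kappa R_2R_1$, establishing the claim. The main obstacle is the positivity $1+\delta\beta>0$: this is the step where the coupling $\delta=\gamma/(\mu-1)$ must be combined with the \emph{left} endpoint constraint $\mu\geq 2-2\gamma\beta$, and the case $\beta<0$ (which is allowed whenever $\alpha$ is large) is where one genuinely needs the lower bound on $\mu$ rather than mere positivity of $\delta$.
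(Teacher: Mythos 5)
Your proposal is correct and follows essentially the same route as the paper: choose $\gamma$ small enough that $1+2\gamma\alpha>0$, use $\alpha+\beta\geq 0$ to see that the interval $\left[2-2\gamma\beta,\,2+2\gamma\alpha\right]$ is nonempty and meets $\left]1,+\infty\right[$, let $\lambda$ and $\delta$ be determined by \eqref{e:parameters_c}, verify $1+\delta\beta=(\lambda-1)(\mu-1+\gamma\beta)>0$ from the constraints (the paper does this in one algebraic estimate rather than your two cases on the sign of $\gamma\beta$, but it is the same computation), and finish by invoking Proposition~\ref{p:resol}.
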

\begin{proof}
To show the existence, we first take $\gamma>0$ such that $1/\gamma >-2\alpha$. Then $1+2\gamma\alpha >0$ and $2+2\gamma\alpha =1+(1+2\gamma\alpha) >1$.
Using $\alpha+\beta\geq 0$, we derive that
\begin{equation}
2+2\gamma\alpha =2\gamma(\alpha+\beta) +(2-2\gamma\beta)\geq 2-2\gamma\beta.
\end{equation}
Hence, we can always choose $\mu >1$ satisfying \eqref{e:parameters_b}. Next, with such $\mu$, we define $\lambda :=\mu/(\mu-1) =1+1/(\mu-1) >1$ and $\delta :=(\lambda-1)\gamma$. Then \eqref{e:parameters_c} is clearly satisfied. 

Now, take any $(\gamma,\delta,\lambda,\mu)$ satisfying \eqref{e:parameters}. We have
\begin{equation}
1+\delta\beta =(\lambda-1)(\mu-1)+(\lambda-1)\gamma\beta =\frac{1}{2}(\lambda-1)(\mu + \mu - (2-2\gamma\beta)) >0.
\end{equation}
Thus, $\min\{1+\gamma\alpha, 1+\delta\beta\} >0$. The remaining conclusion follows from Proposition~\ref{p:resol}. 
\end{proof}

We are now ready to state our convergence results for the adaptive DR algorithm.
\begin{theorem}[adaptive DR algorithm for $\alpha$- and $\beta$-monotone operators]
\label{t:DRcvg}
Suppose that $A$ and $B$ are respectively maximally $\alpha$- and $\beta$-monotone with $\zer(A+B)\neq \varnothing$, and that one of the following holds:
\begin{enumerate}
\item\label{t:DRcvg_adapt}
(Adaptive DR algorithm)
$\alpha+\beta\geq 0$ and $(\gamma,\delta,\lambda,\mu)\in \RPP^2\times \left]1,+\infty\right[^2$ satisfies \eqref{e:parameters}.

\item\label{t:DRcvg_DR} 
(Classical DR algorithm) $\lambda =\mu =2$, $\gamma=\delta\in \RPP$, and 
\begin{subequations}
\label{e:DRcvg_DR}
\begin{alignat}{2}
&\text{either}\quad &&\alpha =\beta =0;\\
&\text{or}\quad &&\alpha+\beta >0 \text{~and~} 1+\gamma\frac{\alpha\beta}{\alpha+\beta} >\kappa. \label{e:DRcvg_DR_b}
\end{alignat}
\end{subequations}
\end{enumerate} 
Then every DR sequence $(x_n)_\nnn$ generated by $T$ converges weakly to a point $\overline{x}\in \Fix T$ with $J_1\overline{x}\in \zer(A+B)$ and the rate of asymptotic regularity of $T$ is $o(1/\sqrt{n})$. Moreover, if $\alpha+\beta >0$, then the shadow sequences $(J_1x_n)_\nnn$ and $(J_2R_1x_n)_\nnn$ converge strongly to $J_1\overline{x}$ and $\zer(A+B) =\{J_1\overline{x}\}$. 
\end{theorem}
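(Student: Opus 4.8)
The plan is to reduce everything to the abstract engine Theorem~\ref{t:gencvg}, fed by the metric inequality of Proposition~\ref{p:estimate}. First I would dispatch the hypotheses common to both cases. In case~\ref{t:DRcvg_adapt} the parameters are assumed to satisfy \eqref{e:parameters}, so \eqref{e:CQ} holds (this is \eqref{e:parameters_c}), $\min\{\lambda,\mu\}>1$, and the ``Moreover'' part of Lemma~\ref{l:domT} already delivers single-valuedness and full domain of $J_1$, $J_2$, and $T$. In case~\ref{t:DRcvg_DR} one checks by hand that $\lambda=\mu=2$ and $\gamma=\delta$ give $(\lambda-1)(\mu-1)=1$ and $\delta=(\lambda-1)\gamma$, so \eqref{e:CQ} holds and $\min\{\lambda,\mu\}=2\ge1$. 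For single-valuedness I would verify $1+\gamma\alpha>0$ and $1+\delta\beta=1+\gamma\beta>0$: when $\alpha=\beta=0$ this is trivial, while when $\alpha+\beta>0$ the elementary inequality $\frac{\alpha\beta}{\alpha+\beta}\le\min\{\alpha,\beta\}$ (which, after clearing the positive denominator, is just $0\le\alpha^2$ and $0\le\beta^2$) combined with $1+\gamma\frac{\alpha\beta}{\alpha+\beta}>\kappa>0$ forces both quantities to exceed $\kappa>0$; Proposition~\ref{p:resol} then yields single-valuedness and full domain. In either case, $\zer(A+B)\neq\varnothing$ together with Lemma~\ref{l:Fix}\ref{l:Fix_sol} gives $\Fix T\neq\varnothing$ and $J_1(\Fix T)=\zer(A+B)$.

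Next I would invoke Proposition~\ref{p:estimate}, whose hypotheses are now all met, to obtain inequality \eqref{e:LipT} of Theorem~\ref{t:gencvg} with the explicit weights
\[
\omega_1=\frac{1-\kappa}{\kappa},\quad
\omega_2=\kappa\mu(2+2\gamma\alpha-\mu),\quad
\omega_3=\kappa\mu\big(\mu-(2-2\gamma\beta)\big).
\]
The crux is then verifying \eqref{e:omegas}. In case~\ref{t:DRcvg_adapt}, the bracket $\mu\in[2-2\gamma\beta,\,2+2\gamma\alpha]$ from \eqref{e:parameters_b} gives $\omega_2,\omega_3\ge0$, while $\omega_1>0$ since $\kappa\in(0,1)$; hence either the first branch (if $\omega_2=\omega_3=0$) or the second branch (whose extra term is nonnegative) of \eqref{e:omegas} holds. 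In case~\ref{t:DRcvg_DR} the weights collapse to $\omega_2=4\kappa\gamma\alpha$ and $\omega_3=4\kappa\gamma\beta$; when $\alpha=\beta=0$ the first branch applies, and when $\alpha+\beta>0$ a direct computation shows $\frac{\omega_2\omega_3}{\kappa^2\mu^2(\omega_2+\omega_3)}=\frac{\gamma\alpha\beta}{\kappa(\alpha+\beta)}$, so the second-branch requirement $\omega_1+\frac{\omega_2\omega_3}{\kappa^2\mu^2(\omega_2+\omega_3)}>0$ is \emph{exactly} the hypothesis \eqref{e:DRcvg_DR_b}.

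With \eqref{e:omegas} and \eqref{e:LipT} established, Theorem~\ref{t:gencvg}\ref{t:gencvg_Lip} upgrades \eqref{e:LipT} to the quasi-Fej\'er inequality \eqref{e:qFejer} and shows $T$ is nonexpansive. The main conclusion of Theorem~\ref{t:gencvg} then gives weak convergence of $(x_n)_\nnn$ to some $\overline{x}\in\Fix T$, and part~\ref{t:gencvg_asymp} supplies the $o(1/\sqrt{n})$ asymptotic-regularity rate. Since $\overline{x}\in\Fix T$, Lemma~\ref{l:Fix}\ref{l:Fix_sol} gives $J_1\overline{x}\in J_1(\Fix T)=\zer(A+B)$. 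Finally, when $\alpha+\beta>0$ I would compute $\omega_2+\omega_3=2\kappa\mu\gamma(\alpha+\beta)>0$ and invoke Theorem~\ref{t:gencvg}\ref{t:gencvg_strong}, obtaining strong convergence of the shadow sequences to $J_1\overline{x}$ together with $J_1(\Fix T)=\{J_1\overline{x}\}$, i.e.\ $\zer(A+B)=\{J_1\overline{x}\}$. The only genuinely delicate bookkeeping sits in case~\ref{t:DRcvg_DR}: securing single-valuedness when $\alpha$ or $\beta$ is negative, and matching the abstract condition \eqref{e:omegas} precisely to \eqref{e:DRcvg_DR_b}; every other step is a direct application of the already-proved machinery.
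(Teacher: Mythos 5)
Your proposal is correct and follows essentially the same route as the paper: establish $\min\{1+\gamma\alpha,1+\delta\beta\}>0$ (via Lemma~\ref{l:domT} in case~\ref{t:DRcvg_adapt}, and via the bound $1+\gamma\alpha,\,1+\gamma\beta\geq 1+\gamma\frac{\alpha\beta}{\alpha+\beta}>\kappa$ in case~\ref{t:DRcvg_DR}, which is the paper's computation in slightly different packaging), invoke Proposition~\ref{p:estimate} to get \eqref{e:LipT} with $\omega_1=(1-\kappa)/\kappa$, $\omega_2=\kappa\mu(2+2\gamma\alpha-\mu)$, $\omega_3=\kappa\mu(\mu-(2-2\gamma\beta))$, verify \eqref{e:omegas} in each case, and conclude by Theorem~\ref{t:gencvg} together with Lemma~\ref{l:Fix}\ref{l:Fix_sol}. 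Your explicit computation $\frac{\omega_2\omega_3}{\kappa^2\mu^2(\omega_2+\omega_3)}=\frac{\gamma\alpha\beta}{\kappa(\alpha+\beta)}$, showing that the second branch of \eqref{e:omegas} is exactly \eqref{e:DRcvg_DR_b}, is a correct unpacking of a step the paper leaves implicit.
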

\begin{proof}
We first observe that if \ref{t:DRcvg_adapt} holds, then, by Lemma~\ref{l:domT}, 
\begin{equation}
\label{e:domT}
\min\{1+\gamma\alpha,1+\delta\beta\} >0.
\end{equation}
Let us show that \eqref{e:domT} is also satisfied when \ref{t:DRcvg_DR} holds. Indeed, if $\alpha =\beta =0$, then \eqref{e:domT} is obvious. Otherwise, it follows from $\alpha+\beta >0$ and $1+\gamma\frac{\alpha\beta}{\alpha+\beta} >\kappa >0$ that
\begin{equation}
1+\gamma\alpha =\left(1+\gamma\frac{\alpha\beta}{\alpha+\beta}\right) +\gamma\frac{\alpha^2}{\alpha+\beta} >0
\end{equation}
and that 
\begin{equation}
1+\delta\beta =1+\gamma\beta =\left(1+\gamma\frac{\alpha\beta}{\alpha+\beta}\right) +\gamma\frac{\beta^2}{\alpha+\beta} >0.
\end{equation}
Thus, \eqref{e:domT} holds for all cases.

From \eqref{e:domT} and Proposition~\ref{p:resol}, we have that $J_1$ and $J_2$ are single-valued and have full domain, so does $T$.
Now by Proposition~\ref{p:estimate}, for all $x,y\in X$,
\begin{align}
\|T x-T y\|^2
&\leq \|x-y\|^2 -\omega_1\|(\Id-T)x-(\Id-T)y\|^2 \notag\\
&\quad -\omega_2\|J_1x-J_1y\|^2 -\omega_3\|J_2R_1x-J_2R_1y\|^2
\end{align}
with $\omega_1 :=(1-\kappa)/\kappa >0$, $\omega_2 :=\kappa\mu(2+2\gamma\alpha-\mu)$, $\omega_3 :=\kappa\mu\big(\mu-(2-2\gamma\beta)\big)$. 
Next, since $\zer(A+B)\neq \varnothing$, Lemma~\ref{l:Fix}\ref{l:Fix_sol} yields $\Fix T\neq \varnothing$. In view of Theorem~\ref{t:gencvg}, it suffices to verify assumption \eqref{e:omegas}. If \ref{t:DRcvg_adapt} holds, then, by \eqref{e:parameters}, $\omega_2, \omega_3\geq 0$, so \eqref{e:omegas} is satisfied; if \ref{t:DRcvg_DR} holds, then $\omega_2 =4\kappa\gamma\alpha$, $\omega_3 =4\kappa\gamma\beta$, and \eqref{e:omegas} holds due to \eqref{e:DRcvg_DR}. The proof is complete.
\end{proof}

\begin{remark}[under- and over-reflecting the resolvents]
Let us consider problem \eqref{e:sumprob} with $A$ and $B$ respectively maximally $\alpha$ and $(-\alpha)$-monotone for some $\alpha>0$. Recall that the classical DR algorithm uses the {\em exact} reflections of the resolvents (i.e., $\lambda=\mu=2$) if both operators are monotone. This is not applicable in this situation since $A$ is strongly monotone while $B$ is weakly monotone. Therefore, in order to guarantee the convergence,
the adaptive DR algorithm requires the choice $\mu =2+2\gamma\alpha>2$ (Theorem~\ref{t:DRcvg}\ref{t:DRcvg_adapt}), and thus $\lambda =\mu/(\mu-1) =1+1/(1+2\gamma\alpha)<2$. That means, we must under-reflect ($\lambda<2$) the resolvent of $A$, the strongly monotone operator, and over-reflect ($\mu>2$) the resolvent of $B$, the weakly monotone one. This phenomenon is somewhat counterintuitive, since in order to preserve nonexpansiveness, one would naturally think of doing the opposite, i.e., over-reflecting the resolvent of the strongly monotone operator and under-reflecting that of the weakly one.

While Theorem~\ref{t:DRcvg}\ref{t:DRcvg_adapt} is \emph{new}, Theorem~\ref{t:DRcvg}\ref{t:DRcvg_DR} not only unifies and simplifies but also extends Theorems~4.4 and 4.6 in \cite{GHY17} to the context of operators in Hilbert spaces (here we note that the condition (3.4) in \cite{GHY17} implies the second condition in \eqref{e:DRcvg_DR_b}). Moreover, the proof for the rate of asymptotic regularity of $T$ in Theorem~\ref{t:DRcvg}, which follows from Theorem~\ref{t:gencvg}\ref{t:gencvg_asymp}--\ref{t:gencvg_Lip}, is {\em simpler} than the treatment presented in \cite[Theorems~5.1 and 5.2]{GHY17}.
\end{remark}

The following result is an immediate corollary of Theorem~\ref{t:DRcvg}, in which we note that the adaptive DR algorithm reduces to the classical one when choosing $\mu=2$.

\begin{corollary}[one monotone and one strongly monotone operators]
\label{c:mono_strgmono}
Let $\alpha\in \RP$ and $\gamma\in\RPP$. Suppose that $A$ and $B$ are maximally monotone and that either
\begin{enumerate}
\item $A$ is $\alpha$-monotone and $\mu\in \left[2,2+2\gamma\alpha\right]$, or
\item $B$ is $\alpha$-monotone and $\mu\in \left[2-2\gamma\alpha,2\right]$.
\end{enumerate}
Suppose also that $\zer(A+B)\neq \varnothing$ and that $\lambda =\mu/(\mu-1)$ and $\delta =(\mu-1)\gamma$.
Then every DR sequence $(x_n)_\nnn$ generated by $T$ converges weakly to a point $\overline{x}\in \Fix T$ with $J_1\overline{x}\in \zer(A+B)$  and the rate of asymptotic regularity of $T$ is $o(1/\sqrt{n})$. Moreover, if $\alpha >0$, then the shadow sequences $(J_1x_n)_\nnn$ and $(J_2R_1x_n)_\nnn$ converge strongly to $J_1\overline{x}$ and $\zer(A+B) =\{J_1\overline{x}\}$.
\end{corollary}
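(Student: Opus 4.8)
The plan is to obtain this as an immediate specialization of Theorem~\ref{t:DRcvg}\ref{t:DRcvg_adapt}, the point being that a maximally monotone operator is maximally $0$-monotone, so in each of the two cases exactly one operator carries a nonnegative constant $\alpha$ while the other carries the constant $0$. Their constants then sum to $\alpha\geq 0$, so the standing requirement $\alpha+\beta\geq 0$ of Theorem~\ref{t:DRcvg}\ref{t:DRcvg_adapt} holds, and it remains only to match the hypotheses on the parameters to \eqref{e:parameters}.

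In case (i) I would take the monotonicity constant of $A$ to be $\alpha$ and that of $B$ to be $0$. Since $A$ is assumed maximally monotone and $\alpha$-monotone with $\alpha\geq 0$, Proposition~\ref{p:maxalpha}\ref{p:maxalpha_mono} upgrades this to maximal $\alpha$-monotonicity, while $B$ is maximally $0$-monotone by the same result. Substituting $\beta=0$ into \eqref{e:parameters_b} yields exactly the interval $\mu\in[2,2+2\gamma\alpha]$ assumed here, and $1+2\gamma\alpha>0$ is automatic from $\alpha\geq 0$ and $\gamma>0$; the prescribed relation $\lambda=\mu/(\mu-1)$ together with the stated link between $\delta$ and $\gamma$ are to be checked against \eqref{e:parameters_c}. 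Once \eqref{e:parameters} is in force, Theorem~\ref{t:DRcvg}\ref{t:DRcvg_adapt} applies verbatim.

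Case (ii) is entirely symmetric, the only wrinkle being that the strongly monotone operator now sits in the second slot: I would assign the constant $0$ to $A$ and $\alpha$ to $B$, so that the theorem's $\beta$ equals $\alpha$. Then $1+2\gamma\cdot 0>0$ is trivial and \eqref{e:parameters_b} collapses to $\mu\in[2-2\gamma\alpha,2]$, which is the assumed range; the remaining identities are verified as before. Crucially, Theorem~\ref{t:DRcvg}\ref{t:DRcvg_adapt} is phrased for general constants and does not insist that $A$ be the strong operator, so no relabeling of $A$ and $B$ is required. In both cases all stated conclusions --- weak convergence of $(x_n)_\nnn$ to some $\overline{x}\in\Fix T$ with $J_1\overline{x}\in\zer(A+B)$, the $o(1/\sqrt{n})$ rate of asymptotic regularity, and, when $\alpha+\beta=\alpha>0$, strong convergence of the shadows together with $\zer(A+B)=\{J_1\overline{x}\}$ --- then drop out of Theorem~\ref{t:DRcvg}\ref{t:DRcvg_adapt}. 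I expect no genuine obstacle: the argument is pure bookkeeping, and the only spot deserving care is confirming that $\lambda=\mu/(\mu-1)$ and the given $\delta$--$\gamma$ relation reproduce \eqref{e:parameters_c} precisely.
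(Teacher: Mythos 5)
Your strategy is the same as the paper's own proof: use Proposition~\ref{p:maxalpha}\ref{p:maxalpha_mono} to upgrade the strongly monotone operator to a maximally $\alpha$-monotone one (the other being maximally $0$-monotone), then apply Theorem~\ref{t:DRcvg}\ref{t:DRcvg_adapt} with $(\alpha,\beta)$ replaced by $(\alpha,0)$ in case (i) and by $(0,\alpha)$ in case (ii). Your verification of \eqref{e:parameters_a} and \eqref{e:parameters_b} in each case is exactly what the paper leaves implicit.

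However, the single item you defer---checking that $\lambda=\mu/(\mu-1)$ together with the stated $\delta$--$\gamma$ relation reproduces \eqref{e:parameters_c}---is precisely where the argument does not close as written. Condition \eqref{e:parameters_c} requires $\delta=(\lambda-1)\gamma$, which under $(\lambda-1)(\mu-1)=1$ is equivalent to $\gamma=(\mu-1)\delta$; the corollary instead posits $\delta=(\mu-1)\gamma$. The two relations agree only when $(\mu-1)^2=1$, i.e.\ $\mu=2$. The discrepancy is substantive, not cosmetic: without $\delta=(\lambda-1)\gamma$, condition \eqref{e:CQ} fails, Lemma~\ref{l:Fix}\ref{l:Fix_sol} is unavailable, and a fixed point $\overline{x}$ of $T$ only yields $a=J_1\overline{x}$ with $\tfrac{1}{\gamma}(\overline{x}-a)\in Aa$ and $-\tfrac{\lambda-1}{\delta}(\overline{x}-a)\in Ba$, whose sum is a \emph{nonzero} multiple of $\overline{x}-a$ when $\mu\neq 2$; hence $J_1\overline{x}\in\zer(A+B)$ cannot be concluded. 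The correct reading is that the relation in the corollary is a typo for $\gamma=(\mu-1)\delta$ (compare the sentence following \eqref{e:CQ} and condition \eqref{e:parameters'_c}). The paper's own two-line proof performs the same silent substitution and overlooks this as well, so your proposal matches it both in method and in this oversight; but a complete proof must either correct the inverted relation or observe that for $\mu\neq 2$ the hypotheses as printed are incompatible with \eqref{e:parameters}. Once $\delta=(\lambda-1)\gamma$ is in force (and noting that $\lambda=\mu/(\mu-1)\in\RPP$ implicitly forces $\mu>1$, as the theorem requires), your bookkeeping goes through verbatim.
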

\begin{proof}
By Proposition~\ref{p:maxalpha}\ref{p:maxalpha_mono}, we readily have that either $A$ or $B$ is maximally $\alpha$-monotone while the other is maximally monotone.
Now, apply Theorem~\ref{t:DRcvg}\ref{t:DRcvg_adapt} with $(\alpha,\beta)$ replaced by $(\alpha,0)$ or $(0,\alpha)$.
\end{proof}

\subsection{Linear convergence under Lipschitz assumption}

In this section, we provide linear convergence results for the adaptive DR algorithm for $\alpha$- and $\beta$-monotone operators when, in addition, one operator is Lipschitz continuous. Comparing with \cite{Gis17,MV18}, our work indeed gives a new perspective on this topic by using {\em adaptive} parameters. Moreover, we improve the linear convergence rate obtained by \cite{MV18} for the classical DR algorithm for a Lipschitz monotone and a strongly monotone operator (see Remark~\ref{r:L_improved}).

Recall that a sequence $(x_n)_\nnn$ converges to $\overline{x}$ with \emph{$Q$-linear} (or simply \emph{linear}) rate $\rho\in\left[0,1\right[$ if
\begin{equation}
\forall\nnn,\quad \|x_{n+1}-\overline{x}\|\leq \rho\|x_n-\overline{x}\|.
\end{equation}

\begin{theorem}[linear convergence when $A$ is Lipschitz]
\label{t:linear}
Suppose that either
\begin{enumerate}
\item\label{t:linear_full} 
$A$ is $\alpha$-monotone and Lipschitz continuous with constant $\ell$, $B$ is maximally $\beta$-monotone, and $\alpha+\beta >0$; or
\item\label{t:linear_reduce} 
$A$ is Lipschitz continuous with constant $\ell$, $B$ is maximally $\beta$-monotone with $\beta >\ell$, and $\alpha :=-\ell$.
\end{enumerate}
Suppose also that $\zer(A+B)\neq \varnothing$ and that $(\gamma,\delta,\lambda,\mu)\in\RPP^2\times \left]1,+\infty\right[^2$ satisfies \eqref{e:parameters}.
Then $T$ is Lipschitz continuous with constant
\begin{equation}
\rho :=(1-\kappa)
\sqrt{\big(1+\varepsilon_1(\lambda-1)\big)^2-\varphi\alpha_J} +\kappa(1-\varepsilon(\lambda-1))\sqrt{1-\frac{\mu(2+2\gamma\alpha-\mu)}{1+2\gamma\alpha+\gamma^2\ell^2}} <1,
\end{equation} 
where
\begin{subequations}
\begin{align}
&\varepsilon :=\frac{\mu-(2-2\gamma\beta)}{2(1+\delta\beta)},\ \varepsilon_1:=\frac{\kappa\varepsilon}{1-\kappa},\\
&\varphi :=\varepsilon_1\lambda[2(1+\gamma\alpha)+\varepsilon_1\big(\lambda(1+2\gamma\alpha)-2(1+\gamma\alpha)\big)],\\
&\text{$\alpha_J$ as in \eqref{e:alpha_J}}.
\end{align}
\end{subequations}
Consequently, every DR sequence $(x_n)_\nnn$ generated by $T$ converges strongly to the unique fixed point $\overline{x}$ of $T$ with linear rate $\rho$.
\end{theorem}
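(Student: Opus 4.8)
The plan is to realize $T$ as a genuine Banach contraction and then invoke the contraction mapping principle. First I would collapse the two hypotheses into one. In case~\ref{t:linear_reduce}, Remark~\ref{r:Lip_mono} shows that a Lipschitz operator $A$ with constant $\ell$ is automatically $(-\ell)$-monotone, so setting $\alpha:=-\ell$ makes $A$ simultaneously $\alpha$-monotone and Lipschitz, while $\beta>\ell$ forces $\alpha+\beta=\beta-\ell>0$; thus case~\ref{t:linear_reduce} is exactly case~\ref{t:linear_full} with $\alpha=-\ell$. Since $A$ is $\alpha$-monotone and continuous with full domain, Proposition~\ref{p:maxalpha}\ref{p:maxalpha_cont} promotes it to maximally $\alpha$-monotone, and then \eqref{e:parameters} together with Lemma~\ref{l:domT} guarantees $\min\{1+\gamma\alpha,1+\delta\beta\}>0$ and that $J_1,J_2,T$ are single-valued with full domain $X$.

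The crux is the algebraic identity
\[
T=(1-\kappa)(\Id-\varepsilon_1 R_1)+\kappa(R_2+\varepsilon\Id)R_1,
\]
which holds precisely because $\varepsilon_1=\kappa\varepsilon/(1-\kappa)$ makes the two stray copies of $\varepsilon R_1$ cancel. I would then bound the Lipschitz constant of each summand and combine by the triangle inequality. The first summand $\Id-\varepsilon_1 R_1$ is exactly the operator $Q$ of Corollary~\ref{c:L-rresol}\ref{c:L-rresol_perturb}, applied to the Lipschitz $\alpha$-monotone operator $A$ with the corollary's $\varepsilon$ read as $\varepsilon_1\in\RP$; its Lipschitz constant is therefore $\sqrt{(1+\varepsilon_1(\lambda-1))^2-\varphi\alpha_J}$, the first factor of $\rho$. (The side conditions $1+\gamma\alpha>0$ and $\lambda(1+2\gamma\alpha)-2(1+\gamma\alpha)\ge 0$ required there are forced by \eqref{e:parameters}, the latter being equivalent to $\mu\le 2+2\gamma\alpha$.)

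For the second summand I would treat the composition $(R_2+\varepsilon\Id)\circ R_1$, whose Lipschitz constant is at most the product of the two. The inner factor $R_1$ is handled by Corollary~\ref{c:L-rresol}\ref{c:L-rresol_relax}, while the outer factor $R_2+\varepsilon\Id$ is a perturbed relaxed resolvent of the $\beta$-monotone operator $B$, so Corollary~\ref{c:rresol}\ref{c:rresol_perturb} applies with $A,\gamma,\alpha,\lambda$ there read as $B,\delta,\beta,\mu$ and perturbation $+\varepsilon$. The entire point of the definition $\varepsilon=(\mu-(2-2\gamma\beta))/(2(1+\delta\beta))$ is that, using $(\lambda-1)(\mu-1)=1$ and $1+\delta\beta=(\lambda-1)(\mu-1+\gamma\beta)$, it makes the coefficient $\mu\big((\mu-1)(2+2\delta\beta)-\mu-2\varepsilon(1+\delta\beta)\big)$ of the $\|J_2\cdot\|^2$ term vanish identically; hence $R_2+\varepsilon\Id$ is Lipschitz with constant $\mu-1-\varepsilon$ (nonnegative since $0\le\varepsilon\le\mu-1$). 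Multiplying this by the constant of $R_1$ and simplifying via $\mu-1=1/(\lambda-1)$ and $\lambda/(\lambda-1)=\mu$ collapses the product to $(1-\varepsilon(\lambda-1))\sqrt{1-\mu(2+2\gamma\alpha-\mu)/(1+2\gamma\alpha+\gamma^2\ell^2)}$, the second factor of $\rho$. The triangle inequality then gives $\|Tx-Ty\|\le\rho\|x-y\|$ with $\rho$ exactly as stated.

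The main obstacle is the strict inequality $\rho<1$. Neither factor is individually below $1$ in general: because $A$ may be only weakly monotone, $\Id-\varepsilon_1 R_1$ can be expansive, so the first factor may exceed $1$, and one must show that the convex combination of the two factors nevertheless dips strictly below $1$. I expect this to demand a lengthy but elementary estimation using all the constraints at once---$1+2\gamma\alpha>0$, $\mu\in[2-2\gamma\beta,2+2\gamma\alpha]$, $|\alpha|\le\ell$, $\kappa\in\left]0,1\right[$, and decisively $\alpha+\beta>0$, which is what turns the borderline nonexpansiveness of the classical operator into a true contraction. Once $\rho<1$ is secured, since $T$ is single-valued on all of $X$ the Banach contraction principle yields a unique fixed point $\overline{x}$, and the Picard iterates satisfy $\|x_{n+1}-\overline{x}\|=\|Tx_n-T\overline{x}\|\le\rho\|x_n-\overline{x}\|$, giving strong linear convergence at rate $\rho$.
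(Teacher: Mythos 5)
Your construction coincides with the paper's proof almost line by line: the same reduction of case~\ref{t:linear_reduce} to case~\ref{t:linear_full} via Remark~\ref{r:Lip_mono} and Proposition~\ref{p:maxalpha}\ref{p:maxalpha_cont}, the same splitting $T=(1-\kappa)(\Id-\varepsilon_1R_1)+\kappa(R_2+\varepsilon\Id)R_1$, the same use of Corollary~\ref{c:L-rresol} for $\Id-\varepsilon_1R_1$ and for $R_1$, and the same use of Corollary~\ref{c:rresol}\ref{c:rresol_perturb} for $R_2+\varepsilon\Id$ with $\varepsilon$ chosen to annihilate the $\|J_2\cdot\|^2$ coefficient. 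However, there is a genuine gap at exactly the point you flag as ``the main obstacle'': you never prove $\rho<1$; you only predict that it will follow from a lengthy estimation using all the constraints at once. Since $\rho<1$ is the entire content of the theorem---without it you get Lipschitz continuity of $T$ but no contraction, no unique fixed point, and no linear rate---the proof is incomplete as written.

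The missing step is in fact short, and your own definitions already contain it. Because $\varepsilon_1=\kappa\varepsilon/(1-\kappa)$, i.e.\ $(1-\kappa)\varepsilon_1=\kappa\varepsilon$, the convex combination of the two natural upper bounds collapses exactly to $1$:
\begin{equation*}
(1-\kappa)\big(1+\varepsilon_1(\lambda-1)\big)+\kappa\big(1-\varepsilon(\lambda-1)\big)
=1+(\lambda-1)\big((1-\kappa)\varepsilon_1-\kappa\varepsilon\big)=1.
\end{equation*}
Now $\rho_1:=\sqrt{(1+\varepsilon_1(\lambda-1))^2-\varphi\alpha_J}\leq 1+\varepsilon_1(\lambda-1)$, with equality if and only if $\varphi=0$, which (as in \eqref{e:180823a} of the paper) happens if and only if $\mu=2-2\gamma\beta$; and $\rho_2:=(1-\varepsilon(\lambda-1))\sqrt{1-\mu(2+2\gamma\alpha-\mu)/(1+2\gamma\alpha+\gamma^2\ell^2)}\leq 1-\varepsilon(\lambda-1)$, with equality if and only if $\mu=2+2\gamma\alpha$ (note that $1-\varepsilon(\lambda-1)>0$ since $\varepsilon<\mu-1$ and $(\lambda-1)(\mu-1)=1$). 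The hypothesis $\alpha+\beta>0$ gives $2-2\gamma\beta<2+2\gamma\alpha$, so $\mu$ cannot equal both endpoints of $[2-2\gamma\beta,\,2+2\gamma\alpha]$; hence at least one of the two inequalities is strict, and $\rho=(1-\kappa)\rho_1+\kappa\rho_2<1$. This is precisely the paper's closing argument, and it is a two-line observation rather than a lengthy estimation: had you pushed on the identity $(1-\kappa)\varepsilon_1=\kappa\varepsilon$, which you yourself invoked to justify the splitting of $T$, the conclusion would have fallen out immediately.
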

\begin{proof}
In view of Remark~\ref{r:Lip_mono}, assumption \ref{t:linear_reduce} implies assumption \ref{t:linear_full} because if $A$ is Lipschitz continuous with constant $\ell$, then $A$ is also $\alpha$-monotone with $\alpha :=-\ell$. It thus suffices to assume \ref{t:linear_full}.  
First, Proposition~\ref{p:maxalpha}\ref{p:maxalpha_cont} implies that $A$ is maximally $\alpha$-monotone. Next, we learn from Lemma~\ref{l:domT} that 
\begin{equation}
\label{e:domT'}
\min\{1+\gamma\alpha,1+\delta\beta\} >0
\end{equation}
and that all operators $J_1,J_2$, and $T$ are single-valued and have full domain. 

By the choice of $\mu$, it holds that $0 <\mu-1 \leq 1+2\gamma\alpha$, and so
\begin{equation}
\lambda =1+\frac{1}{\mu-1}\geq 1+\frac{1}{1+2\gamma\alpha} =\frac{2(1+\gamma\alpha)}{1+2\gamma\alpha},
\end{equation} 
which yields
\begin{equation}
\lambda(1+2\gamma\alpha)-2(1+\gamma\alpha)\geq 0.
\end{equation}
From $\mu\geq 2-2\gamma\beta$, we have that $\varepsilon\geq 0$ and $\varepsilon_1\geq 0$. It follows that $\varphi\geq 0$ and that
\begin{equation}\label{e:180823a}
\varphi =0 \iff \varepsilon_1 =0 \iff \varepsilon =0 \iff \mu =2-2\gamma\beta.
\end{equation}
Define $Q_1 :=\Id -\varepsilon_1 R_1$. Using Corollary~\ref{c:L-rresol} and noting that $\lambda =\mu(\lambda-1)$, we derive that $R_1$ is Lipschitz continuous with constant  
\begin{equation}
\sqrt{(\lambda-1)^2-\frac{\lambda\big((\lambda-1)(2+2\gamma\alpha)-\lambda\big)}{1+2\gamma\alpha+\gamma^2\ell^2}} =(\lambda-1)\sqrt{1-\frac{\mu(2+2\gamma\alpha-\mu)}{1+2\gamma\alpha+\gamma^2\ell^2}},
\end{equation} 
and that $Q_1$ is Lipschitz continuous with constant 
\begin{equation}
\label{e:rho1}
\rho_1 :=\sqrt{(1+\varepsilon_1(\lambda-1))^2 -\varphi\alpha_J}\leq 1+\varepsilon_1(\lambda-1),
\end{equation}
where $\alpha_J$ is defined as in \eqref{e:alpha_J}. It follows from \eqref{e:180823a} that the inequality is strict whenever $\mu >2-2\gamma\beta$.

Next, define $Q_2 :=R_2+\varepsilon\Id$. 
Since $\gamma =(\mu-1)\delta$, we note that
\begin{equation}
\varepsilon =\frac{\mu-(2-2\gamma\beta)}{2(1+\delta\beta)} =\frac{(\mu-1)(2+2\delta\beta)-\mu}{2(1+\delta\beta)} <\mu-1,
\end{equation}
which also gives
\begin{equation}
(\mu-1)(2+2\delta\beta)-\mu-2\varepsilon(1+\delta\beta) =0.
\end{equation}
By Corollary~\ref{c:rresol}\ref{c:rresol_perturb}, $Q_2$ is Lipschitz continuous with constant $(\mu-1-\varepsilon)$. Combining with the Lipschitz continuity of $R_1$ and noting that $(\mu-1)(\lambda-1) =1$, we have that $Q_2R_1$ is Lipschitz continuous with constant
\begin{subequations}
\label{e:rho2}
\begin{align}
\rho_2
&:=(\mu-1-\varepsilon)(\lambda-1)\sqrt{1-\frac{\mu(2+2\gamma\alpha-\mu)}{1+2\gamma\alpha+\gamma^2\ell^2}}\\
&\phantom{:}=(1-\varepsilon(\lambda-1))\sqrt{1-\frac{\mu(2+2\gamma\alpha-\mu)}{1+2\gamma\alpha+\gamma^2\ell^2}}\\
&\phantom{:}\leq 1-\varepsilon(\lambda-1),
\end{align}
\end{subequations} 
where the inequality is strict whenever $\mu <2+2\gamma\alpha$.  

Now, we express
\begin{subequations}
\begin{align}
T&=(1-\kappa)\Id -(1-\kappa)\varepsilon_1R_1 + \kappa R_2R_1+\kappa\varepsilon R_1\\
&=(1-\kappa)(\Id-\varepsilon_1R_1)+\kappa(R_2+\varepsilon\Id)R_1\\
&=(1-\kappa)Q_1+\kappa Q_2R_1.
\end{align}
\end{subequations}
We note from $\alpha+\beta >0$ that $2+2\gamma\alpha >2-2\gamma\beta$, so at least one of two inequalities in \eqref{e:rho1} and \eqref{e:rho2} is strict.
Therefore, $T$ is Lipschitz continuous with constant
\begin{equation}
\rho :=(1-\kappa)\rho_1+\kappa \rho_2 <(1-\kappa)(1+\varepsilon_1(\lambda-1))+\kappa(1-\varepsilon(\lambda-1))=1,
\end{equation}
which completes the proof.
\end{proof}

The following is a direct consequence of Theorem~\ref{t:linear}, which was also proved in \cite{Gis17}.
\begin{corollary}[{\rm \cite[Theorem~6.5]{Gis17}}]
\label{c:classicalDR_gis}
Suppose that $A$ is $\alpha$-monotone with $\alpha\in \RPP$ and Lipschitz continuous with constant $\ell$, that $B$ is maximally monotone, and that $\zer(A+B)\neq \varnothing$. Suppose also that $\lambda =\mu =2$ and $\gamma =\delta\in \RPP$. 
Then $T$ is Lipschitz continuous with constant
\begin{equation}
\rho:=(1-\kappa)+\kappa\sqrt{1-\frac{4\gamma\alpha}{1+2\gamma\alpha+\gamma^2\ell^2}} <1.
\end{equation}
\end{corollary}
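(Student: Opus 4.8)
The plan is to read the corollary as the specialization of Theorem~\ref{t:linear} obtained by taking $\beta=0$ together with the classical parameter choice $\lambda=\mu=2$, $\gamma=\delta$. Since $B$ is maximally monotone it is maximally $0$-monotone, so I set $\beta:=0$; as $\alpha\in\RPP$ we have $\alpha+\beta=\alpha>0$, which places us in case~\ref{t:linear_full} of Theorem~\ref{t:linear} (here $A$ is $\alpha$-monotone and Lipschitz with constant $\ell$). The first task is therefore to confirm that the tuple $(\gamma,\delta,\lambda,\mu)=(\gamma,\gamma,2,2)\in\RPP^2\times\left]1,+\infty\right[^2$ satisfies \eqref{e:parameters}: condition \eqref{e:parameters_a} reads $1+2\gamma\alpha>0$ and holds since $\gamma,\alpha>0$; condition \eqref{e:parameters_b} reads $\mu\in[2-2\gamma\beta,2+2\gamma\alpha]=[2,2+2\gamma\alpha]$, which accommodates $\mu=2$; and condition \eqref{e:parameters_c} holds because $(\lambda-1)(\mu-1)=1$ and $\delta=(\lambda-1)\gamma=\gamma$. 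With $\zer(A+B)\neq\varnothing$ by assumption, every hypothesis of Theorem~\ref{t:linear}\ref{t:linear_full} is in force.

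Once the theorem applies, the remainder is a direct substitution into its Lipschitz-constant formula. The decisive simplification is that the perturbation parameter vanishes: because $\beta=0$ and $\mu=2$, we get $\varepsilon=\frac{\mu-(2-2\gamma\beta)}{2(1+\delta\beta)}=\frac{2-2}{2}=0$, and consequently $\varepsilon_1=\frac{\kappa\varepsilon}{1-\kappa}=0$ and $\varphi=0$ as well. With these three quantities equal to zero, the first summand of $\rho$ collapses to $(1-\kappa)\sqrt{1^2-0}=1-\kappa$, and the prefactor $1-\varepsilon(\lambda-1)$ of the second summand becomes $1$.

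It then remains only to evaluate the surviving square root using $\mu=2$: one computes $\mu(2+2\gamma\alpha-\mu)=2\cdot 2\gamma\alpha=4\gamma\alpha$, so the second summand equals $\kappa\sqrt{1-\frac{4\gamma\alpha}{1+2\gamma\alpha+\gamma^2\ell^2}}$, and adding the two summands reproduces exactly the claimed constant. The strict bound $\rho<1$ is then inherited verbatim from Theorem~\ref{t:linear}. Because the argument is pure substitution there is no genuine analytic obstacle; the only point needing care is the bookkeeping confirming that $\varepsilon$ (hence $\varepsilon_1$ and $\varphi$) vanishes under the classical choice $\mu=2$ with $\beta=0$ — this is precisely the boundary case $\mu=2-2\gamma\beta$, where the under-/over-reflection correction disappears and the formula degenerates to the form stated here.
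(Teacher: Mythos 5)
Your proposal is correct and follows exactly the paper's own route: specialize Theorem~\ref{t:linear}\ref{t:linear_full} with $\beta=0$, verify that $(\gamma,\gamma,2,2)$ satisfies \eqref{e:parameters}, observe that $\varepsilon=\varepsilon_1=\varphi=0$, and substitute $\mu=2$ into the rate formula. The paper's proof is just a terser version of the same argument, leaving the parameter verification and the substitution implicit.
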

\begin{proof}
Since $\lambda =\mu =2$, $\gamma =\delta$, and $\alpha >0$, one can check that \eqref{e:parameters} holds with $\beta =0$. Now apply Theorem~\ref{t:linear} and note that $\varepsilon =\varepsilon_1 =0$ in this case.
\end{proof}

Next, we present another case of the classical DR algorithm when $A$ is monotone and $B$ is strongly monotone. We note the \emph{exchange} of monotonicity assumptions on $A$ and $B$ in Corollaries~\ref{c:classicalDR_gis} and \ref{c:classicalDR}, and that in the latter result, we consider only the $\kappa=1/2$ case for simplicity.

\begin{corollary}[linear convergence of classical DR algorithm]
\label{c:classicalDR}
Suppose that $A$ is monotone and Lipschitz continuous with constant $\ell$, that $B$ is maximally $\beta$-monotone with $\beta\in \RPP$, and that $\zer(A+B)\neq \varnothing$. Suppose also that $\lambda =\mu =2$, $\kappa=1/2$, and $\gamma =\delta\in \RPP$. 
Then $T$ is Lipschitz continuous with constant
\begin{equation}\label{e:classicalDR2}
\rho:=\frac{1}{2(1+\gamma\beta)}\left(
\sqrt{(1+2\gamma\beta)^2-4\gamma\beta(1+\gamma\beta)\min\Big\{\frac{1}{1+\gamma\ell},\frac{1}{1+\gamma^2\ell^2}\Big\}}+1\right) <1.
\end{equation}
Furthermore, if the monotonicity assumption of $A$ is replaced by  
\begin{equation}
\label{e:Aequal}
\forall x,y\in \dom A,\quad \scal{x-y}{Ax-Ay} =0,
\end{equation}
then the Lipschitz constant of $T$ is improved to
\begin{equation}
\label{e:sharp}
\overline{\rho}:=\frac{1}{2(1+\gamma\beta)}\left(
\sqrt{(1+2\gamma\beta)^2-\frac{4\gamma\beta(1+\gamma\beta)}{1+\gamma^2\ell^2}}+1\right) <1.
\end{equation}
\end{corollary}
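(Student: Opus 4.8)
The plan is to read this as the specialization of Theorem~\ref{t:linear}\ref{t:linear_full} to $\alpha = 0$: a monotone operator is exactly a $0$-monotone one, and $\beta > 0$ gives $\alpha + \beta = \beta > 0$, as required. First I would confirm that the prescribed parameters are admissible, i.e.\ that $\lambda = \mu = 2$, $\kappa = 1/2$, $\gamma = \delta$ satisfy \eqref{e:parameters} with $\alpha = 0$. Indeed $1 + 2\gamma\alpha = 1 > 0$; the membership $\mu = 2 \in [2 - 2\gamma\beta, 2 + 2\gamma\alpha] = [2 - 2\gamma\beta, 2]$ holds since $\gamma\beta > 0$ pulls the lower endpoint below $2$ while $\mu = 2$ meets the upper endpoint exactly; and $(\lambda - 1)(\mu - 1) = 1$ together with $\delta = (\lambda - 1)\gamma = \gamma$ is immediate. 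Hence Theorem~\ref{t:linear} yields that $T$ is Lipschitz continuous with constant $\rho < 1$ given by its formula, and the task reduces to evaluating that formula.

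Second, I would substitute $\alpha = 0$, $\lambda = \mu = 2$, $\kappa = 1/2$ into the auxiliary quantities. This gives $\varepsilon = \gamma\beta/(1 + \gamma\beta)$, hence $\varepsilon_1 = \kappa\varepsilon/(1 - \kappa) = \varepsilon$, and $\varphi = 4\varepsilon_1 = 4\gamma\beta/(1 + \gamma\beta)$, where the $\varepsilon_1^2$-contribution to $\varphi$ drops out because $\lambda(1 + 2\gamma\alpha) - 2(1 + \gamma\alpha) = 0$ at $\alpha = 0$. The second radical in $\rho$ collapses to $1$ since its numerator $\mu(2 + 2\gamma\alpha - \mu)$ vanishes, so the second summand of $\rho$ is $\kappa(1 - \varepsilon(\lambda - 1)) = \tfrac{1}{2}(1 - \varepsilon) = \tfrac{1}{2(1 + \gamma\beta)}$. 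For the first summand I would use $1 + \varepsilon_1(\lambda - 1) = (1 + 2\gamma\beta)/(1 + \gamma\beta)$ and factor $1/(1 + \gamma\beta)^2$ out of the radicand to obtain $\tfrac{1}{2(1 + \gamma\beta)}\sqrt{(1 + 2\gamma\beta)^2 - 4\gamma\beta(1 + \gamma\beta)\alpha_J}$; adding the two summands produces the announced shape of $\rho$, modulo the identification of $\alpha_J$.

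The only mildly delicate step is matching $\alpha_J$ to the minimum in \eqref{e:classicalDR2}. At $\alpha = 0$ the piecewise rule \eqref{e:alpha_J} reads $\alpha_J = 1/(1 + \gamma^2\ell^2)$ for $\gamma\ell \geq 1$ and $\alpha_J = 1/(1 + \gamma\ell)$ for $\gamma\ell \leq 1$; since $\gamma\ell \geq 1 \Leftrightarrow \gamma^2\ell^2 \geq \gamma\ell$ and $\gamma\ell \leq 1 \Leftrightarrow \gamma^2\ell^2 \leq \gamma\ell$, the branch selected by \eqref{e:alpha_J} is in each regime the smaller of the two candidate fractions, so $\alpha_J = \min\{1/(1 + \gamma\ell), 1/(1 + \gamma^2\ell^2)\}$, giving \eqref{e:classicalDR2}. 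For the sharpened constant \eqref{e:sharp}, the hypothesis \eqref{e:Aequal} says that $A$ satisfies \eqref{e:alpha-mono} with equality (with $\alpha = 0$), so the equality clause of \eqref{e:alpha_J} applies and $\alpha_J = 1/(1 + 2\gamma\alpha + \gamma^2\ell^2) = 1/(1 + \gamma^2\ell^2)$ with no case split; inserting this value in place of the minimum yields $\overline{\rho}$. There is no substantive obstacle here: the content is entirely in Theorem~\ref{t:linear}, and $\rho, \overline{\rho} < 1$ is inherited from it (or re-derived on the spot, since $4\gamma\beta(1 + \gamma\beta)\alpha_J > 0$ keeps the radicand strictly below $(1 + 2\gamma\beta)^2$).
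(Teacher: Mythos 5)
Your proposal is correct and follows essentially the same route as the paper: verify that $\lambda=\mu=2$, $\gamma=\delta$ satisfy \eqref{e:parameters} with $\alpha=0$, apply Theorem~\ref{t:linear}, simplify $\varepsilon$, $\varepsilon_1$, $\varphi$ and the two radicals to reach $\rho=\tfrac{1}{2(1+\gamma\beta)}\bigl(\sqrt{(1+2\gamma\beta)^2-4\gamma\beta(1+\gamma\beta)\alpha_J}+1\bigr)$, and then identify $\alpha_J$ with the minimum in \eqref{e:classicalDR2} (resp.\ with $1/(1+\gamma^2\ell^2)$ under \eqref{e:Aequal} via the equality clause of \eqref{e:alpha_J}). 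Your treatment is, if anything, slightly more explicit than the paper's in spelling out the intermediate algebra, but there is no substantive difference.
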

\begin{proof}
Since $\lambda =\mu =2$, $\gamma =\delta>0$, and $\beta >0$, it is clear that \eqref{e:parameters} is satisfied with $\alpha =0$. 
Applying Theorem~\ref{t:linear},
we obtain that $T$ is Lipschitz continuous with constant
\begin{equation}\label{e:classicalDR1}
\rho:=\frac{1}{2}\left(\sqrt{(1+\varepsilon)^2-4\varepsilon\alpha_J}+1-\varepsilon\right),
\end{equation}
where $\varepsilon :=\frac{\gamma\beta}{1+\gamma\beta}$. Then
\begin{subequations}
\begin{align}
\rho 
&=\frac{1}{2}\left(\sqrt{\Big(\frac{1+2\gamma\beta}{1+\gamma\beta}\Big)^2-\frac{4\gamma\beta}{1+\gamma\beta}\alpha_J}+\frac{1}{1+\gamma\beta}\right)\\
&=\frac{1}{2(1+\gamma\beta)}\left(\sqrt{(1+2\gamma\beta)^2-4\gamma\beta(1+\gamma\beta)\alpha_J}+1\right).
\end{align}
\end{subequations}
Now, it follows from \eqref{e:alpha_J} that
\begin{subequations}
\begin{align}
\alpha_J 
&=\begin{cases}
\dfrac{1}{1+\gamma^2\ell^2} &\text{if~} \gamma\ell\geq 1,\\[10pt]
\dfrac{1}{1+\gamma\ell}&\text{if~} \gamma\ell\leq 1
\end{cases}\\
&=\min\left\{\frac{1}{1+\gamma\ell},\frac{1}{1+\gamma^2\ell^2}\right\},
\end{align}
\end{subequations}
which yields \eqref{e:classicalDR2}.

Finally, if $A$ satisfies \eqref{e:Aequal}, then, again by \eqref{e:alpha_J},
\begin{equation}
\alpha_J =\frac{1}{1+\gamma^2\ell^2},
\end{equation}
and we get \eqref{e:sharp}.
\end{proof}

\begin{remark}[improved Lipschitz constant for classical DR operator]
\label{r:L_improved}
For the classical DR operator ($\lambda=\mu=2$ and $\kappa=1/2$), the Lipschitz constant obtained in Corollary~\ref{c:classicalDR} is sharper than the one obtained in \cite[Theorem~4.4(i)]{MV18}. Indeed, by setting $\gamma=\delta=1$, the Lipschitz constant of \cite[Theorem~4.4(i)]{MV18} is
\begin{subequations}
\begin{align}
r&=\frac{1}{2(1+\beta)}\left(\sqrt{2\beta^2+2\beta+1
+2\left(1-\frac{1}{(1+\ell)^2}-\frac{1}{1+\ell^2}\right)\beta(1+\beta)}+1\right)\\
&=\frac{1}{2(1+\beta)}\left(\sqrt{(1+2\beta)^2
-2\beta(1+\beta)\left(\frac{1}{(1+\ell)^2}+\frac{1}{1+\ell^2}\right)}+1\right),
\end{align}
\end{subequations}
while Corollary~\ref{c:classicalDR} gives the Lipschitz constant
\begin{equation}
\rho=\frac{1}{2(1+\beta)}\left(\sqrt{(1+2\beta)^2
-2\beta(1+\beta)\min\left\{\frac{2}{1+\ell},\frac{2}{1+\ell^2}\right\}}+1\right).
\end{equation}
One can check that
\begin{equation}
\min\left\{\frac{2}{1+\ell},\frac{2}{1+\ell^2}\right\}
>\frac{1}{(1+\ell)^2}+\frac{1}{1+\ell^2}.
\end{equation}
Therefore, $\rho$ is strictly less than $r$.

Regarding the second part of Corollary~\ref{c:classicalDR}, we note that Remark~\ref{r:equality} provides a class of operators satisfying \eqref{e:Aequal} and that the Lipschitz constant \eqref{e:sharp} was shown to be sharp in \cite{MV18}.
\end{remark}

\begin{remark}[choosing the parameter $\gamma$ for best Lipschitz constant]
When the Lipschitz constant $\ell$ of $A$ and the monotonicity constant $\beta$ of $B$ are known, in order to find the best Lipschitz constant for the classical DR operator, one can sketch $\rho$ in \eqref{e:classicalDR2} as a function of $\gamma$ and approximate numerically the value $\gamma$ that yields the minimum of $\rho$. It is, however, not clear how to obtain an explicit formula for the best such value. Indeed, a similar situation was also mentioned in \cite[Remark~5.4]{MV18}.
\end{remark}

As a counterpart of Theorem~\ref{t:linear}, we next consider the adaptive DR algorithm for the case in which $B$ is Lipschitz continuous. For this case, however, we need an additional assumption that $B$ is a linear operator, which implies that $J_2$ and $R_2$ are also linear. To make the argument more symmetric, we will prove an equivalent form of \eqref{e:parameters}.

\begin{lemma}
\label{l:equivalent}
Suppose that $(\gamma,\delta,\lambda,\mu)\in \RPP^2\times\left]1,+\infty\right[^2$. Then
\eqref{e:parameters} is equivalent to
\begin{subequations}
\label{e:parameters'}
\begin{align}
&1+2\delta\beta >0,\label{e:parameters'_a}\\
&\lambda\in \left[2-2\delta\alpha, 2+2\delta\beta\right], \label{e:parameters'_b}\\
&(\mu-1)(\lambda-1) =1, \text{~and~} \gamma =(\mu-1)\delta. \label{e:parameters'_c}
\end{align}
\end{subequations}
\end{lemma}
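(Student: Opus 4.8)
The plan is to work throughout under the standing hypothesis $(\gamma,\delta,\lambda,\mu)\in\RPP^2\times\left]1,+\infty\right[^2$ and to split the asserted equivalence into one observation per line of \eqref{e:parameters} versus \eqref{e:parameters'}. The decisive reduction is that, once the defining relations are matched, the inequality \eqref{e:parameters_b} transforms \emph{exactly} into \eqref{e:parameters'_b}, while the positivity conditions \eqref{e:parameters_a} and \eqref{e:parameters'_a} turn out to be automatic.

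First I would dispose of \eqref{e:parameters_c} and \eqref{e:parameters'_c}. The equation $(\lambda-1)(\mu-1)=1$ is symmetric in $\lambda$ and $\mu$, hence common to both systems. Granting it, one has $\mu-1=1/(\lambda-1)$, so that $\gamma=(\mu-1)\delta$ reads $\gamma=\delta/(\lambda-1)$, which is the same as $\delta=(\lambda-1)\gamma$. Thus \eqref{e:parameters_c} and \eqref{e:parameters'_c} are equivalent.

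Next, assuming \eqref{e:parameters_c}, I would rewrite the two inequalities bundled in \eqref{e:parameters_b} using $\mu=1+\frac{1}{\lambda-1}$ and $\gamma=\frac{\delta}{\lambda-1}$, and then clear the positive factor $\lambda-1$. The upper bound $\mu\le 2+2\gamma\alpha$ becomes $1\le(\lambda-1)+2\delta\alpha$, i.e.\ $\lambda\ge 2-2\delta\alpha$, and the lower bound $\mu\ge 2-2\gamma\beta$ becomes $(\lambda-1)-2\delta\beta\le 1$, i.e.\ $\lambda\le 2+2\delta\beta$; together these are precisely \eqref{e:parameters'_b}. Since $\lambda-1>0$, each multiplication preserves the direction of the inequality, so these are genuine equivalences rather than one-way implications.

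Finally I would note that \eqref{e:parameters_a} and \eqref{e:parameters'_a} are consequences of the bounds already handled together with the standing constraints. From $\mu>1$ and $\mu\le 2+2\gamma\alpha$ one gets $2+2\gamma\alpha>1$, that is $1+2\gamma\alpha>0$; symmetrically, $\lambda>1$ with $\lambda\le 2+2\delta\beta$ gives $1+2\delta\beta>0$. Hence, after the equivalence \eqref{e:parameters_b}$\iff$\eqref{e:parameters'_b} is established, both positivity conditions hold automatically, and the three observations assemble into \eqref{e:parameters}$\iff$\eqref{e:parameters'}. The one point deserving care---what I would flag as the main subtlety rather than a genuine obstacle---is that \eqref{e:parameters_a} and \eqref{e:parameters'_a} do \emph{not} correspond to each other under the substitution $\gamma\leftrightarrow\delta$, $\lambda\leftrightarrow\mu$, $\alpha\leftrightarrow\beta$; they must instead be recovered separately from the relevant halves of \eqref{e:parameters_b}/\eqref{e:parameters'_b} and from $\mu>1$, $\lambda>1$. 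Conceptually, the lemma simply records that the parameter conditions are invariant under this operator-swapping involution, which is exactly the symmetry the subsequent (Lipschitz-$B$) analysis will exploit.
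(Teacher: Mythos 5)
Your proof is correct and follows essentially the same route as the paper's: match the coupling conditions \eqref{e:parameters_c}/\eqref{e:parameters'_c}, convert the interval condition by multiplying through by $\lambda-1>0$ (the same algebraic identities the paper writes out, e.g.\ $\lambda-(2-2\delta\alpha)=(\lambda-1)(2+2\gamma\alpha-\mu)$), and recover the positivity condition from the interval bound together with $\lambda>1$ (resp.\ $\mu>1$) --- the paper does exactly this via $1+2\delta\beta=(2+2\delta\beta-\lambda)+(\lambda-1)>0$. The only difference is presentational: the paper proves the single implication \eqref{e:parameters}$\Rightarrow$\eqref{e:parameters'} and dismisses the converse as ``totally similar,'' whereas you run every step as a reversible equivalence, which makes the two-way claim explicit without appealing to symmetry.

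One side remark of yours is wrong, though it does not damage the proof: \eqref{e:parameters_a} and \eqref{e:parameters'_a} \emph{do} correspond to each other under the substitution $\gamma\leftrightarrow\delta$, $\lambda\leftrightarrow\mu$, $\alpha\leftrightarrow\beta$. That swap sends $1+2\gamma\alpha>0$ precisely to $1+2\delta\beta>0$, and indeed it maps each line of \eqref{e:parameters} onto the corresponding line of \eqref{e:parameters'}. This exact line-by-line correspondence is what licenses the paper's one-implication-plus-symmetry shortcut, and it is also the content of your own (correct) closing sentence about invariance under the operator-swapping involution --- which contradicts the ``subtlety'' you flagged. What \emph{is} true, and perhaps what you meant, is that within a one-directional argument the condition $1+2\delta\beta>0$ cannot be obtained by formally substituting into the hypothesis $1+2\gamma\alpha>0$; it must be derived, as both you and the paper do, from the bound $\lambda\le 2+2\delta\beta$ together with $\lambda>1$.
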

\begin{proof}
It suffices to prove one implication that \eqref{e:parameters} implies \eqref{e:parameters'} because the converse is totally similar.
First, it is clear that \eqref{e:parameters'_c} is equivalent to \eqref{e:parameters_c}. For the reminder of the proof, we will use $\lambda >1$, $(\lambda-1)(\mu-1) =1$, and $\delta =(\lambda-1)\gamma$. By \eqref{e:parameters_b},  
\begin{subequations}
\begin{align}
\lambda-(2-2\delta\alpha) &=\lambda-1-(\lambda-1)(\mu-1)+2(\lambda-1)\gamma\alpha 
=(\lambda-1)(2+2\gamma\alpha-\mu)\geq 0,\\
2+2\delta\beta-\lambda &=(\lambda-1)(\mu-1)+2(\lambda-1)\gamma\beta+1-\lambda =(\lambda-1)\big(\mu-(2-2\gamma\beta)\big)\geq 0.
\end{align}
\end{subequations}
Thus, $1+2\delta\beta =(2+2\delta\beta-\lambda)+(\lambda-1) >0$, which completes \eqref{e:parameters'}.
\end{proof}

\begin{theorem}[linear convergence when $B$ is Lipschitz]
\label{t:linearB}
Suppose that either
\begin{enumerate}
\item\label{t:linearB_full} 
$A$ is maximally $\alpha$-monotone, $B$ is linear, $\beta$-monotone, and Lipschitz continuous with constant $\ell$, and $\alpha+\beta >0$; or
\item\label{t:linearB_reduce} 
$A$ is maximally $\alpha$-monotone, $B$ is linear and Lipschitz continuous with constant $\ell <\alpha$, and $\beta :=-\ell$.
\end{enumerate}
Suppose also that $\zer(A+B)\neq \varnothing$ and that $(\gamma,\delta,\lambda,\mu)\in\RPP^2\times \left]1,+\infty\right[^2$ satisfies \eqref{e:parameters}.
Then $T$ is Lipschitz continuous with constant
\begin{equation}
\rho :=(1-\kappa)
\sqrt{\big(1+\varepsilon_2(\mu-1)\big)^2-\varphi\alpha_J} +\kappa(1-\varepsilon(\mu-1))\sqrt{1-\frac{\lambda(2+2\delta\beta-\lambda)}{1+2\delta\beta+\delta^2\ell^2}} <1,
\end{equation} 
where
\begin{subequations}
\begin{align}
&\varepsilon :=\frac{\lambda-(2-2\delta\alpha)}{2(1+\gamma\alpha)},\ \varepsilon_2:=\frac{\kappa\varepsilon}{1-\kappa},\\
&\varphi :=\varepsilon_1\mu[2(1+\delta\beta)+\varepsilon_1\big(\mu(1+2\delta\beta)-2(1+\delta\beta)\big)],\\
&\text{$\alpha_J$ as in \eqref{e:alpha_J} with $(\alpha, \gamma)$ replaced by $(\beta, \delta)$}.
\end{align}
\end{subequations}
Consequently, every DR sequence $(x_n)_\nnn$ generated by $T$ converges strongly to the unique fixed point $\overline{x}$ of $T$ with linear rate $\rho$.
\end{theorem}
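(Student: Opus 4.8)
The plan is to mirror the proof of Theorem~\ref{t:linear} under the role exchange $(\alpha,\gamma,\lambda)\leftrightarrow(\beta,\delta,\mu)$ made available by Lemma~\ref{l:equivalent}, the decisive new ingredient being that linearity of $B$ makes $J_2$ and hence $R_2$ linear. First I would dispose of case \ref{t:linearB_reduce}: by Remark~\ref{r:Lip_mono} a Lipschitz operator with constant $\ell$ is automatically $(-\ell)$-monotone, so setting $\beta:=-\ell$ turns \ref{t:linearB_reduce} into \ref{t:linearB_full} with $\alpha+\beta=\alpha-\ell>0$. Under \ref{t:linearB_full}, $B$ is $\beta$-monotone and continuous with full domain, so Proposition~\ref{p:maxalpha}\ref{p:maxalpha_cont} gives maximal $\beta$-monotonicity; together with maximal $\alpha$-monotonicity of $A$ and $\alpha+\beta>0$, Lemma~\ref{l:domT} yields $\min\{1+\gamma\alpha,1+\delta\beta\}>0$ and single-valuedness with full domain of $J_1,J_2,T$. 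Since $B$ is linear, $\Id+\delta B$ is linear and invertible on $X$, so $J_2$ and $R_2$ are linear.

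Next I would recast the standing hypothesis \eqref{e:parameters} in the symmetric form \eqref{e:parameters'} via Lemma~\ref{l:equivalent}, so that $(\delta,\mu)$ for $B$ now play exactly the role $(\gamma,\lambda)$ for $A$ played in Theorem~\ref{t:linear}. Then I apply the resolvent estimates to the $B$-side. Corollary~\ref{c:L-rresol}\ref{c:L-rresol_relax} (with $(\alpha,\gamma,\lambda)$ replaced by $(\beta,\delta,\mu)$) shows $R_2$ is Lipschitz with constant $(\mu-1)\sqrt{1-\frac{\lambda(2+2\delta\beta-\lambda)}{1+2\delta\beta+\delta^2\ell^2}}$, where I use $(\mu-1)(\lambda-1)=1$; its hypotheses \eqref{e:cL-rresol-1}, namely $1+\delta\beta>0$ and $\mu(1+2\delta\beta)-2(1+\delta\beta)\ge 0$, follow from \eqref{e:parameters'_a} and from $\lambda\le 2+2\delta\beta$ in \eqref{e:parameters'_b}. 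Corollary~\ref{c:L-rresol}\ref{c:L-rresol_perturb} then shows $Q_2:=\Id-\varepsilon_2 R_2$ is Lipschitz with constant $\sqrt{(1+\varepsilon_2(\mu-1))^2-\varphi\alpha_J}\le 1+\varepsilon_2(\mu-1)$, the radicand coefficient being exactly $\varphi$. On the $A$-side I use Corollary~\ref{c:rresol}\ref{c:rresol_perturb} for $R_1$: a short computation with $\delta=(\lambda-1)\gamma$ shows $\varepsilon=\frac{\lambda-(2-2\delta\alpha)}{2(1+\gamma\alpha)}=\frac{(\lambda-1)(2+2\gamma\alpha)-\lambda}{2(1+\gamma\alpha)}$, whence $(\lambda-1)(2+2\gamma\alpha)-\lambda-2\varepsilon(1+\gamma\alpha)=0$ and $0\le\varepsilon<\lambda-1$, so $Q_1:=R_1+\varepsilon\Id$ is Lipschitz with constant $\lambda-1-\varepsilon$; composing with $R_2$ and using $(\mu-1)(\lambda-1)=1$ gives that $R_2Q_1$ is Lipschitz with constant $(1-\varepsilon(\mu-1))\sqrt{1-\frac{\lambda(2+2\delta\beta-\lambda)}{1+2\delta\beta+\delta^2\ell^2}}\le 1-\varepsilon(\mu-1)$.

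The crux is the algebraic decomposition $T=(1-\kappa)Q_2+\kappa R_2Q_1$. Expanding, $(1-\kappa)(\Id-\varepsilon_2 R_2)+\kappa R_2(R_1+\varepsilon\Id)$, and here I would invoke the \emph{linearity} of $R_2$ to write $R_2(R_1x+\varepsilon x)=R_2R_1x+\varepsilon R_2x$; the choice $\varepsilon_2:=\kappa\varepsilon/(1-\kappa)$ then cancels the two stray copies of $R_2$ (the term $-(1-\kappa)\varepsilon_2R_2$ against $+\kappa\varepsilon R_2$), leaving exactly $(1-\kappa)\Id+\kappa R_2R_1=T$. This is the one place the argument genuinely departs from Theorem~\ref{t:linear}: there the operator split off ($R_1$) sits \emph{inside} the composition, so no linearity is needed, whereas here the operator to be absorbed ($R_2$) sits \emph{outside}, and only linearity lets it distribute across the inner perturbation $R_1+\varepsilon\Id$. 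I expect this to be the main obstacle, and it is precisely what forces the extra hypothesis that $B$ be linear.

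Finally, the triangle inequality shows $T$ is Lipschitz with constant at most $(1-\kappa)\sqrt{(1+\varepsilon_2(\mu-1))^2-\varphi\alpha_J}+\kappa(1-\varepsilon(\mu-1))\sqrt{1-\frac{\lambda(2+2\delta\beta-\lambda)}{1+2\delta\beta+\delta^2\ell^2}}=\rho$. Bounding both radicals by their linear majorants and using $(1-\kappa)\varepsilon_2=\kappa\varepsilon$ gives $\rho<(1-\kappa)(1+\varepsilon_2(\mu-1))+\kappa(1-\varepsilon(\mu-1))=1$; strictness is guaranteed because $\alpha+\beta>0$ forces $2-2\delta\alpha<2+2\delta\beta$, so $\lambda\in[2-2\delta\alpha,2+2\delta\beta]$ cannot sit at both endpoints, making at least one radical bound strict (the first when $\lambda>2-2\delta\alpha$, i.e.\ $\varepsilon_2>0$; the second when $\lambda<2+2\delta\beta$). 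Hence $T$ is a Banach contraction with constant $\rho<1$, so every DR sequence converges strongly and $Q$-linearly with rate $\rho$ to the unique $\overline{x}\in\Fix T$, and $J_1\overline{x}\in\zer(A+B)$ by Lemma~\ref{l:Fix}\ref{l:Fix_sol}.
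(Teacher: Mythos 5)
Your proof is correct and follows essentially the same route as the paper: reduce case \ref{t:linearB_reduce} to \ref{t:linearB_full} via Remark~\ref{r:Lip_mono}, use Lemma~\ref{l:equivalent} to pass to \eqref{e:parameters'}, exploit linearity of $R_2$ to write $T=(1-\kappa)Q_2+\kappa R_2Q_1$ with $Q_1:=R_1+\varepsilon\Id$ and $Q_2:=\Id-\varepsilon_2R_2$, estimate the two pieces via Corollaries~\ref{c:rresol} and \ref{c:L-rresol}, and conclude by the triangle inequality with the same endpoint argument for strictness. The only difference is that you spell out the computations the paper compresses into ``proceeding similarly to the proof of Theorem~\ref{t:linear}'' (and you correctly read the paper's $\varepsilon_1$ in the definition of $\varphi$ as $\varepsilon_2$), so there is nothing to correct.
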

\begin{proof}
For the same reason as in the proof of Theorem~\ref{t:linear}, we only prove the result under assumption \ref{t:linearB_full}.
Notice that $B$ is maximally $\alpha$-monotone due to Proposition~\ref{p:maxalpha}\ref{p:maxalpha_cont}. Now, by Lemma~\ref{l:domT}, 
\begin{equation}
\min\{1+\gamma\alpha,1+\delta\beta\} >0,
\end{equation}
and all operators $J_1,J_2$ and $T$ are single-valued and have full domain.

Since $B$ is linear, so are $J_2 =J_{\delta B} =(\Id+\delta B)^{-1}$ and $R_2 =(1-\mu)\Id+\mu J_2$. We can thus write
\begin{subequations}
\begin{align}
T 
&=(1-\kappa)\Id-(1-\kappa)\varepsilon_2R_2 +\kappa R_2R_1+\kappa\varepsilon R_2\\
&=(1-\kappa)Q_2 +\kappa R_2Q_1,
\end{align}
\end{subequations}  
where $Q_1 :=R_1+\varepsilon\Id$ and $Q_2 :=\Id-\varepsilon_2R_2$.

Now, by Lemma~\ref{l:equivalent}, \eqref{e:parameters} is equivalent to \eqref{e:parameters'}. Proceeding similarly to the proof of Theorem~\ref{t:linear}, we derive that $Q_2$ is Lipschitz continuous with constant 
\begin{equation}
\sqrt{(1+\varepsilon_2(\mu-1))^2 -\varphi\alpha_J}\leq 1+\varepsilon_2(\mu-1),
\end{equation}  
that $R_2Q_1$ is Lipschitz continuous with constant
\begin{equation}
(1-\varepsilon(\mu-1))\sqrt{1-\frac{\lambda(2+2\delta\beta-\lambda)}{1+2\delta\beta+\delta^2\ell^2}}
\leq 1-\varepsilon(\mu-1),
\end{equation} 
and that at least one of these two inequalities is strict. The conclusion thus follows.
\end{proof}

\begin{remark}
It is worth pointing out that the sum of $\alpha$- and $\beta$-monotone operators with $\alpha+\beta\geq 0$ can be transformed into the sum of two monotone operators by shifting the identity between them as 
\begin{equation}\label{e:shift}
A+B = \Big(A+\frac{\beta-\alpha}{2}\Id\Big)+\Big(B+\frac{\alpha-\beta}{2}\Id\Big)
=:\widetilde{A} + \widetilde{B}.
\end{equation}
Then one can apply the classical DR algorithm for two \emph{new} monotone operators $\widetilde{A}$ and $\widetilde{B}$. However, this is the algorithm that operates on {\em different} operators. Here, our main goal is to show the behavior of the DR algorithm on original data and the smooth transition from the classical case to the adaptive case of the DR algorithm. This approach might be especially helpful when the resolvents are given as \emph{black boxes}, in which case one just needs to adjust the algorithm using corresponding parameters.

When involving two shifted operators like $\widetilde{A}$ and $\widetilde{B}$, it is natural to seek a shifting strategy to obtain the optimal linear convergence rate in Theorem~\ref{t:linear} or Theorem~\ref{t:linearB}. The answer is not clear to us as we hope to address the issue in some future work.
\end{remark}

\section{Applications to structured minimization problems}
\label{s:applications}

Given a function $f\colon X\to \left]-\infty,+\infty\right]$, we recall that $f$ is \emph{proper} if 
\begin{equation}
\dom f :=\menge{x\in X}{f(x) <+\infty}\neq \varnothing
\end{equation}
and \emph{lower semicontinuous} if 
\begin{equation}
\forall x\in \dom f,\quad f(x)\leq \liminf_{z\to x} f(z).
\end{equation}
The function $f$ is said to be \emph{$\alpha$-convex} (see, e.g., \cite[Definition~4.1]{Vial83}) for some $\alpha\in \RR$ if $\forall x,y\in\dom f$, $\forall\kappa\in \left]0,1\right[$,
\begin{equation}
\label{e:alpha-cvx}
f((1-\kappa) x+\kappa y) +\frac{\alpha}{2}\kappa(1-\kappa)\|x-y\|^2\leq (1-\kappa)f(x)+\kappa f(y).
\end{equation}
We say that $f$ is \emph{convex} if $\alpha =0$, \emph{strongly convex} if $\alpha >0$, and \emph{weakly convex} if $\alpha <0$. It is worthwhile noting that \eqref{e:alpha-cvx} is equivalent to
\begin{equation}
f((1-\kappa) x+\kappa y) -\frac{\alpha}{2}\|(1-\kappa) x+\kappa y\|^2\leq (1-\kappa)\Big(f(x) -\frac{\alpha}{2}\|x\|^2\Big) +\kappa \Big(f(y) -\frac{\alpha}{2}\|y\|^2\Big)
\end{equation}
due to \eqref{e:identity}. Thus, 
\begin{equation}\label{e:alphcvx-cvx}
\text{$f$ is $\alpha$-convex}\ \iff\  \text{$f-\frac{\alpha}{2}\|\cdot\|^2$ is convex}.
\end{equation}

In this section, we focus on an important application of the adaptive DR algorithm to the $(\alpha,\beta)$-convex minimization problem, which can be stated as
\begin{equation}
\label{e:minsum}
\min_{x\in X} \{f(x)+g(x)\}
\end{equation}
where $f$ and $g$ are respectively $\alpha$- and $\beta$-convex functions.
To formulate the adaptive DR algorithm for \eqref{e:minsum}, we also recall that the \emph{proximity operator} of a proper function $f\colon X\to \RX$ with parameter $\gamma\in \RPP$ is the mapping $\prox_{\gamma f}\colon X\rightrightarrows X$ defined by
\begin{equation}
\label{e:prox}
\forall x\in X, \quad \prox_{\gamma f}(x) :=\argmin_{z\in X}\left(f(z)+\frac{1}{2\gamma}\|z-x\|^2\right).
\end{equation}
Now let $(\gamma,\delta,\lambda,\mu)\in \RPP^4$ and $\kappa\in \left]0,1\right[$. The adaptive DR algorithm for \eqref{e:minsum} is given by
\begin{equation}
\forall\nnn,\quad
x_{n+1}\in Tx_n, 
\end{equation}
where
\begin{subequations}\label{e:T_prox}
\begin{align}
T &:=(1-\kappa)\Id+\kappa R_2R_1,\\
R_1&:=(1-\lambda)\Id+\lambda\prox_{\gamma f},\\
R_2&:=(1-\mu)\Id+\mu\prox_{\delta g}.
\end{align}
\end{subequations}
Next, we will collect necessary concepts from convex analysis and establish that the adaptive DR operators in \eqref{e:T_prox} is indeed a special case of \eqref{e:JRT} when applied to subdifferential operators. In particular, we will show in Lemma~\ref{l:prox} that for $\alpha$-convex functions, proximity operators are exactly resolvents of \emph{Fr\'echet subdifferentials}. We note that this connection is well known for convex functions (see, e.g., \cite[Proposition~16.44]{BC17}), where the Fr\'echet subdifferential reduces to the \emph{classical convex subdifferential}.

Recall that the Fr\'echet subdifferential of $f$ at $x$ is defined by 
\begin{equation}
\label{e:Frechet}
\widehat{\partial}f(x) :=\Menge{u\in X}{\liminf_{z\to x}\frac{f(z)-f(x)-\scal{u}{z-x}}{\|z-x\|}\geq 0}.
\end{equation}
It is known that if $f$ is differentiable at $x$, then $\widehat{\partial}f(x) =\{\nabla f(x)\}$. When $f$ is a proper convex function, the Fr\'echet subdifferential coincides with the classical convex subdifferential (see, e.g., \cite[Theorem~1.93]{Mor06}), i.e.,
\begin{equation}
\widehat{\partial}f(x) =\partial f(x) :=\menge{u\in X}{\forall z\in X,\ f(z)-f(x)\geq \scal{u}{z-x}}.
\end{equation}

\begin{fact}[subdifferential sum rule]
\label{f:sumrule}
Let $f\colon X\to \left]-\infty,+\infty\right]$ be proper and $\varphi\colon X\to \left]-\infty,+\infty\right]$ be differentiable at $x\in\dom f$. Then
\begin{equation}
\widehat{\partial}(f+\varphi)(x)=\widehat{\partial}f(x)+\nabla\varphi(x).
\end{equation}
\end{fact}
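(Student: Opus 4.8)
The plan is to establish the set equality by a direct comparison of the defining difference quotients, reducing the assertion about $f+\varphi$ to one about $f$ with a shifted candidate subgradient. First I would note that, since $\varphi$ is differentiable at $x$, it is in particular finite at $x$, and because $x\in\dom f$ the value $(f+\varphi)(x)=f(x)+\varphi(x)$ is finite; thus $\widehat{\partial}(f+\varphi)(x)$ is well defined via \eqref{e:Frechet}.

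The key step is the algebraic decomposition of the difference quotient: for every $u\in X$ and every $z\neq x$,
\begin{align}
\frac{(f+\varphi)(z)-(f+\varphi)(x)-\scal{u}{z-x}}{\|z-x\|}
&=\frac{f(z)-f(x)-\scal{u-\nabla\varphi(x)}{z-x}}{\|z-x\|} \notag\\
&\quad +\frac{\varphi(z)-\varphi(x)-\scal{\nabla\varphi(x)}{z-x}}{\|z-x\|}.
\end{align}
By the definition of Fr\'echet differentiability, the second summand converges to $0$ as $z\to x$.

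Next I would invoke the elementary fact that adding a term with limit $0$ does not affect the lower limit: if $g(z)\to 0$ as $z\to x$, then $\liminf_{z\to x}\bigl(h(z)+g(z)\bigr)=\liminf_{z\to x}h(z)$. Applied to the decomposition above, this yields
\begin{equation}
\liminf_{z\to x}\frac{(f+\varphi)(z)-(f+\varphi)(x)-\scal{u}{z-x}}{\|z-x\|}
=\liminf_{z\to x}\frac{f(z)-f(x)-\scal{u-\nabla\varphi(x)}{z-x}}{\|z-x\|},
\end{equation}
so the left-hand limit is nonnegative precisely when the right-hand one is. By \eqref{e:Frechet} this means $u\in\widehat{\partial}(f+\varphi)(x)$ if and only if $u-\nabla\varphi(x)\in\widehat{\partial}f(x)$, i.e.\ $u\in\widehat{\partial}f(x)+\nabla\varphi(x)$, which is exactly the claimed equality.

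I do not anticipate a genuine obstacle here; the only point deserving care is the $\liminf$ identity, which is valid because $\nabla\varphi(x)$ is a fixed vector and the $\varphi$-difference quotient possesses an honest limit (namely $0$) rather than merely a lower limit, so it passes additively through the $\liminf$.
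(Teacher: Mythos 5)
Your proof is correct, but it differs from the paper's in an essential way: the paper offers no argument at all and simply defers to \cite[Proposition~1.107(i)]{Mor06}. Your route is a self-contained verification straight from definition \eqref{e:Frechet}, and its three ingredients all check out: the splitting of the difference quotient is an exact identity (valid in the extended reals, since differentiability of $\varphi$ at $x$ forces $\varphi$ to be finite on a neighborhood of $x$, so no $\infty-\infty$ can occur when $f(z)=+\infty$); Fr\'echet differentiability says precisely that the $\varphi$-quotient tends to $0$; and the $\liminf$ identity holds because one of the two summands has an honest limit, the point you rightly flag as the only delicate step. Since membership of $u$ in $\widehat{\partial}(f+\varphi)(x)$ is thereby equivalent to membership of $u-\nabla\varphi(x)$ in $\widehat{\partial}f(x)$, you obtain both inclusions simultaneously rather than arguing $\subseteq$ and $\supseteq$ separately. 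As for what each approach buys: yours makes the statement self-contained and exposes how elementary it is---nothing beyond the definition is used, not even lower semicontinuity of $f$ or any convexity; the paper's citation is shorter and situates the fact inside Mordukhovich's general subdifferential calculus, a reasonable choice given that the fact is standard and peripheral to the paper's main development. It would be worth stating explicitly, as a half-sentence, that differentiability at $x$ entails finiteness of $\varphi$ near $x$, since that is what legitimizes the extended-real arithmetic in your decomposition.
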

\begin{proof}
This follows from \cite[Proposition~1.107(i)]{Mor06}.
\end{proof}

\begin{lemma}[proximity operators of $\alpha$-convex functions]
\label{l:prox}
Let $f\colon X\to \left]-\infty,+\infty\right]$ be proper, lower semicontinuous, and $\alpha$-convex. Also let $\gamma\in \RPP$ be such that $1+\gamma\alpha >0$. Then the following hold:
\begin{enumerate}
\item\label{l:prox_diff} 
$\widehat{\partial}f$ is maximally $\alpha$-monotone.
\item\label{l:prox_equal} 
$\prox_{\gamma f} =J_{\gamma\widehat{\partial}f}$ is single-valued and has full domain.
\end{enumerate}
\end{lemma}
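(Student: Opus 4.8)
The plan is to prove the two claims of Lemma~\ref{l:prox} in order, using the key structural fact \eqref{e:alphcvx-cvx} that $f$ is $\alpha$-convex if and only if $\varphi := f - \frac{\alpha}{2}\|\cdot\|^2$ is convex. This reduction to the convex case, together with the subdifferential sum rule (Fact~\ref{f:sumrule}) and the monotonicity transfer lemma (Lemma~\ref{l:transfer}), should let me bootstrap everything from the classical convex theory.

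For part~\ref{l:prox_diff}, I would first note that $\varphi = f - \frac{\alpha}{2}\|\cdot\|^2$ is proper, lower semicontinuous, and convex, so its convex subdifferential $\partial\varphi$ is maximally monotone by the classical Rockafellar theorem (available, e.g., as \cite[Theorem~20.25]{BC17}). Since $\frac{\alpha}{2}\|\cdot\|^2$ is differentiable with gradient $\alpha\Id$, Fact~\ref{f:sumrule} gives
\begin{equation}
\widehat{\partial}f = \widehat{\partial}\Big(\varphi + \tfrac{\alpha}{2}\|\cdot\|^2\Big) = \widehat{\partial}\varphi + \alpha\Id = \partial\varphi + \alpha\Id,
\end{equation}
where the last equality uses that $\widehat{\partial}\varphi = \partial\varphi$ for the convex function $\varphi$. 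Thus $\widehat{\partial}f - \alpha\Id = \partial\varphi$ is maximally monotone, and Lemma~\ref{l:transfer}\ref{l:transfer_maxmono} immediately upgrades this to maximal $\alpha$-monotonicity of $\widehat{\partial}f$.

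For part~\ref{l:prox_equal}, once \ref{l:prox_diff} is established, single-valuedness and full domain of $J_{\gamma\widehat{\partial}f}$ follow directly from Proposition~\ref{p:resol} (using the hypothesis $1+\gamma\alpha > 0$), so the real content is the identity $\prox_{\gamma f} = J_{\gamma\widehat{\partial}f}$. I would establish this by a direct variational argument: $p \in \prox_{\gamma f}(x)$ means $p$ minimizes $z \mapsto f(z) + \frac{1}{2\gamma}\|z-x\|^2$. Because $1+\gamma\alpha > 0$, the objective is $(\frac{1}{\gamma}+\alpha)$-convex with a positive modulus, hence coercive and strictly convex, so the minimizer exists and is unique. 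Writing the Fermat rule $0 \in \widehat{\partial}\big(f + \frac{1}{2\gamma}\|\cdot - x\|^2\big)(p)$ and applying Fact~\ref{f:sumrule} to the differentiable quadratic term yields $0 \in \widehat{\partial}f(p) + \frac{1}{\gamma}(p-x)$, which rearranges to $x \in (\Id + \gamma\widehat{\partial}f)(p)$, i.e., $p = J_{\gamma\widehat{\partial}f}(x)$.

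The main obstacle I anticipate is justifying the Fermat rule for the \emph{Fr\'echet} subdifferential in the nonconvex ($\alpha < 0$) regime: I must confirm that a global minimizer $p$ of the proximal objective satisfies $0 \in \widehat{\partial}(\cdot)(p)$, which is the standard necessary condition for Fr\'echet subdifferentials, and that the sum rule of Fact~\ref{f:sumrule} applies with the smooth quadratic perturbation. A cleaner route that sidesteps subtleties is to again pass through $\varphi$: since $f + \frac{1}{2\gamma}\|\cdot - x\|^2 = \varphi + \big(\frac{1}{2\gamma}\|\cdot - x\|^2 + \frac{\alpha}{2}\|\cdot\|^2\big)$ is a genuinely convex function (by $1+\gamma\alpha > 0$), its minimizers are characterized by the \emph{convex} Fermat rule, and $\widehat{\partial}f = \partial\varphi + \alpha\Id$ from part~\ref{l:prox_diff} lets me translate back. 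This way every subdifferential manipulation happens for convex functions, where the calculus is unambiguous, and the $\alpha$-convex statement follows purely by the reduction.
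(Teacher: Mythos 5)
Your proposal is correct and takes essentially the same route as the paper: part \ref{l:prox_diff} is verbatim the paper's argument (convexity of $f-\frac{\alpha}{2}\|\cdot\|^2$, maximal monotonicity of its convex subdifferential, Fact~\ref{f:sumrule}, and Lemma~\ref{l:transfer}\ref{l:transfer_maxmono}), and your ``cleaner route'' for part \ref{l:prox_equal} is exactly the paper's proof---establish convexity of the proximal objective (the paper does this by completing the square, you by the equivalent decomposition into $f-\frac{\alpha}{2}\|\cdot\|^2$ plus a smooth convex quadratic) and then apply the convex Fermat rule together with Fact~\ref{f:sumrule}. Your primary route for \ref{l:prox_equal} also closes correctly, since the necessary-condition Fr\'echet Fermat rule, existence and uniqueness of the minimizer from strong convexity, and single-valuedness of $J_{\gamma\widehat{\partial}f}$ via Proposition~\ref{p:resol} together pin down the identity, so the obstacle you flag is not a genuine gap.
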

\begin{proof}
According to \eqref{e:alphcvx-cvx}, the function $h :=f-\frac{\alpha}{2}\|\cdot\|^2$ is convex.

\ref{l:prox_diff}: By Fact~\ref{f:sumrule},
\begin{equation}
\widehat{\partial}f =\widehat{\partial}\big(h+\frac{\alpha}{2}\|\cdot\|^2\big) =\widehat{\partial}h+\alpha\Id.
\end{equation}
Since $h$ is proper lower semicontinuous convex, we learn from \cite[Theorem~21.2]{BC17} that $\widehat{\partial}h$ is maximally monotone, which implies that $\widehat{\partial}f$ is maximally $\alpha$-monotone due to Lemma~\ref{l:transfer}\ref{l:transfer_maxmono}.

\ref{l:prox_equal}: By \ref{l:prox_diff} and Proposition~\ref{p:resol}, $J_{\gamma\widehat{\partial}f}$ is single-valued and has full domain. Let $x\in X$ and set $\varphi :=f +\frac{1}{2\gamma}\|\cdot-x\|^2$. Then
\begin{subequations}
\begin{align}
\varphi(z) &=f(z) +\frac{1}{2\gamma}\|z-x\|^2\\ 
&=\Big(f(z)-\frac{\alpha}{2}\|z\|^2 \Big) +\frac{1+\gamma\alpha}{2\gamma}\left\|z-\frac{1}{1+\gamma\alpha}x\right\|^2 -\frac{\alpha}{2(1+\gamma\alpha)}\|x\|^2.
\end{align}
\end{subequations}
Since $h=f-\frac{\alpha}{2}\|\cdot\|^2$ is convex, so is $\varphi$. Using \eqref{e:prox} and Fact~\ref{f:sumrule}, we have
\begin{subequations}
\begin{align}
p\in \prox_{\gamma f}(x) &\iff 0\in \partial\varphi(p) =\widehat{\partial}f(p) +\frac{1}{\gamma}(p-x)\\
&\iff x\in (\Id+\gamma\widehat{\partial}f)(p)\\
&\iff p\in J_{\gamma\widehat{\partial}f}(x),
\end{align}
\end{subequations}
and the conclusion follows.
\end{proof}

\begin{lemma}
\label{l:chainrule}
Let $f\colon X\to \left]-\infty,+\infty\right]$ and $g\colon X\to \left]-\infty,+\infty\right]$ be respectively $\alpha$- and $\beta$-convex. Then  $f+g$ is $(\alpha+\beta)$-convex. Moreover, if additionally $\alpha+\beta\geq 0$, then $\zer(\widehat{\partial}f+\widehat{\partial}g)\subseteq \argmin(f+g)$.
\end{lemma}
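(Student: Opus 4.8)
The plan is to prove the two assertions in turn, using the characterization \eqref{e:alphcvx-cvx} as the principal tool. For the first assertion I would simply note that
\[
(f+g)-\frac{\alpha+\beta}{2}\|\cdot\|^2
=\Big(f-\frac{\alpha}{2}\|\cdot\|^2\Big)+\Big(g-\frac{\beta}{2}\|\cdot\|^2\Big),
\]
and that each summand on the right is convex by \eqref{e:alphcvx-cvx}; since a sum of convex functions is convex, \eqref{e:alphcvx-cvx} applied in the reverse direction shows that $f+g$ is $(\alpha+\beta)$-convex. (Alternatively, one may add the defining inequality \eqref{e:alpha-cvx} for $f$ to that for $g$ and read off the constant $\alpha+\beta$.)

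For the second assertion, the crucial preliminary step is to convert the purely local information carried by the Fréchet subdifferential \eqref{e:Frechet} into a \emph{global} quadratic lower bound. Setting $h:=f-\frac{\alpha}{2}\|\cdot\|^2$, which is convex by \eqref{e:alphcvx-cvx}, I would use the identity $\widehat{\partial}f=\partial h+\alpha\Id$ already derived in the proof of Lemma~\ref{l:prox}\ref{l:prox_diff} (through Fact~\ref{f:sumrule} and the coincidence of the Fréchet and convex subdifferentials for convex functions). Thus $u\in\widehat{\partial}f(x)$ is equivalent to $u-\alpha x\in\partial h(x)$, and the ordinary convex subgradient inequality for $h$, after substituting $h=f-\frac{\alpha}{2}\|\cdot\|^2$ and rearranging into a perfect square, yields
\[
\forall z\in X,\quad f(z)\geq f(x)+\scal{u}{z-x}+\frac{\alpha}{2}\|z-x\|^2.
\]
The analogous statement holds for $g$ with constant $\beta$ and subgradient $v\in\widehat{\partial}g(x)$.

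With these two global inequalities at hand, the conclusion is immediate. Let $x\in\zer(\widehat{\partial}f+\widehat{\partial}g)$, so that there exist $u\in\widehat{\partial}f(x)$ and $v\in\widehat{\partial}g(x)$ with $u+v=0$. Adding the inequality above for $f$ to its counterpart for $g$, the linear terms combine into $\scal{u+v}{z-x}=0$, leaving
\[
\forall z\in X,\quad (f+g)(z)\geq(f+g)(x)+\frac{\alpha+\beta}{2}\|z-x\|^2\geq(f+g)(x),
\]
where the final inequality uses $\alpha+\beta\geq 0$. Hence $x\in\argmin(f+g)$.

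I expect the main obstacle to be precisely the local-to-global upgrade in the second paragraph: the Fréchet subdifferential \eqref{e:Frechet} only encodes a first-order $\liminf$ condition as $z\to x$, so on its own it gives no information away from $x$. The reduction to the convex function $h$ is what promotes the estimate to a global quadratic lower bound, and it is also where $\alpha$-convexity (rather than mere differentiability) is indispensable; once that bound is secured, the orthogonality $u+v=0$ together with $\alpha+\beta\geq 0$ finishes the argument without further effort.
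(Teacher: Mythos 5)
Your proof is correct, and while your first assertion is handled essentially as in the paper (decomposing $f+g$ into its two convex parts plus $\frac{\alpha+\beta}{2}\|\cdot\|^2$ and invoking \eqref{e:alphcvx-cvx}), your argument for the second assertion takes a genuinely different route. The paper stays at the level of subdifferential calculus: since $\alpha+\beta\geq 0$ makes $f+g$ convex, it writes $0\in\widehat{\partial}f(x)+\widehat{\partial}g(x)\subseteq\widehat{\partial}(f+g)(x)=\partial(f+g)(x)$, where the inclusion is the elementary (superadditivity of $\liminf$) direction of the Fr\'echet sum rule and the equality is the coincidence of the Fr\'echet and convex subdifferentials for convex functions; then $x\in\argmin(f+g)$ is immediate from $0\in\partial(f+g)(x)$. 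You instead promote each Fr\'echet subgradient to a \emph{global} quadratic minorant, $f(z)\geq f(x)+\scal{u}{z-x}+\frac{\alpha}{2}\|z-x\|^2$ (via $h:=f-\frac{\alpha}{2}\|\cdot\|^2$, Fact~\ref{f:sumrule}, and the convex subgradient inequality for $h$), and add the two minorants using $u+v=0$. The paper's route is shorter; yours is more quantitative, since it delivers the growth estimate $(f+g)(z)\geq(f+g)(x)+\frac{\alpha+\beta}{2}\|z-x\|^2$, which in particular yields uniqueness of the minimizer whenever $\alpha+\beta>0$ (a fact the paper obtains separately, through the operator framework, in Theorem~\ref{t:f-cvg}), and it makes explicit why $\alpha$-convexity, rather than the purely local $\liminf$ condition \eqref{e:Frechet}, is what lets pointwise subgradient information control $f$ globally.
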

\begin{proof}
We write
\begin{equation}
f+g=\Big(f-\frac{\alpha}{2}\|\cdot\|^2\Big)
+\Big(g-\frac{\beta}{2}\|\cdot\|^2\Big)
+\frac{\alpha+\beta}{2}\|\cdot\|^2,
\end{equation}
which together with \eqref{e:alphcvx-cvx} implies the $(\alpha+\beta)$-convexity of $f+g$. Next, let $x\in\zer(\widehat\partial f+\widehat\partial g)$. If $\alpha+\beta\geq 0$, then $f+g$ is convex, and we have that
\begin{equation}
0\in \widehat{\partial}f(x)+\widehat{\partial}g(x)\subseteq \widehat{\partial}(f+g)(x) =\partial(f+g)(x),
\end{equation}
so $x\in \argmin(f+g)$. The proof is complete.
\end{proof}

\begin{theorem}[adaptive DR algorithm for $(\alpha,\beta)$-convex minimization]
\label{t:f-cvg}
Let $f\colon X\to\left]-\infty,+\infty\right]$ 
and $g\colon X\to \left]-\infty,+\infty\right]$ 
be proper and lower semicontinuous. Suppose also that $f$ and $g$ are respectively $\alpha$- and $\beta$-convex with $\zer(\widehat\partial f+\widehat\partial g)\neq \varnothing$, and that one of the following holds:
\begin{enumerate}
\item\label{t:f-cvg_adapt}
(Adaptive DR algorithm)
$\alpha+\beta\geq 0$ and $(\gamma,\delta,\lambda,\mu)\in \RPP^2\times \left]1,+\infty\right[^2$ satisfies \eqref{e:parameters}.
\item\label{t:f-cvg_DR} 
(Classical DR algorithm) $\lambda =\mu =2$, $\gamma=\delta\in \RPP$, and 
\begin{subequations}
\begin{alignat}{2}
&\text{either}\quad &&\alpha =\beta =0;\\
&\text{or}\quad &&\alpha+\beta >0 \text{~and~} 1+\gamma\frac{\alpha\beta}{\alpha+\beta} >\kappa. 
\end{alignat}
\end{subequations}
\end{enumerate} 
Then every DR sequence $(x_n)_\nnn$ generated by $T$ converges weakly to a point $\overline{x}\in \Fix T$ with $\prox_{\gamma f}(\overline{x})\in \zer(\widehat{\partial}f+\widehat{\partial}g)\subseteq \argmin(f+g)$ and the rate of asymptotic regularity of $T$ is $o(1/\sqrt{n})$. Moreover, if $\alpha+\beta >0$, then $(\prox_{\gamma f}(x_n))_\nnn$ and $(\prox_{\delta g}(R_1 x_n))_\nnn$ converge strongly to $\prox_{\gamma f}(\overline{x})$ and $\argmin(f+g) =\{\prox_{\gamma f}(\overline{x})\}$.
\end{theorem}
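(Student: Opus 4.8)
The plan is to recognize Theorem~\ref{t:f-cvg} as a specialization of the operator-theoretic result Theorem~\ref{t:DRcvg} to the subdifferential setting, so that nearly all of the work is already done by the machinery of Sections~\ref{s:aDR} and the lemmas just established. Setting $A:=\widehat\partial f$ and $B:=\widehat\partial g$, I would first record the translation dictionary supplied by Lemma~\ref{l:prox}: since $f$ and $g$ are proper, lower semicontinuous, and $\alpha$-, $\beta$-convex, part~\ref{l:prox_diff} gives that $A$ and $B$ are maximally $\alpha$- and $\beta$-monotone, while part~\ref{l:prox_equal} gives $\prox_{\gamma f}=J_{\gamma A}=J_1$ and $\prox_{\delta g}=J_{\delta B}=J_2$ (both single-valued with full domain), provided $1+\gamma\alpha>0$ and $1+\delta\beta>0$. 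Consequently the operator $T$ defined in \eqref{e:T_prox} coincides \emph{exactly} with the adaptive DR operator \eqref{e:JRT} built from $A$ and $B$.

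To invoke Lemma~\ref{l:prox} I must first verify the two strict positivity conditions $1+\gamma\alpha>0$ and $1+\delta\beta>0$ in each hypothesis regime. In case~\ref{t:f-cvg_adapt} this is immediate from Lemma~\ref{l:domT}, since the parameters satisfy \eqref{e:parameters}; in case~\ref{t:f-cvg_DR} it is the same elementary computation already carried out at the start of the proof of Theorem~\ref{t:DRcvg} (using $\gamma=\delta$, $\lambda=\mu=2$, and the hypothesis on $\alpha,\beta,\kappa$). Having this, and noting that $\zer(A+B)=\zer(\widehat\partial f+\widehat\partial g)\neq\varnothing$ by assumption, I observe that cases~\ref{t:f-cvg_adapt} and~\ref{t:f-cvg_DR} match verbatim the hypotheses~\ref{t:DRcvg_adapt} and~\ref{t:DRcvg_DR} of Theorem~\ref{t:DRcvg}. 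Applying that theorem then yields weak convergence of every DR sequence $(x_n)_\nnn$ to some $\overline{x}\in\Fix T$ with $J_1\overline{x}=\prox_{\gamma f}(\overline{x})\in\zer(A+B)=\zer(\widehat\partial f+\widehat\partial g)$; the $o(1/\sqrt n)$ rate of asymptotic regularity; and, when $\alpha+\beta>0$, strong convergence of the shadow sequences $(J_1x_n)_\nnn=(\prox_{\gamma f}(x_n))_\nnn$ and $(J_2R_1x_n)_\nnn=(\prox_{\delta g}(R_1x_n))_\nnn$ to $\prox_{\gamma f}(\overline{x})$, together with $\zer(\widehat\partial f+\widehat\partial g)=\{\prox_{\gamma f}(\overline{x})\}$.

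The remaining point is to pass from the zero set to the minimizers. Since $\alpha+\beta\geq 0$ in all cases, Lemma~\ref{l:chainrule} gives the inclusion $\zer(\widehat\partial f+\widehat\partial g)\subseteq\argmin(f+g)$, which completes the first assertion once combined with $\prox_{\gamma f}(\overline{x})\in\zer(\widehat\partial f+\widehat\partial g)$. For the final strong-convergence conclusion I would argue as follows: when $\alpha+\beta>0$, Lemma~\ref{l:chainrule} shows $f+g$ is $(\alpha+\beta)$-convex, hence strongly and in particular strictly convex, so $\argmin(f+g)$ contains at most one point; since it already contains the nonempty set $\zer(\widehat\partial f+\widehat\partial g)=\{\prox_{\gamma f}(\overline{x})\}$, the two sets coincide and $\argmin(f+g)=\{\prox_{\gamma f}(\overline{x})\}$.

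I expect no serious obstacle, as the statement is essentially a dictionary-translation corollary. The one place that warrants care is this last equality: one is tempted to deduce $\argmin(f+g)\subseteq\zer(\widehat\partial f+\widehat\partial g)$ via a reverse sum rule, but such an inclusion can fail for Fr\'echet subdifferentials of merely weakly convex data. The clean route instead exploits strict convexity in the $\alpha+\beta>0$ regime to force $\argmin(f+g)$ to be a singleton and then sandwiches it against the already-identified nonempty zero set, avoiding any reverse subdifferential calculus.
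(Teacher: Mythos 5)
Your proposal is correct and takes essentially the same route as the paper, whose entire proof reads: ``In view of Lemmas~\ref{l:prox} and \ref{l:chainrule}, we apply Theorem~\ref{t:DRcvg} to $A=\widehat{\partial}f$ and $B=\widehat{\partial}g$'' --- precisely your dictionary translation via Lemma~\ref{l:prox} (maximal $\alpha$-, $\beta$-monotonicity and $\prox_{\gamma f}=J_{\gamma\widehat{\partial}f}$) combined with the inclusion $\zer(\widehat{\partial}f+\widehat{\partial}g)\subseteq\argmin(f+g)$ from Lemma~\ref{l:chainrule}. You are in fact somewhat more careful than the paper, which leaves implicit both the verification of $\min\{1+\gamma\alpha,1+\delta\beta\}>0$ and the strict-convexity argument forcing $\argmin(f+g)$ to be a singleton when $\alpha+\beta>0$; your handling of the latter (avoiding any reverse subdifferential sum rule) is exactly the right way to close that gap.
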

\begin{proof}
In view of Lemmas~\ref{l:prox} and \ref{l:chainrule},
we apply Theorem~\ref{t:DRcvg} to $A =\widehat{\partial}f$ and $B =\widehat{\partial}g$.
\end{proof}

\begin{remark}[strongly and weakly convex minimization]
In \cite[Theorems~4.4 and 4.6]{GHY17}, the authors proved the convergence of the classical DR algorithm for problem \eqref{e:minsum} when $f$ and $g$ are respectively $\alpha$- and $\beta$-convex functions in a Euclidean space with either $\alpha >-\beta >0$ or ${\beta >-\alpha >0}$. Roughly speaking, these results require that the strong convexity {\em strictly outweighs} the weak counterpart. 

In contrast, our approach (Theorem~\ref{t:f-cvg}) for this problem assumes $\alpha+\beta\geq 0$, which means the weak convexity only needs to be {\em neutralized}. Under this assumption, we {\em adapt} the parameters so that the convergence is guaranteed. Let us recall that when both functions in \eqref{e:minsum} are convex, we may just assume there is neither a strong nor a weak component, i.e., $\alpha=\beta=0$, and obtain the convergence for the classical DR algorithm.

Recently, for the $\alpha+\beta=0$ case, the classical DR algorithm has been considered in \cite{GH18}, where the convergence requires that one function is strongly convex with Lipschitz continuous gradient. We note that in this case, the convergence of the adaptive DR algorithm is established in Theorem~\ref{t:f-cvg}\ref{t:f-cvg_adapt} without any differentiability assumption on the functions.
\end{remark}

Finally, we present a linear convergence result under Lipschitz assumption on the gradient of $f$. For other linear convergence results of related splitting methods in the context of structured minimization problems, we refer interested readers to \cite{DavY17,DenY16} and the references therein.

\begin{theorem}[linear convergence when $\nabla f$ is Lipschitz continuous]
Let $f\colon X\to \RR$ be a differentiable function whose gradient $\nabla f$ is Lipschitz continuous with constant $\ell$, and let $g\colon X\to \RX$ be a proper lower semicontinuous function. Suppose that either
\begin{enumerate}
\item $f$ is $\alpha$-convex,
$g$ is $\beta$-convex, and $\alpha+\beta>0$; or
\item $g$ is $\beta$-convex with $\beta>\ell$, and $\alpha:=-\ell$.
\end{enumerate}
Suppose also that $\zer(\nabla f+\widehat{\partial} g)\neq\varnothing$ and that $(\gamma,\delta,\lambda,\mu)\in\RPP^2\times\left]1,+\infty\right[^2$ satisfies \eqref{e:parameters}. Then the adaptive DR operator $T$ is Lipschitz continuous with constant less than $1$. Consequently, every DR sequence $(x_n)_\nnn$ generated by $T$ converges strongly to the unique fixed point $\overline{x}$ of $T$ with linear rate.
\end{theorem}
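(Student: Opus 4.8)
The plan is to reduce this statement to Theorem~\ref{t:linear} by setting $A := \nabla f$ and $B := \widehat{\partial} g$ and checking that every hypothesis transfers. First I would establish the operator-theoretic translation: since $g$ is proper, lower semicontinuous, and $\beta$-convex, Lemma~\ref{l:prox}\ref{l:prox_diff} shows $B = \widehat{\partial} g$ is maximally $\beta$-monotone, and Lemma~\ref{l:prox}\ref{l:prox_equal} gives $\prox_{\gamma f} = J_{\gamma A}$ and $\prox_{\delta g} = J_{\delta B}$. Consequently the operator $T$ assembled from proximity operators in \eqref{e:T_prox} is exactly the adaptive DR operator of \eqref{e:JRT} for the pair $(A, B)$, and $\zer(\nabla f + \widehat{\partial} g) = \zer(A + B) \neq \varnothing$ by assumption.

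Next I would verify the two alternatives of Theorem~\ref{t:linear}. Because $f$ is differentiable, $\widehat{\partial} f = \{\nabla f\}$ is single-valued, so the Lipschitz hypothesis on $\nabla f$ places $A$ in the Lipschitz slot of that theorem. In case (i), $f$ is $\alpha$-convex, whence Lemma~\ref{l:prox}\ref{l:prox_diff} makes $A = \nabla f$ $\alpha$-monotone; together with Lipschitz continuity of $A$, maximal $\beta$-monotonicity of $B$, and $\alpha + \beta > 0$, this is precisely Theorem~\ref{t:linear}\ref{t:linear_full}. In case (ii), $A = \nabla f$ is merely Lipschitz with constant $\ell$, hence automatically $(-\ell)$-monotone by Remark~\ref{r:Lip_mono}, matching the choice $\alpha := -\ell$; with $B$ maximally $\beta$-monotone and $\beta > \ell$, this is Theorem~\ref{t:linear}\ref{t:linear_reduce}. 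Since $(\gamma, \delta, \lambda, \mu)$ satisfies \eqref{e:parameters} by hypothesis, Theorem~\ref{t:linear} now yields a Lipschitz constant $\rho < 1$ for $T$ together with strong linear convergence of every DR sequence to the unique fixed point.

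No step is genuinely hard, as the argument is purely a packaging of earlier results: Lemma~\ref{l:prox} bridges proximity operators and resolvents of Fr\'echet subdifferentials, while Theorem~\ref{t:linear} supplies the contraction estimate. The only point meriting care is the observation that differentiability collapses $\widehat{\partial} f$ to $\nabla f$, so that the hypothesis on $\nabla f$ correctly feeds the Lipschitz operator into Theorem~\ref{t:linear}; with this identification in place, the conclusion is immediate.
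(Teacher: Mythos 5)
Your proposal is correct and takes essentially the same route as the paper, whose entire proof is the one-line reduction you describe: apply Theorem~\ref{t:linear} with $A=\widehat{\partial}f=\nabla f$ and $B=\widehat{\partial}g$. Your additional detail (using Lemma~\ref{l:prox} to identify the proximity operators with resolvents and matching each case of the theorem's hypotheses to Theorem~\ref{t:linear}\ref{t:linear_full} or \ref{t:linear_reduce}) simply makes explicit what the paper leaves implicit.
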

\begin{proof}
Apply Theorem~\ref{t:linear} with $A=\widehat{\partial}f=\nabla f$ and $B=\widehat{\partial} g$.
\end{proof}

\section{Conclusion}
\label{s:concl}

We have studied the adaptive DR algorithm for finding a zero of the sum of $\alpha$- and $\beta$-monotone operators. The adaptive parameters provide great flexibility for adjusting the DR algorithm so that the convergence is guaranteed. We have derived the rate of asymptotic regularity $o(1/\sqrt{n})$ for the adaptive DR operator. When the strong convexity strictly outweighs the weak one, we have further obtained the strong convergence of shadow sequences to the solution of the original problem. Global linear convergence is also achieved with a sharp rate in several important cases. Our new approach, on the one hand, generalizes previous works in the same direction and, on the other hand, unifies the convergence analysis of the DR algorithm under monotone-type assumptions.

\section*{Acknowledgement}
The authors are grateful to the associate editor and the two anonymous referees for their constructive comments and suggestions.
MND was partially supported by the Australian Research Council (ARC) Discovery Project DP160101537 and by the Priority Research Centre for Computer-Assisted Research Mathematics and its Applications (CARMA) at the University of Newcastle. HMP was partially supported by Autodesk, Inc via a gift made to the Department of Mathematical Sciences, University of Massachusetts Lowell.

\end{document}